\newtheorem{thm}{Theorem}
\newtheorem{conjecture}{Conjecture}
\newtheorem{lemma}[thm]{Lemma}
\newtheorem{prop}[thm]{Proposition}
\newtheorem{cor}[thm]{Corollary}
\newtheorem{remark}[thm]{Remark}
\newcommand{\R}{\mathbb{R}}
\newcommand{\Z}{\mathbb{Z}}
\newcommand{\C}{\mathbb{C}}
\newcommand{\dps}{\displaystyle}
\newcommand{\ii}{\infty}
\newcommand\1{{\ensuremath {\mathds 1} }}
\renewcommand\phi{\varphi}
\newcommand{\wto}{\rightharpoonup}
\newcommand{\cV}{\mathcal{V}}
\newcommand{\cK}{\mathcal{K}}
\newcommand{\cD}{\mathcal{D}}
\newcommand{\alp}{\boldsymbol{\alpha}}
\newcommand\pscal[1]{{\ensuremath{\left\langle #1 \right\rangle}}}
\newcommand{\norm}[1]{ \left\| #1 \right\|}
\renewcommand{\geq}{\geqslant}
\renewcommand{\leq}{\leqslant}
\renewcommand{\tilde}{\widetilde}
\newcommand{\be}{\begin{equation}}
\newcommand{\ee}{\end{equation}}
\newcommand{\bq}{\begin{equation}}
\newcommand{\eq}{\end{equation}}
\newcommand{\un}{{1\!\!1}_4}
\newcommand{\Frac}{\displaystyle \frac}
\newcommand{\Sup}{\displaystyle \sup}
\newcommand{\eps}{\varepsilon}
\newcommand{\nn}{\nonumber}
\newcommand{\Spec}{{\rm Sp}}
\title[Dirac-Coulomb operators with general charge distribution II]{Dirac-Coulomb operators with general charge distribution\\[0.2cm] II. The lowest eigenvalue}
\author[M.J. Esteban]{Maria J. Esteban}
\address{CEREMADE, CNRS, Université Paris-Dauphine, PSL Research University, Place de Lattre de Tassigny, 75016 Paris, France} 
\email{esteban@ceremade.dauphine.fr}
\author[M. Lewin]{Mathieu Lewin}
\address{CEREMADE, CNRS, Université Paris-Dauphine, PSL Research University, Place de Lattre de Tassigny, 75016 Paris, France} 
\email{mathieu.lewin@math.cnrs.fr}
\author[\'E. Séré]{\'Eric Séré}
\address{CEREMADE, Université Paris-Dauphine, PSL Research University, CNRS, Place de Lattre de Tassigny, 75016 Paris, France} 
\email{sere@ceremade.dauphine.fr}
\date{\today}
\DeclareRobustCommand{\SkipTocEntry}[5]{}
\begin{document}

\begin{abstract}
Consider the Coulomb potential $-\mu\ast|x|^{-1}$ generated by a non-negative finite measure $\mu$. It is well known that the lowest eigenvalue of the corresponding Schrödinger operator $-\Delta/2-\mu\ast|x|^{-1}$ is minimized, at fixed mass $\mu(\R^3)=\nu$, when $\mu$ is proportional to a delta. In this paper we investigate the conjecture that the same holds for the Dirac operator $-i\alp\cdot\nabla+\beta-\mu\ast|x|^{-1}$. In a previous work on the subject we proved that this operator is self-adjoint when $\mu$ has no atom of mass larger than or equal to 1, and that its eigenvalues are given by min-max formulas. Here we consider the critical mass $\nu_1$, below which the lowest eigenvalue does not dive into the lower continuum spectrum for all $\mu\geq0$ with $\mu(\R^3)<\nu_1$. We first show that $\nu_1$ is related to the best constant in a new scaling-invariant Hardy-type inequality. Our main result is that for all $0\leq\nu<\nu_1$, there exists an optimal measure $\mu\geq0$ giving the lowest possible eigenvalue at fixed mass $\mu(\R^3)=\nu$, which concentrates on a compact set of Lebesgue measure zero. The last property is shown using a new unique continuation principle for Dirac operators. The existence proof is based on the concentration-compactness principle. 

\medskip

\noindent\textbf{Keywords:} Dirac operators, min-max formulas, variational methods, eigenvalue optimization, spectral theory, concentration-compactness principle, unique continuation principle
\end{abstract}

\maketitle

\tableofcontents

\section{Introduction}

Consider a non-negative finite Borel measure $\mu$ on $\R^3$ and the corresponding linear Schrödinger operator
$$-\frac{\Delta}{2}-\mu\ast\frac{1}{|x|},$$
which describes a non-relativistic electron moving in the Coulomb potential generated by the charge distribution $\mu$. The lowest eigenvalue of this operator is given by the variational principle
\begin{multline}
\lambda_1\left(-\frac{\Delta}{2}-\mu\ast\frac{1}{|x|}\right)\\=\inf_{\substack{\phi\in H^1(\R^3)\\ \int_{\R^3}|\phi|^2=1}}\left\{\frac12\int_{\R^3}|\nabla\phi(x)|^2\,dx-\int_{\R^3}\left(\mu\ast\frac1{|\cdot|}\right)(x)|\phi(x)|^2\,dx\right\} 
\label{eq:Schrodinger_minimum}
\end{multline}
and from this we deduce immediately that it is a \emph{concave} function of $\mu$. Therefore, it is minimized, at fixed mass $\mu(\R^3)=\nu$, when $\mu$ is proportional to a delta and we have 
\begin{equation}
 \lambda_1\left(-\frac{\Delta}{2}-\mu\ast\frac{1}{|x|}\right)\geq \lambda_1\left(-\frac{\Delta}{2}-\frac{\mu(\R^3)}{|x|}\right)=-\frac{\mu(\R^3)^2}{2}
 \label{eq:estim_Schrodinger}
\end{equation}
for every $\mu\geq0$. The interpretation is that the lowest possible electronic energy is reached by taking the most concentrated charge distribution, at fixed total charge $\mu(\R^3)$. This result was generalized to many-body systems in~\cite{LieSim-78,HofOst-80,Lieb-82}. 

In the presence of a large charge distribution $\mu$, for instance generated by a heavy nucleus, the electron will naturally attain high velocities, of the order of the speed of light. Relativistic effects then become important and a proper description should involve the Dirac operator~\cite{Thaller,EstLewSer-08}. The above Schrödinger operator is then replaced by 
\begin{equation}
D_0-\mu\ast\frac1{|x|}=-i\sum_{j=1}^3\alpha_j\partial_{x_j}+\beta-\mu\ast\frac1{|x|}
\label{eq:general_form}
\end{equation}
where $\alpha_1,\alpha_2,\alpha_3,\beta$ are the Dirac matrices recalled below in~\eqref{eq:Dirac_matrices}. One important difference with the Schrödinger case is that the spectrum of $D_0$ is $\R\setminus(-1,1)$, hence is unbounded both from below and above. This is also the essential spectrum of $D_0-\mu\ast|x|^{-1}$. The eigenvalues in the gap $(-1,1)$ physically correspond to stationary states of the relativistic electrons. Therefore it seems natural to expect that the lowest eigenvalue in $(-1,1)$ will again be minimized for the Dirac measure $\mu(\R^3)\delta_0$, like in the Schrödinger case~\eqref{eq:estim_Schrodinger}. We would then have 
\begin{equation}
\boxed{\lambda_1\left(D_0-\mu\ast\frac1{|x|}\right)\overset{?}\geq\sqrt{1-\mu(\R^3)^2}}
\label{eq:conjecture_eigenvalue_intro}
\end{equation}
for every positive $\mu$ with $\mu(\R^3)\leq1$. The goal of this paper is to investigate Conjecture~\eqref{eq:conjecture_eigenvalue_intro}. When $\mu$ is the sum of two deltas, the conjecture is supported by numerical simulations~\cite{ArtSurIndPluSto-10,McConnell-13} and it was already mentioned by Klaus in~\cite[p.~478]{Klaus-80b} and by Briet-Hogreve in~\cite[Sec.~2.4]{BriHog-03}. To our knowledge, this is the first time that the problem is investigated for a general measure $\mu$. 

\medskip

Proving the inequality~\eqref{eq:conjecture_eigenvalue_intro} is much more difficult than for~\eqref{eq:estim_Schrodinger}. The first technical difficulty is that we have to clarify which operator we are considering. For a singular measure $\mu$ the operator $D_0-\mu\ast|x|^{-1}$ can have \emph{several self-adjoint extensions}, all with a different point spectrum. Even in the simple case $\mu=\nu\delta_0$, the Dirac-Coulomb operator $D_0-\nu|x|^{-1}$ has infinitely many self-adjoint extensions when $\nu>\sqrt{3}/2$~\cite{Thaller}. This is due to the fact that the Coulomb potential has a critical scaling with regard to the order-one differential operator $D_0$. This problem does not arise for the Schrödinger operator $-\Delta/2-\mu\ast|x|^{-1}$ which is essentially self-adjoint for every finite measure~$\mu$, by Hardy's inequality. 

The solution to this problem has been found many years ago (see~\cite{EstLewSer-19} for a complete list of references). It was proved that for $\nu<1$ the operator with truncated potential $D_0-\nu|x|^{-1}\1(|x|\geq r)$ converges in the norm-resolvent sense to a unique \emph{`distinguished'} self-adjoint extension. This extension is also the unique one for which the domain of the operator is included in the energy space $H^{1/2}(\R^3,\C^4)$. The lowest eigenvalue of this extension is equal to $\sqrt{1-\nu^2}$, which appears on the right side of~\eqref{eq:conjecture_eigenvalue_intro}. A similar (though more complicated) result holds at $\nu=1$~\cite{EstLewSer-19}. 

In a previous paper~\cite{EstLewSer-20a_ppt} we extended this result to an arbitrary measure~$\mu$ and proved the existence of a similar distinguished self-adjoint extension for $D_0-\mu\ast|x|^{-1}$ with domain in $H^{1/2}(\R^3,\C^4)$, under the sole assumptions that 
$$|\mu|(\R^3)<\ii\qquad\text{ and }\qquad |\mu(\{R\})|<1\qquad \text{for all $R\in\R^3$.}$$ 
In other words, no atom with a weight larger or equal than one is allowed in the measure $\mu$. We also gave in~\cite{EstLewSer-20a_ppt} a characterization of the domain using a method introduced in~\cite{EstLos-07, EstLos-08} and recently revisited in~\cite{EstLewSer-19,SchSolTok-20}. In this paper, we will always work with the so-defined distinguished extension. No prior knowledge of the technical construction in~\cite{EstLewSer-20a_ppt} will however be needed to follow our arguments. 

Next, we need to address what it means to be the `lowest' eigenvalue of an operator in a gap of its spectrum. If we look at the operator $D_0-t\mu\ast|x|^{-1}$ with $\mu\geq0$, this eigenvalue is defined unambiguously for small $t$, since the point spectrum is known to be located in a neighborhood of $1$, in the upper part of the spectral gap. It might however happen that when we increase $t$ this lowest eigenvalue decreases until it reaches $-1$ and finally dissolves in the lower essential spectrum. The second eigenvalue then suddenly becomes the lowest one in the gap. To overcome this difficulty and ensure a reasonable definition, we use a continuation argument and ask that for all $t\in[0,1]$ the lowest eigenvalue in $(-1,1)$ of $D_0-t\mu\ast|x|^{-1}$ never approaches $-1$. This naturally leads us to determine the critical mass $\nu_1$ for which we have $\lambda_1(D_0-\mu\ast|x|^{-1})>-1$ for all $\mu(\R^3)<\nu_1$, so that our conjecture~\eqref{eq:conjecture_eigenvalue_intro} makes sense at least for $\mu(\R^3)<\nu_1$. We will prove in Theorem~\ref{thm:critical_nu's} below that this critical number is also the \textbf{best constant in the Hardy-type inequality}
\begin{equation}
\boxed{  \int_{\R^3}\frac{|\sigma\cdot\nabla\phi|^2}{\mu\ast|x|^{-1}}\,dx\geq\frac{\nu_1^2}{\mu(\R^3)^2}\int_{\R^3}\left(\mu\ast\frac1{|x|}\right)|\phi|^2\,dx} 
 \label{eq:Hardy_intro}
\end{equation}
for every $\phi\in C^\ii_c(\R^3,\C^2)$ and every finite non-negative measure $\mu\geq0$, where $\sigma_1,\sigma_2,\sigma_3$ are here the $2\times2$ Pauli matrices recalled later in~\eqref{eq:Pauli_matrices}. To our knowledge, inequalities of the type~\eqref{eq:Hardy_intro} with two unknowns $(\phi,\mu)$ have never been studied in the literature. Using an inequality of Tix~\cite{Tix-98} we will see that 
$$0.9\simeq\frac2{\pi/2+2/\pi}\leq \nu_1\leq1.$$

The third difficulty is that the lowest Dirac eigenvalue in the gap $(-1,1)$ is not given by a minimum like in~\eqref{eq:Schrodinger_minimum}. In fact it is given by a min-max formula~\cite{GriSie-99, DolEstSer-00, SchSolTok-20,EstLewSer-20a_ppt}. Although this variational characterization can be used to show that $\lambda_1(D_0-\mu\ast|x|^{-1})$ is monotone in $\mu$, it is not obviously concave and this prevents us from using a simple argument such as~\eqref{eq:estim_Schrodinger}. The monotonicity implies already that Conjecture~\eqref{eq:conjecture_eigenvalue_intro} holds for a radially symmetric measure~$\mu$, because we then have the \emph{pointwise} bound
$$\left(\mu\ast\frac1{|\cdot|}\right)(x)\leq \frac{\mu(\R^3)}{|x|},$$
by Newton's theorem~\cite{LieLos-01}. Therefore, the problem reduces to knowing whether the radial symmetry can be broken.\footnote{In the same spirit, think of the second Dirichlet eigenvalue of a domain $\Omega\subset\R^d$ which is also given by a min-max formula. It is minimized at fixed $|\Omega|$ by two disjoint balls (Hong-Krahn-Szegö inequality) contrary to the first eigenvalue, which is minimized by a unique ball (Faber-Krahn inequality)~\cite{Henrot-06}.} 

\medskip

To make some progress on Conjecture~\eqref{eq:conjecture_eigenvalue_intro}, we investigate in this paper the detailed properties of the lowest possible eigenvalue among all possible measures $\mu$ with a fixed maximal charge $\nu<\nu_1$. That is, we look at the minimization problem
\begin{equation}
 \boxed{\lambda_1(\nu):=\inf_{\substack{\mu\geq0\\ \mu(\R^3)\leq\nu}}\lambda_1\left(D_0-\mu\ast\frac1{|x|}\right).}
 \label{eq:def_lambda_1_intro}
\end{equation}
In this language, our conjecture~\eqref{eq:conjecture_eigenvalue_intro} means that $\nu_1=1$ and $\lambda_1(\nu)=\sqrt{1-\nu^2}$. Our main result is Theorem~\ref{thm:existence_optimal_measure} below, which states that for any $0\leq \nu<\nu_1$ there exists at least \textbf{one minimizing measure for $\lambda_1(\nu)$} and that any such minimizer must necessarily \textbf{concentrate on a compact set of Lebesgue measure zero}. The last property is shown by using a unique continuation principle for Dirac operators which we have not been able to locate in the literature and which is the object of Appendix~\ref{app:unique_continuation}. Thus, although we are not yet able to prove the conjecture, we can already ascertain that an optimal measure ought to be singular. Since we have no other information on this minimizer, this justifies the need to study Dirac operators with the Coulomb potential generated by any singular measure. 

The existence of the optimal measure is proved by a rather delicate adaptation of techniques from nonlinear analysis to the context of Dirac operators. The first eigenvalue is a highly nonlinear function of the measure $\mu$, even if the operator only depends linearly on $\mu$. Our main enemy here is the action of the non-compact group of space translations and this will be controlled using Lions' concentration-compactness method~\cite{Lions-84,Lions-84b,Lions-85a,Lions-85b,Struwe,Lewin-conc-comp}. The main difficulty will be to prove that the problem is \textbf{locally compact} under the assumption that $0\leq\nu<\nu_1$ and this is another reason why the critical mass $\nu_1$ plays a central role in this work. In spirit, the local compactness is because the eigenvalue cannot dive into the lower continuous spectrum by definition of $\nu_1$. But the actual proof is rather involved  and deeply relies on variational arguments using the min-max characterization of the first eigenvalue from our previous work~\cite{EstLewSer-20a_ppt}.

The paper is organized as follows. Our main results are all stated in the next section, whereas Sections~\ref{sec:proof_critical_nu's} and~\ref{sec:proof_existence} contain the proofs of our main theorems. Except for the proof of two technical results (Lemmas~\ref{lem:essential_spectrum} and~\ref{lem:CV_K_R_eta_eps} below), this article does not require a detailed knowledge of our previous work~\cite{EstLewSer-20a_ppt}. We provide in Section~\ref{sec:rappels} a summary of the main results of~\cite{EstLewSer-20a_ppt}, which is sufficient to follow the arguments of this paper. 

\bigskip

\noindent{\textbf{Acknowledgement.}} This project has received funding from the European Research Council (ERC) under the European Union's Horizon 2020 research and innovation programme (grant agreement MDFT No 725528 of M.L.), and from the Agence Nationale de la Recherche (grant agreement molQED).

\section{Main results}\label{sec:results}

\subsection{Dirac operators with a general charge distribution}\label{sec:rappels}

In this section we recall several results from~\cite{EstLewSer-20a_ppt} concerning Dirac operators of the form~\eqref{eq:general_form}. We work in a system of units for which $m=c=\hbar=1$. The free Dirac operator $D_0$ is given by
\begin{equation}
D_0\ = -i\boldsymbol{\alpha}\cdot\boldsymbol{\nabla} + \beta = \ - i\sum^3_{k=1} \alpha_k \partial_{x_k} \ + \ {\bf \beta},
\label{eq:def_Dirac}
\end{equation}
where $\alpha_1$, $\alpha_2$, $\alpha_3$ and $\beta$ are Hermitian matrices which  satisfy the following anticommutation relations:
\begin{equation*} \label{CAR}
\left\lbrace
\begin{array}{rcl}
 {\alpha}_k
{\alpha}_\ell + {\alpha}_\ell
{\alpha}_k  & = &  2\,\delta_{k\ell}\,\un,\\
 {\alpha}_k {\beta} + {\beta} {\alpha}_k
& = & 0,\\
\beta^2 & = & \un.
\end{array} \right. 
\end{equation*}
The usual representation in $2\times 2$ blocks is given by 
\begin{equation}
  \beta=\left( \begin{matrix} I_2 & 0 \\ 0 & -I_2 \\ \end{matrix} \right),\quad \; \alpha_k=\left( \begin{matrix}
0 &\sigma_k \\ \sigma_k &0 \\ \end{matrix}\right)  \qquad (k=1, 2, 3)\,,
 \label{eq:Dirac_matrices}
\end{equation}
where the Pauli matrices are defined as
\begin{equation}
\sigma _1=\left( \begin{matrix} 0 & 1
\\ 1 & 0 \\ \end{matrix} \right),\quad  \sigma_2=\left( \begin{matrix} 0 & -i \\
i & 0 \\  \end{matrix}\right),\quad  \sigma_3=\left( 
\begin{matrix} 1 & 0\\  0 &-1\\  \end{matrix}\right) \, .
\label{eq:Pauli_matrices}
\end{equation}
The operator $D_0$ is self-adjoint on $H^1(\R^3,\C^4)$ and its spectrum is $\Spec(D_0)=(-\ii,-1]\cup[1,\ii)$~\cite{Thaller}. 

In~\cite[Thm.~1]{EstLewSer-20a_ppt}, we have shown that under the sole condition that 
\begin{equation}
|\mu|(\R^3)<\ii,\qquad |\mu(\{R\})|<1\qquad \text{for all $R\in\R^3$,} 
 \label{eq:hyp_mu}
\end{equation}
the operator $D_0-\mu\ast|x|^{-1}$  has a \emph{unique self-adjoint extension} on $H^1(\R^3,\C^4)$, whose domain is included in $H^{1/2}(\R^3,\C^4)$. The domain of the extension satisfies
\begin{equation}
 \cD\left(D_0-\mu\ast\frac{1}{|x|}\right)\subset H^1\left(\R^3\setminus\bigcup_{j=1}^KB_r(R_j)\right)
 \label{eq:domain_in_H1_loc}
\end{equation}
for all $r>0$, where $R_1,...,R_K\in\R^3$ are all the points so that $|\mu(\{R_j\})|\geq1/2$. In addition, this operator is the norm-resolvent limit of the similar operators with a truncated Coulomb potential. This result is an extension of several older works on the subject (reviewed for instance in~\cite[Sec.~1.3]{EstLewSer-19}), which have however been mainly concerned with potentials pointwise bounded by a pure Coulomb potential $\nu/|x|$. The case of a finite sum of deltas was considered by Nenciu \cite{Nenciu-77} and Klaus \cite{Klaus-80b}.

In~\cite[Thm.~2--3]{EstLewSer-20a_ppt} we have then considered the particular case when $\mu\geq0$ and we have proved that the domain of the operator can be written in the form
\begin{multline}
\cD(D_0-V_\mu)=\bigg\{\Psi=\begin{pmatrix}\phi\\\chi\end{pmatrix}\in L^2(\R^3,\C^4)\  :\\ \phi\in\cV_\mu,\quad  D_0\Psi-V_\mu\Psi\in L^2(\R^3,\C^4)\bigg\}
\label{eq:inclusion_domain_cV}
\end{multline}
with the Sobolev-type space with weight $(1+V_\mu)^{-1}$
\begin{equation}
\cV_\mu=\left\{\phi\in L^2(\R^3,\C^2)\ :\ \exists g\in L^2(\R^3,\C^2),\quad \sigma\cdot \nabla\phi=(1+V_\mu)^{1/2}g\right\}
\label{eq:cV_mu_maximal}
\end{equation}
endowed with the norm
\begin{equation}
 \norm{\phi}_{\cV_\mu}^2=\int_{\R^3}|\phi(x)|^2\,dx+\int_{\R^3}\frac{|\sigma\cdot \nabla\phi(x)|^2}{1+V_\mu(x)}\,dx
 \label{eq:norm_V}
\end{equation}
Here we have defined for shortness the Coulomb potential
$$\boxed{V_\mu:=\mu\ast\frac{1}{|x|}.}$$
In~\eqref{eq:inclusion_domain_cV} and~\eqref{eq:cV_mu_maximal}, $D_0\Psi$, $V_\mu\Psi$ and $\sigma\cdot \nabla\phi$ are understood in the sense of distributions. That one can characterize the domain of the distinguished self-adjoint extension in terms of a Sobolev space with weight for the upper spinor $\phi$ was realized first in~\cite{EstLos-07,EstLos-08} and later revisited in~\cite{EstLewSer-19,SchSolTok-20}. 

For the convenience of the reader, let us quickly recall how the two-component space $\cV_\mu$ naturally arises in the four-component Dirac problem. It is convenient to consider the eigenvalue equation 
$$(D_0-V_\mu)\,\Psi=\lambda\Psi,\qquad \Psi=\begin{pmatrix}\phi\\\chi\end{pmatrix}$$
for the four-component wavefunction $\Psi$ and to rewrite it in terms of the upper and lower components $\phi$ and $\chi$ as
$$\begin{cases}
(1-V_\mu)\phi-i\sigma\cdot \nabla \chi = \lambda\phi,\\
-(1+V_\mu)\chi-i\sigma\cdot \nabla \phi = \lambda\chi.
  \end{cases}$$
Solving the equation for $\chi$ and inserting it in the equation of $\phi$, one finds (at least formally) that 
\begin{equation}
\left(-\sigma\cdot \nabla\frac{1}{1+\lambda+V_\mu}\sigma\cdot \nabla+1-\lambda-V_\mu\right)\phi=0.
\label{eq:nonlinear_eigenvalue_equation}
\end{equation}
With these manipulations, we have therefore transformed the (strongly indefinite) Dirac eigenvalue problem for $\Psi$ into an elliptic eigenvalue problem, nonlinear in the parameter $\lambda$, for the upper spinor $\phi$. This is reminiscent of the Schur complement formula and the Feshbach-Schur method (see, e.g.,~\cite[Sec.~2.3.1]{AnaLew-20}), and has some interesting numerical advantages for Dirac operators~\cite{DolEstSerVan-00,DesDolEstIndSer-03,KulKolRut-04,ZhaKulKol-04}. The formal operator on the left of~\eqref{eq:nonlinear_eigenvalue_equation} is associated with the quadratic form
\begin{equation}
 q_\lambda(\phi):=\int_{\R^3}\frac{|\sigma\cdot\nabla\phi|^2}{1+\lambda+V_\mu}\,dx+\int_{\R^3}(1-\lambda-V_\mu)|\phi|^2
 \label{eq:def_q_lambda}
\end{equation}
and we proved in~\cite{EstLewSer-20a_ppt} that this quadratic form is bounded from below, and equivalent to the $\cV_\mu$--norm introduced in~\eqref{eq:norm_V} for all $\lambda>-1$, under the sole condition that $\mu\geq0$ and~\eqref{eq:hyp_mu} holds. This allows one to give a meaning to the operator in~\eqref{eq:nonlinear_eigenvalue_equation} by means of the Riesz-Friedrichs method. This is how the space $\cV_\mu$ naturally arises for the upper component $\phi$.  

It turns out that there is a variational interpretation of~\eqref{eq:nonlinear_eigenvalue_equation} in the four-spinor space, which is related to the fact that the energy $\pscal{\Psi,(D_0-V_\mu)\Psi}$ (interpreted here in the quadratic form sense in $H^{1/2}(\R^3,\C^4)$) is essentially concave in $\chi$ and convex in $\phi$. More precisely, let us define as in~\cite{EstLewSer-20a_ppt} the min-max values
\begin{equation}
\lambda_k(D_0-V_\mu) := \  \inf_{
 \scriptstyle W \ {\rm subspace \ of \ } F^+  \atop  \scriptstyle {\rm dim}
\ W =
k  } \  \Sup_{  \scriptstyle \Psi \in ( W \oplus F^- ) \setminus \{ 0 \} } \
\Frac{\pscal{\Psi, (D_0-V_\mu)\,\Psi}}{\|\Psi\|^2}
\label{eq:min-max}  
\end{equation}
for $k \geq 1$, where $F$ is any chosen vector space satisfying 
$$\label{telesc}  C^\ii_c(\R^3,\C^4)\subseteq F \subseteq H^{1/2}(\R^3,\C^4)\,,$$  
and 
$$F^+:=\left\{\Psi=\begin{pmatrix}\phi\\0\\\end{pmatrix}\in F\right\},\qquad F^-:=\left\{\Psi=\begin{pmatrix}0\\\chi\\\end{pmatrix}\in F\right\}.$$
Then it is proved in~\cite{EstLewSer-20a_ppt} that  for $\mu\geq0$ satisfying~\eqref{eq:hyp_mu} and $\mu\neq0$,
\begin{itemize}
 \item[(i)] $\lambda_k(D_0-V_\mu)$ is independent of the chosen space $F$;
 \item[(ii)] $\lambda_k(D_0-V_\mu)\in[-1,1)$ for all $k$;
 \item[(iii)] $\lim_{k\to\ii}\lambda_k(D_0-V_\mu)=1$;
 \item[(iv)] If $k_0$ is the first integer so that $\lambda_{k_0}(D_0-V_\mu)>-1$, then $(\lambda_k(D_0-V_\mu))_{k\geq k_0}$ are all the eigenvalues of $D_0-V_\mu$ in the interval $(-1,1)$, arranged in non-decreasing order and repeated in case of multiplicity.
\end{itemize}
This extends to the case of general Coulomb potentials several previous results about min-max formulas for eigenvalues of Dirac operators~\cite{GriSie-99, DolEstSer-00, DolEstSer-00a, DolEstSer-03, DolEstSer-06,MorMul-15,Muller-16,EstLewSer-19,SchSolTok-20}. 

For $k=1$, one finds after solving the maximization over $\chi$ in~\eqref{eq:min-max} that $\lambda_1(D_0-V_\mu)$ is in fact the \emph{lowest $\lambda\in[-1,1]$} so that 
$$0\in\Spec\left(-\sigma\cdot \nabla\frac{1}{1+\lambda+V_\mu}\sigma\cdot \nabla+1-\lambda-V_\mu\right),$$
the operator appearing in~\eqref{eq:nonlinear_eigenvalue_equation}. Should $\lambda_1(D_0-V_\mu)$ be above $-1$, then it is the first eigenvalue of $D_0-V_\mu$ and $\phi$ is the \emph{first eigenvector} of the operator in~\eqref{eq:nonlinear_eigenvalue_equation}. 

Finally, we recall the Birman-Schwinger principle~\cite{Nenciu-76,Klaus-80b} which states that $\lambda\in(-1,1)$ is an eigenvalue of $D_0-V_\mu$ if and only if $1$ is an eigenvalue of the bounded self-adjoint operator 
\begin{equation}
K_\lambda=\sqrt{V_\mu}\frac{1}{D_0-\lambda}\sqrt{V_\mu}. 
\label{eq:def_K_V}
\end{equation}
The spectrum of $K_\lambda$ is increasing with $\lambda$. This operator was studied at length by Klaus~\cite{Klaus-80b} for $\mu$ the sum of two deltas and in~\cite{EstLewSer-20a_ppt} for general measures. 

In this paper, we are particularly interested in understanding the first min-max level $\lambda_1(D_0-V_\mu)$. Our first goal will be to determine under which condition on the mass of $\mu$ this number is larger than $-1$, hence is the first eigenvalue. 

\subsection{Definition of two critical coupling constants $\nu_0$ and $\nu_1$}

Let us consider any non-negative finite measure $\mu\neq0$ and call 
$$\nu_{\rm max}(\mu):=\max_{R\in\R^3}\mu(\{R\})\in[0,\ii)$$
its largest atom. As we have recalled in the previous section, the operator
$$D_0-t\mu\ast\frac1{|x|}$$
has a distinguished self-adjoint extension for all $0\leq t<\nu_{\rm max}(\mu)^{-1}$, by~\cite[Thm.~1]{EstLewSer-20a_ppt}. The min-max formula~\eqref{eq:min-max} and the Birman-Schwinger principle hold.
Next, we consider the ray $\{t\mu\}_{t>0}$ and ask ourselves at which mass $t\mu(\R^3)$ the first eigenvalue crosses 0 or approaches the bottom $-1$ of the spectral gap. We therefore look at the first min-max level as in~\eqref{eq:min-max}
$$\lambda_1(D_0-tV_\mu):=\inf_{\phi\in H^{1/2}(\R^3,\C^2)\setminus\{0\}}\sup_{\chi\in H^{1/2}(\R^3,\C^2)}\frac{\pscal{\begin{pmatrix}\phi\\\chi\end{pmatrix},\left(D_0-tV_\mu\right)\begin{pmatrix}\phi\\\chi\end{pmatrix}
}}{\|\phi\|^2+\|\chi\|^2}$$
with the potential $V_\mu=\mu\ast|x|^{-1}$, for all $0\leq t<\nu_{\rm max}(\mu)^{-1}$.  Then $t\mapsto \lambda_1(D_0-tV_{\mu})$ is a non-increasing continuous function of $t$ which is the first eigenvalue of $D_0-t V_\mu$ as soon as it stays above $-1$, by~\cite[Thm.~4]{EstLewSer-20a_ppt}. In the limit $t\to0$ we have
$$\lim_{t\to0^+}\lambda_1(D_0-tV_{\mu})=1,$$
that is, for small $t$ the first eigenvalue emerges from $+1$. We have two choices here. Either the eigenvalue decreases and approaches the bottom of the gap $-1$ at some critical $t<\nu_{\rm max}(\mu)^{-1}$, or it stays above it in the whole interval $(0,\nu_{\rm max}(\mu)^{-1})$. We call
$$\nu_1(\mu):=\mu(\R^3)\;\sup\big\{t<\nu_{\rm max}(\mu)^{-1}\ :\ \lambda_1(D_0-V_{t\mu})>-1\big\}$$
the corresponding critical mass. Similarly, we may define
$$\nu_0(\mu):=\mu(\R^3)\;\sup\big\{t<\nu_{\rm max}(\mu)^{-1}\ :\ \lambda_1(D_0-V_{t\mu})>0\big\}.$$
This is the unique value of $t\mu(\R^3)$ for which the first eigenvalue is equal to $0$ if it exists (otherwise it is taken equal to $\mu(\R^3)/\nu_{\rm max}(\mu)$). Of course we could look at a similarly-defined critical number $\nu_\lambda(\mu)$ for all $\lambda\in(-1,1)$ but we concentrate on the cases $\lambda\in\{0,1\}$ since we think they play a special role. By definition we have $\nu_0(\mu)\leq \nu_1(\mu)$. 

By continuity and monotonicity we have $\lambda_1(D_0-tV_\mu)>0$ for all $0\leq t\mu(\R^3)<\nu_0(\mu)$ and $\lambda_1(D_0-tV_\mu)>-1$ for all $0<t\mu(\R^3)<\nu_1(\mu)$. As an example, in the pure Coulomb case $\mu=\delta_0$ we have 
$$\nu_{\rm max}(\delta_0)=\nu_0(\delta_0)=\nu_1(\delta_0)=1.$$
The first eigenvalue reaches $0$ but it never approaches $-1$. 

Note that our definitions are invariant if we multiply the measure $\mu$ by any positive number:
$$\nu_{\rm max}(t\mu)=t\,\nu_{\rm max}(\mu),\qquad \nu_0(t\mu)=\nu_0(\mu),\qquad \nu_1(t\mu)=\nu_1(\mu).$$
When discussing $\nu_0(\mu)$ and $\nu_1(\mu)$ it will often be convenient to take $\mu$ a probability measure. But when looking at the first eigenvalue, our measures $\mu$ will be assumed to satisfy the condition $\mu(\R^3)\leq\nu$. We hope this does not create any confusion. 

In this paper we are interested in the following minimization problems:
\begin{equation}
\boxed{\nu_0:=\inf_{\substack{\mu\geq0\\ \mu\neq0}}\nu_0(\mu),\qquad \nu_1:=\inf_{\substack{\mu\geq0\\ \mu\neq0}}\nu_1(\mu)}
\label{eq:def_nu_c}
\end{equation}
which are respectively the smallest charge for which an eigenvalue can approach $0$ or $-1$, for some probability measure $\mu$. For $\nu<\nu_1$ we also study the minimization problem
\begin{equation}
\boxed{\lambda_1(\nu)=\inf_{\substack{\mu\geq0\\ \mu(\R^3)\leq\nu}}\lambda_1\left(D_0-\mu\ast\frac1{|x|}\right).}
\label{eq:min_lambda_1}
\end{equation}
Since $\nu<\nu_1$, then we know that the eigenvalue in the infimum is always greater than $-1$. As we will see later in Theorem~\ref{thm:existence_optimal_measure}, it turns out that we also have $\lambda_1(\nu)>-1$ for all $\nu<\nu_1$. 

The minimization problem~\eqref{eq:min_lambda_1} is one of our main motivations for studying Dirac operators with general charge densities $\mu$. Indeed, even if we restrict the minimization to $\mu\in C^\ii_c(\R^3,\R_+)$, a minimizing sequence will always converge to a singular measure, as will be proved in Theorem~\ref{thm:existence_optimal_measure} below.

\subsection{Characterization of the critical couplings}

Our main first result is a characterization of the two numbers in~\eqref{eq:def_nu_c} by a formula based on the Birman-Schwinger principle or on Hardy inequalities. Since we study here $\nu_0$ and $\nu_1$, it is convenient to work with probability measures $\mu$ throughout.

\begin{thm}[The critical coupling constants $\nu_0$ and $\nu_1$]\label{thm:critical_nu's}
We have 
\begin{equation}
 \frac1{\nu_0}=\sup_{\substack{\mu\geq0\\ \mu(\R^3)=1}} \norm{\sqrt{V_\mu}\frac{1}{\alpha\cdot p+\beta}\sqrt{V_\mu}}_{L^2(\R^3,\C^4)\to L^2(\R^3,\C^4)}
 \label{eq:formula_nu_0}
\end{equation}
and
\begin{align}
\frac1{\nu_1}&=\sup_{\substack{\mu\geq0\\ \mu(\R^3)=1}} \norm{\sqrt{V_\mu}\frac{1}{\sigma\cdot p}\sqrt{V_\mu}}_{L^2(\R^3,\C^2)\to L^2(\R^3,\C^2)}\nn\\
&=\sup_{\substack{\mu\geq0\\ \mu(\R^3)=1}} \norm{\sqrt{V_\mu}\frac{1}{\alpha\cdot p}\sqrt{V_\mu}}_{L^2(\R^3,\C^4)\to L^2(\R^3,\C^4)}\nn\\
&=\sup_{\substack{\mu\geq0\\ \mu(\R^3)=1}} \max\;\Spec\left(\sqrt{V_\mu}\frac{1}{\alpha\cdot p+\beta +1}\sqrt{V_\mu}\right).
\label{eq:formula_nu_1}
\end{align}
Here $p=-i\nabla$ and $V_\mu:=\mu\ast|x|^{-1}$. The function $\sqrt{V_\mu}$ is everywhere interpreted as a multiplication operator.
In addition, we have the estimates 
\begin{equation}
0.9\simeq \frac{2}{\frac\pi2+\frac2\pi}\leq\nu_0\leq \nu_1\leq1.
 \label{eq:estim_nu_c}
\end{equation}
\end{thm}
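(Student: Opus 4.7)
The plan is to combine the Birman--Schwinger principle \eqref{eq:def_K_V} with the variational characterization of the first min-max level through the quadratic form $q_\lambda$ from \eqref{eq:def_q_lambda}, and then read off the operator-norm formulas. For $\nu_0$ I would apply Birman--Schwinger at $\lambda=0$: since $\mu\geq 0$, the map $t\mapsto\lambda_1(D_0-tV_\mu)$ is continuous and non-increasing from $1$, and the first $t_0$ at which it reaches $0$ is characterized by $1=\max\operatorname{Spec}(t_0 K_0^\mu)$ with $K_0^\mu:=\sqrt{V_\mu}(\alpha\cdot p+\beta)^{-1}\sqrt{V_\mu}$. The key observation is that the matrix $\Gamma:=i\alpha_1\alpha_2\alpha_3\beta$ anticommutes with $D_0$ and commutes with the scalar multiplication $\sqrt{V_\mu}$, so $\Gamma K_0^\mu\Gamma^{-1}=-K_0^\mu$; consequently $\operatorname{Spec}(K_0^\mu)$ is symmetric about $0$ and $\max\operatorname{Spec}(K_0^\mu)=\|K_0^\mu\|$. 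This yields $\nu_0(\mu)=\mu(\R^3)/\|K_0^\mu\|$, and passing to the infimum over probability measures produces \eqref{eq:formula_nu_0}.

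For $\nu_1$, I would rely on the quadratic form $q_\lambda$ in the limit $\lambda\downarrow -1$. By the min-max formula \eqref{eq:min-max} and the monotonicity of $q_\lambda$ in $\lambda$, the condition $\lambda_1(D_0-tV_\mu)>-1$ is equivalent to positivity on $C^\infty_c$ of
\[
q_{-1}^{\,t}(\phi)=\frac{1}{t}\int_{\R^3}\frac{|\sigma\cdot\nabla\phi|^2}{V_\mu}\,dx+2\|\phi\|^2-t\int_{\R^3}V_\mu|\phi|^2\,dx,
\]
read as a quadratic in $t$. Under the joint dilation $\phi_R(x):=R^{-3/2}\phi(x/R)$, $\mu_R(A):=\mu(R^{-1}A)$, the mass $\mu(\R^3)$ is preserved, $\|\phi\|^2$ is invariant, and the two integral terms both rescale by a common factor $R^{-1}$; letting $R\downarrow 0$ the mass term $2\|\phi\|^2$ becomes negligible and the threshold condition reduces to the scale-invariant Hardy-type inequality $\int|\sigma\cdot\nabla\phi|^2/V_\mu\geq t^2\int V_\mu|\phi|^2$. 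Setting $g:=\sigma\cdot\nabla\phi/\sqrt{V_\mu}\in L^2(\R^3,\C^2)$ so that $\phi=(\sigma\cdot\nabla)^{-1}(\sqrt{V_\mu}g)$, this is equivalent to $\|\sqrt{V_\mu}(\sigma\cdot p)^{-1}\sqrt{V_\mu}\|\leq 1/t$, which yields the first line of \eqref{eq:formula_nu_1}. The second line then follows from the off-diagonal $2\times 2$ block form of $\alpha\cdot p$ (whose formal inverse has blocks $(\sigma\cdot p)^{-1}$); for the third line, inverting the block decomposition of $D_0+1$ (as the strong limit of $(D_0+1-\varepsilon)^{-1}$) gives
\[
\sqrt{V_\mu}(D_0+1)^{-1}\sqrt{V_\mu}=\begin{pmatrix}0 & A\\ A & B\end{pmatrix},\qquad A:=\sqrt{V_\mu}(\sigma\cdot p)^{-1}\sqrt{V_\mu},\quad B:=-2\sqrt{V_\mu}|p|^{-2}\sqrt{V_\mu}\leq 0.
\]
The upper bound $\max\operatorname{Spec}\leq\|A\|$ is immediate from $B\leq 0$ and Cauchy--Schwarz, while the matching lower bound is obtained by testing with approximate top eigenvectors of $A$ and using the same dilation to make $\langle v,Bv\rangle/\|v\|^2\to 0$ while keeping $\|Av\|/\|v\|$ invariant.

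The bounds \eqref{eq:estim_nu_c} are established as follows. The inequality $\nu_0\leq\nu_1$ is built into the definitions (since $\{t:\lambda_1>0\}\subseteq\{t:\lambda_1>-1\}$); $\nu_1\leq 1$ is witnessed by $\mu=\delta_0$, whose first Dirac-Coulomb eigenvalue $\sqrt{1-t^2}$ never reaches $-1$ on $(0,1)$; and the lower bound $\nu_0\geq 2/(\pi/2+2/\pi)$ follows from Tix's inequality $\langle\Lambda_+\psi,|D_0|\Lambda_+\psi\rangle\geq\nu_T\int|\Lambda_+\psi(x)|^2/|x|\,dx$ (with $\Lambda_+$ the positive spectral projector of $D_0$ and $\nu_T=2/(\pi/2+2/\pi)$) by applying it at each $y\in\R^3$, integrating against $\mu$ and using the translation invariance of $|D_0|$, which yields $\langle\Lambda_+\psi,V_\mu\Lambda_+\psi\rangle\leq(\mu(\R^3)/\nu_T)\langle\Lambda_+\psi,|D_0|\Lambda_+\psi\rangle$; inserted into \eqref{eq:min-max} with the $\Lambda_\pm$-splitting permitted by property~(i), this forces $\lambda_1(D_0-V_\mu)\geq 0$ whenever $\mu(\R^3)\leq\nu_T$. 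The principal obstacle is the rigorous identification of the third formula in \eqref{eq:formula_nu_1}, since $D_0+1$ is not invertible on $L^2$: controlling $\max\operatorname{Spec}(\sqrt{V_\mu}(D_0+1)^{-1}\sqrt{V_\mu})$ together with its regularized approximants demands a careful analysis of the weighted Sobolev space $\cV_\mu$ from \eqref{eq:cV_mu_maximal} and makes essential use of the distinguished-extension machinery of \cite{EstLewSer-20a_ppt}.
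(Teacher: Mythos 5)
Your overall strategy—Birman--Schwinger for $\nu_0$, the quadratic form $q_\lambda$ and a dilation argument for $\nu_1$, Tix for the lower bound—is aligned in spirit with the paper, but there are two genuine gaps that the paper handles with a dedicated technical lemma that you do not reproduce.

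\textbf{Gap 1 (the $\nu_0$ formula needs the essential-spectrum lemma).} You write that ``the first $t_0$ at which it reaches $0$ is characterized by $1=\max\operatorname{Spec}(t_0K_0^\mu)$'', and from this deduce $\nu_0(\mu)=\mu(\R^3)/\|K_0^\mu\|$. This only works if the eigenvalue actually \emph{does} cross $0$ for some $t<1/\nu_{\max}(\mu)$. For measures $\mu$ with a large atom, the allowed interval of coupling constants ends at $t=1/\nu_{\max}(\mu)$ and the first eigenvalue may still be positive there; in that case $\nu_0(\mu)=\mu(\R^3)/\nu_{\max}(\mu)$ by definition, and one must still show this equals $\mu(\R^3)/\|K_0^\mu\|$. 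The paper handles this by first establishing (Lemma~\ref{lem:essential_spectrum}) that $\Spec_{\rm ess}(K_0)=[-\nu_{\max}(\mu),\nu_{\max}(\mu)]$, so that $\|K_0^\mu\|\geq\nu_{\max}(\mu)$ always, and then argues that in the second scenario equality must hold. Without that lemma your derivation of \eqref{eq:formula_nu_0} is incomplete. (Your symmetry argument using $\Gamma=i\alpha_1\alpha_2\alpha_3\beta$ to get $\max\operatorname{Spec}K_0^\mu=\|K_0^\mu\|$ is correct and is the same charge-conjugation symmetry the paper invokes.)

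\textbf{Gap 2 (the limit $\lambda\downarrow-1$ is not a formal substitution).} For $\nu_1$, the passage from ``$\lambda_1(D_0-tV_\mu)>-1$'' to positivity of $q_{-1}^{\,t}$ and then to the scale-invariant Hardy inequality is precisely where the paper invests the most effort, and it cannot be done by just ``letting $R\downarrow0$'' in the quadratic form. Two technical issues arise. First, the quadratic form $q_\lambda$ degenerates at $\lambda=-1$ (the operator $\sqrt{V_\mu}(D_0+1)^{-1}\sqrt{V_\mu}$ is not bounded below; see Remark~\ref{rmk:form_domain}), so one needs a careful argument that the family $q_{-1+\eps}$ passes to the limit; the paper does this at the Birman--Schwinger level by writing $(D_0+1-\eps)^{-1}\leq(\alpha\cdot p+\sqrt{\eps(2-\eps)}\beta)^{-1}$ and proving the norm convergence \eqref{eq:CV_norm}, which in turn requires Lemma~\ref{lem:essential_spectrum} and the delicate \emph{norm} convergence $\cK_{R,\eta,\eps}\to\cK_{R,\eta,0}$ of Lemma~\ref{lem:CV_K_R_eta_eps} (norm, not strong, because one tests against a Weyl sequence that escapes to infinity). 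Second, as with $\nu_0$, one must again treat the case in which the eigenvalue never reaches $-1$ on the allowed range of $t$, using $\Spec_{\rm ess}(K_{-1+\eps})=[-\nu_{\max},\nu_{\max}]$. Your dilation argument replaces $\mu$ by $\mu_R$, and $\nu_1(\mu_R)\neq\nu_1(\mu)$ in general (the Dirac operator is not scale-invariant because of the mass term); so the ``matching lower bound'' you claim only holds after the supremum over $\mu$ has been taken, not for fixed $\mu$. The paper's chain $\max\Spec(\sqrt{V_\mu}(D_0+1)^{-1}\sqrt{V_\mu})\leq1/\nu_1(\mu)\leq\|\sqrt{V_\mu}(\alpha\cdot p)^{-1}\sqrt{V_\mu}\|$ holds for fixed $\mu$, and only collapses to equality after the supremum via the $\mu_\eps$ scaling; your write-up elides this distinction.

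Your block decomposition of $(D_0+1)^{-1}$ and the Cauchy--Schwarz bound $\max\Spec\leq\|A\|$ when $B\leq0$ is a clean way to recover the upper bound in \eqref{eq:decomp_D_0+1}, and the Tix-based lower bound for $\nu_0$ matches the paper's reliance on \cite{Tix-98} (the paper cites their previous work~\cite{EstLewSer-20a_ppt} for the same statement). But the proof cannot be considered complete without the essential-spectrum analysis: it enters both in showing the Birman--Schwinger characterization covers all $\mu$ and in rigorously passing $\lambda\to-1$.
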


The proof of Theorem~\ref{thm:critical_nu's} is provided later in Section~\ref{sec:proof_critical_nu's}.

\begin{remark}\label{rmk:form_domain}
On the third line of~\eqref{eq:formula_nu_1} we compute the maximum of the spectrum, not the norm. The operator in the parenthesis is not necessarily bounded from below for singular measures $\mu$. Indeed, using that 
$$\frac{1}{\alpha\cdot p+\beta+1}=\frac{\alpha\cdot p+\beta-1}{|p|^2}=\frac1{\alpha\cdot p}+\frac{\beta-1}{|p|^2}$$
and letting again $V_\mu:=\mu\ast|x|^{-1}$, we obtain
\begin{equation}
\sqrt{V_\mu}\frac{1}{\alpha\cdot p+\beta+1}\sqrt{V_\mu}
=\sqrt{V_\mu}\frac{1}{\alpha\cdot p}\sqrt{V_\mu}+(\beta-1)\sqrt{V_\mu}\frac{1}{|p|^2}\sqrt{V_\mu}. 
\label{eq:def_op_D_0+1}
\end{equation}
The first operator is bounded by Kato's inequality
\begin{equation}
 \frac{1}{|x|}\leq\frac\pi2\,|p|
 \label{eq:Kato}
\end{equation}
and the second is non-positive since $\beta\leq1$. The second operator has a unique (Friedrichs) non-positive self-adjoint extension. This is our interpretation of the operator in the parenthesis. 
\end{remark}

\begin{remark}[Estimate on the first eigenvalue for $\mu(\R^3)<\nu_0$]
When $\mu\geq0$ and $\mu(\R^3)<\nu_0$ we can give a simple estimate on the first eigenvalue by using the Birman-Schwinger principle. For $-1\leq E<1$, we write
$$\sqrt{V_\mu}\frac1{D_0-E}\sqrt{V_\mu}=\sqrt{V_\mu}\frac1{D_0}\sqrt{V_\mu}+\sqrt{V_\mu}\frac{E}{(D_0-E)D_0}\sqrt{V_\mu}.$$
Since $|D_0|\geq1$ and $|E|\leq 1$, we have $(D_0-E)D_0\geq(1-|E|)\geq0$ and hence the second term on the right is non-positive for $E\leq0$. Thus, using~\eqref{eq:formula_nu_0} we obtain the estimate 
$$\norm{\sqrt{V_\mu}\frac1{D_0-E}\sqrt{V_\mu}}\leq \frac{\mu(\R^3)}{\nu_0}+\frac{\pi\mu(\R^3)}2 \frac{E_+}{1-E_+}$$
where $E_+=\max(0,E)$. The right side is $<1$ for $\mu(\R^3)<\nu_0$ and 
$$-1\leq E<\frac{\nu_0-\mu(\R^3)}{\frac{\pi}2\nu_0\mu(\R^3)+\nu_0-\mu(\R^3)}.$$
By the Birman-Schwinger principle, this shows that 
\begin{equation}
 \boxed{\lambda_1\left(D_0-\mu\ast\frac1{|x|}\right)\geq \frac{\nu_0-\mu(\R^3)}{\frac{\pi}2\nu_0\mu(\R^3)+\nu_0-\mu(\R^3)}}
 \label{eq:lower_bound_lambda_nu_0}
\end{equation}
for all positive measures $\mu$ such that $\mu(\R^3)<\nu_0$. In Theorem~\ref{thm:signed} below we explain how to use this bound for signed measures. 

Due to the singularity of the resolvent at $-1$ it is not obvious to provide a similar bound for $\mu(\R^3)<\nu_1$.
\end{remark}

The formula~\eqref{eq:formula_nu_1} can be interpreted in the form of Hardy-type inequalities similar to those studied in~\cite{DolEstSer-00,DolEstLosVeg-04,DolEstDuoVeg-07,ArrDuoVeg-13,CasPizVeg-20}. Indeed, denoting  again $V_\mu=\mu\ast|x|^{-1}$ for shortness, the first line in~\eqref{eq:formula_nu_1} means that 
$$\norm{\sqrt{V_\mu}\frac{1}{\sigma\cdot p}\sqrt{V_\mu}u}_{L^2(\R^3,\C^2)}^2\leq \frac{\norm{u}_{L^2(\R^3,\C^2)}^2}{\nu^2_1}\mu(\R^3)^2$$
for all $u\in L^2(\R^3,\C^2)$ and all positive measures $\mu$. Letting $u=V_\mu^{-1/2}\sigma\cdot p\;\phi$ with $\phi\in C^\ii_c(\R^3,\C^2)$, we obtain that $\nu_1$ is also the best constant in the Hardy-type inequality
\begin{equation}
\boxed{  \int_{\R^3}\frac{|\sigma\cdot\nabla\phi|^2}{\mu\ast|x|^{-1}}\,dx\geq\frac{\nu_1^2}{\mu(\R^3)^2}\int_{\R^3}\left(\mu\ast\frac1{|x|}\right)|\phi|^2\,dx}
 \label{eq:nu_c_Hardy_2component}
\end{equation}
for every $\phi\in C^\ii_c(\R^3,\C^2)$ and every positive measure $\mu$ on $\R^3$. From~\eqref{eq:estim_nu_c}, the inequality is known to already hold with the constant $2/(\pi/2+2/\pi)$ instead of $\nu_1^2$. This can also be written in the form
\begin{equation}
\nu_1^2=\inf_{\substack{\mu,\phi\in C^\ii_c(\R^3,\C^2)\\ \phi\neq0,\ \mu\geq0\\ 0<\mu(\R^3)<\ii}}\frac{\dps \mu(\R^3)^2\int_{\R^3}\frac{|\sigma\cdot\nabla\phi|^2}{\mu\ast|x|^{-1}}\,dx}{\dps \int_{\R^3}\left(\mu\ast|x|^{-1}\right)|\phi|^2\,dx}.
\label{eq:nu_c_quotient}
\end{equation}
Similarly, one can see that $\nu_0$ is the best constant in the Hardy-type inequality
\begin{equation}
\boxed{  \int_{\R^3}\frac{|\sigma\cdot\nabla\phi|^2}{1+\mu\ast|x|^{-1}}\,dx\geq\nu_0^2\int_{\R^3}\left(-1+\mu\ast\frac1{|x|}\right)|\phi|^2\,dx}
 \label{eq:nu_0_Hardy_2component}
\end{equation}
for every $\phi\in C^\ii_c(\R^3,\C^2)$ and every \emph{probability} measure $\mu$. 

\medskip

Now we come to our main conjecture which can be expressed as follows. 

\begin{conjecture}[Optimality for a delta]\label{conjecture}
We have 
\begin{equation}
\lambda_1(\nu)=\lambda_1(D_0-\nu|x|^{-1})=\sqrt{1-\nu^2}
\label{eq:conjecture_eigenvalue}
\end{equation}
for all $0\leq\nu<1$. This implies 
\begin{equation}
\nu_0=\nu_1=1.
\label{eq:conjecture_nu}
\end{equation}
\end{conjecture}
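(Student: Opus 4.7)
The cleanest path to Conjecture~\ref{conjecture} would be to establish that $\mu\mapsto\lambda_1(D_0-V_\mu)$ is a concave function on the convex set $\{\mu\geq 0:\mu(\R^3)\leq\nu\}$. Concavity would force the infimum to be attained at an extreme point; the extremes are $0$ and the scaled deltas $\nu\delta_y$, and by translation invariance one may take $y=0$, yielding $\lambda_1(\nu)=\lambda_1(D_0-\nu|x|^{-1})=\sqrt{1-\nu^2}$ and hence also $\nu_0=\nu_1=1$. This is exactly the Schr\"odinger mechanism of \eqref{eq:estim_Schrodinger}, whose Rayleigh quotient is affine in $\mu$. For Dirac, however, the nonlinear quadratic form $q_\lambda$ of \eqref{eq:def_q_lambda} is in fact \textbf{convex} in $\mu$ for each fixed $(\lambda,\phi)$ — the first term involves $V\mapsto(1+\lambda+V)^{-1}$, which is convex on $V\geq 0$, and the second is affine — so $F(\lambda,\mu):=\inf_\phi q_\lambda(\phi;\mu)$ is an infimum of convex-in-$\mu$ functions and concavity of $\lambda_1(\mu)$, defined implicitly by $F(\lambda_1,\mu)=0$, does not propagate in an elementary way. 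The same obstacle appears through the Birman--Schwinger operator $K_\lambda$ of \eqref{eq:def_K_V}: the square roots $\sqrt{V_\mu}$ destroy any obvious linearity in $\mu$.

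Failing concavity, I would turn to the existence theorem (Theorem~\ref{thm:existence_optimal_measure}) to fix a minimizer $\mu_*$ with $\mu_*(\R^3)=\nu$, supported on a compact Lebesgue-null set, let $\Psi_*$ be a normalized ground state with eigenvalue $\lambda_*=\lambda_1(\nu)$, and derive the Euler--Lagrange condition. Hellmann--Feynman perturbation in the direction of a signed measure $\sigma$ satisfying $\sigma(\R^3)=0$ and $\mu_*+\eps\sigma\geq 0$ for small $\eps>0$ gives
$$\partial_\eps\lambda_1\big|_{\eps=0}=-\int_{\R^3} W(y)\,d\sigma(y),\qquad W(y):=\int_{\R^3}\frac{|\Psi_*(x)|^2}{|x-y|}\,dx,$$
so optimality forces $W\leq c$ on $\R^3$ with equality on $\supp(\mu_*)$, for some constant $c>0$. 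Because $W$ is positive, $C^\ii$ off the singular set, superharmonic (since $-\Delta W=4\pi|\Psi_*|^2\geq 0$), and vanishes at infinity, $\supp(\mu_*)$ lies inside its global maximum set. It then remains to prove this maximum set consists of a single point, which by translation invariance may be placed at the origin, giving $\mu_*=\nu\delta_0$ and Conjecture~\ref{conjecture}.

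This last rigidity step is the main obstacle. The first-order Euler--Lagrange admits solutions far more general than a single delta, for instance symmetric configurations such as $\mu_*=(\nu/2)(\delta_{y_0}+\delta_{y_1})$ with $\Psi_*$ invariant under the exchange $y_0\leftrightarrow y_1$. To rule them out I would compute the second variation of $\lambda_1$ in the direction $\sigma=\delta_{y}-\delta_{y'}$ for $y,y'\in\supp(\mu_*)$, which involves the reduced resolvent $(D_0-V_{\mu_*}-\lambda_*)^{-1}$ on $\Psi_*^\perp$, and try to show strict positivity of the second derivative at any non-point-supported candidate — i.e.\ strict instability of any splitting of mass between two nearby points of $\supp(\mu_*)$. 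The difficulty is that this reduced resolvent is sign-indefinite (the essential spectrum below $-1$ contributes negative components), so the desired instability cannot follow from elementary positivity and must exploit genuinely Dirac-specific structure of $\Psi_*$; this reproduces, in another guise, exactly the convexity problem already encountered above. A parallel line of attack via the Hardy reformulation of Theorem~\ref{thm:critical_nu's} — proving $\mu=\delta_0$ realizes the sharp constant $1$ in \eqref{eq:nu_c_Hardy_2component} — reduces for a single delta to a Tix-type inequality with constant $1$, but lifting it to arbitrary $\mu$ seems to require precisely the missing convexity in $\mu$, so all three approaches ultimately converge on the same core obstruction.
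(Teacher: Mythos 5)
The statement you were asked to ``prove'' is explicitly labelled a \emph{conjecture} in the paper, and the paper does not prove it; it remains open. You correctly recognize this, and your text is not a proof but an accurate and well-informed survey of why the obvious strategies fail. That is the right answer here.

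Your analysis aligns with the paper's own discussion, and in one place sharpens it. The authors merely remark that $\lambda_1(D_0-V_\mu)$ ``is not obviously concave'' in $\mu$, whereas you identify the mechanism: for fixed $(\lambda,\phi)$ the quadratic form $q_\lambda(\phi;\mu)$ of \eqref{eq:def_q_lambda} is \emph{convex} in $\mu$ (the map $V\mapsto(1+\lambda+V)^{-1}$ is convex and the remaining term is affine), so $\inf_\phi q_\lambda$ is an infimum of convex functions rather than of affine ones as in the Schr\"odinger case \eqref{eq:Schrodinger_minimum}, and no definite convexity in $\mu$ survives. Your Hellmann--Feynman derivation of the first-order condition $W\leq\max W$ on $\R^3$ with equality on $\supp(\mu_*)$ reproduces exactly the paper's Euler--Lagrange condition in Theorem~\ref{thm:existence_optimal_measure}$(iii)$, which the paper obtains instead by expanding $q_{\lambda_1,\mu_t}$ in $t$; the two routes are equivalent since $|\chi|^2=|\sigma\cdot\nabla\phi|^2/(1+\lambda_1+V_\mu)^2$ converts the derivative of $q_\lambda$ into $-\int W\,d(\mu'-\mu)$ with $|\Psi|^2=|\phi|^2+|\chi|^2$. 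Finally, your identification of the rigidity step -- showing the set $K$ of \eqref{eq:Euler-Lagrange} is a single point rather than merely Lebesgue-null -- as the genuine open obstruction is precisely the paper's own conclusion; the unique continuation argument in Appendix~\ref{app:unique_continuation} yields only $|K|=0$. Your sketch of why the second variation is not sign-definite (the reduced Dirac resolvent on $\Psi_*^\perp$ is indefinite because of the lower essential spectrum) is a plausible reason this last step is hard, though neither you nor the paper takes it further. In short: you correctly concluded that no proof is available, and your diagnosis of where and why it breaks down is consistent with, and at points more explicit than, the paper's.
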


The conjecture states that the first eigenvalue $\lambda_1(D_0-V_\mu)$ is minimal when $\mu=\mu(\R^3)\delta_0$, that is, in the pure Coulomb case. Recall that the similar property holds in the Schrödinger case~\eqref{eq:estim_Schrodinger}. A stronger conjecture would be that $\lambda_1(D_0-V_\mu)$ is a concave function of $\mu$, but we do not commit ourselves in this direction since this might fail in the strong relativistic regime, whereas~\eqref{eq:conjecture_eigenvalue} could still remain true. Conjecture~\ref{conjecture} implies immediately an earlier conjecture made in~\cite{EstLewSer-20a_ppt} which was restricted to multi-center Coulomb potentials. As was mentioned in the introduction, the conjecture holds for radially symmetric measures by Newton's theorem and numerical simulations from~\cite{ArtSurIndPluSto-10,McConnell-13} suggest that it also holds for $\mu$ the sum of two identical deltas. 

\subsection{An estimate for signed measures}
Before turning to the properties of the lowest possible eigenvalue $\lambda_1(\nu)$, we mention a useful result concerning the critical number $\nu_0$. The following gives the persistence of a gap for signed measures. 

\begin{thm}[Gap for signed measures]\label{thm:signed}
Let $\mu=\mu_+-\mu_-$ be a signed measure with $\mu_\pm\geq0$ and 
$$\nu_\pm:=\mu_\pm(\R^3)<\nu_0,$$ 
the critical number defined in~\eqref{eq:def_nu_c}. Then the Dirac-Coulomb operator 
$$D_0-\mu\ast\frac1{|x|},$$
as defined in~\cite[Thm.~1]{EstLewSer-20a_ppt}, has the gap around the origin
$$\left(-\lambda_1\left(D_0-\mu_-\ast\frac1{|x|}\right)\;,\; \lambda_1\left(D_0-\mu_+\ast\frac1{|x|}\right)\right)$$
which contains the interval
$$\big(-\lambda_1(\nu_-)\;,\;\lambda_1(\nu_+)\big)$$
where we recall that $\lambda_1(\nu)$ is defined in~\eqref{eq:min_lambda_1}.
\end{thm}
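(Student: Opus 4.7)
The plan is to compare $D_0-V_\mu$ to the two one-sided operators $D_0-V_{\mu_+}$ and $D_0+V_{\mu_-}$, whose spectral gaps are controlled by Theorem~\ref{thm:critical_nu's} through the assumption $\nu_\pm<\nu_0$. Define the one-parameter family $A_s:=D_0-sV_{\mu_+}+V_{\mu_-}$ for $s\in[0,1]$, so that $A_0=D_0+V_{\mu_-}$ and $A_1=D_0-V_\mu$. By~\cite[Thm.~1]{EstLewSer-20a_ppt} applied to the signed measure $s\mu_+-\mu_-$, each $A_s$ is a well-defined distinguished self-adjoint extension with essential spectrum $(-\infty,-1]\cup[1,\infty)$, and the family is norm-resolvent continuous in $s$.

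\medskip

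At $s=0$, the antiunitary charge-conjugation identity $C(D_0+V_{\mu_-})C^{-1}=-(D_0-V_{\mu_-})$ identifies $\Spec(A_0)\cap(-1,1)$ with $-\Spec(D_0-V_{\mu_-})\cap(-1,1)$. Since $\lambda_1(D_0-V_{\mu_-})>0$ by~\eqref{eq:lower_bound_lambda_nu_0}, we get $\Spec(A_0)\cap(-1,1)\subset(-1,-\lambda_1(D_0-V_{\mu_-})]$, so the interval $(-\lambda_1(D_0-V_{\mu_-}),\lambda_1(D_0-V_{\mu_+}))$ lies in the resolvent set of $A_0$ and the associated spectral projection $P_0$ vanishes.

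\medskip

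For the continuation to $s=1$, the key point is quadratic-form monotonicity combined with the min-max characterization of~\cite[Thm.~4]{EstLewSer-20a_ppt}. Because $V_{\mu_-}\geq 0$, the inf-sup value from $F^+$ satisfies
\[
\Lambda^+(A_s)\;\geq\;\lambda_1(D_0-sV_{\mu_+})\;\geq\;\lambda_1(D_0-V_{\mu_+})\;\geq\;\lambda_1(\nu_+),
\]
and symmetrically, applying charge conjugation to $-CA_sC^{-1}=D_0-V_{\mu_-}+sV_{\mu_+}$ and using the same monotonicity for the positive perturbation $sV_{\mu_+}$,
\[
\Lambda^-(A_s)\;\leq\;-\lambda_1(D_0-V_{\mu_-})\;\leq\;-\lambda_1(\nu_-).
\]
Granted that $\Lambda^\pm$ are genuine gap boundaries for $A_s$, the endpoints of $(-\lambda_1(D_0-V_{\mu_-}),\lambda_1(D_0-V_{\mu_+}))$ stay strictly outside $\Spec(A_s)$ for all $s\in[0,1]$, so the spectral projection $P_s$ is continuous in $s$ and $P_0=0$ forces $P_1=0$. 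The inclusion of the smaller interval $(-\lambda_1(\nu_-),\lambda_1(\nu_+))$ follows at once from~\eqref{eq:min_lambda_1}.

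\medskip

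The main obstacle is establishing rigorously that $\Lambda^\pm$ are genuine spectral-gap boundaries when $\mu$ is signed. The proof of~\cite[Thm.~4]{EstLewSer-20a_ppt} relies on positivity of the weight $1+\lambda+V_\mu$ in the reduced quadratic form $q_\lambda$~\eqref{eq:def_q_lambda}, which may fail for signed measures. A natural remedy is to first prove the theorem for smooth compactly supported densities $\mu_\pm$, where $V_{\mu_\pm}$ are bounded and standard perturbation theory applies directly to the fixed gap of $D_0-V_{\mu_+}$; equivalently, a Birman-Schwinger reduction using the factorization $D_0-V_\mu-E=(D_0-V_{\mu_+}-E)\bigl(1+(D_0-V_{\mu_+}-E)^{-1}V_{\mu_-}\bigr)$ reduces the question to showing that $-1\notin\Spec\bigl(\sqrt{V_{\mu_-}}(D_0-V_{\mu_+}-E)^{-1}\sqrt{V_{\mu_-}}\bigr)$ for $E$ in the gap, which at $\mu_+=0$ follows by charge conjugation from the Birman-Schwinger characterization of $\lambda_1(D_0-V_{\mu_-})$ together with the bound $\|\sqrt{V_{\mu_-}}D_0^{-1}\sqrt{V_{\mu_-}}\|\leq \nu_-/\nu_0<1$ supplied by Theorem~\ref{thm:critical_nu's}. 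The general case is then recovered by mollification and norm-resolvent convergence.
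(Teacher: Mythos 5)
There is a genuine gap in the final, Birman--Schwinger, branch of your argument, which is essentially the paper's actual route. You correctly factor
\[
D_0-V_\mu-E=(D_0-V_{\mu_+}-E)\Bigl(1+(D_0-V_{\mu_+}-E)^{-1}V_{\mu_-}\Bigr)
\]
and correctly reduce the statement to showing that $-1\notin\Spec\bigl(\sqrt{V_{\mu_-}}\,(D_0-V_{\mu_+}-E)^{-1}\sqrt{V_{\mu_-}}\bigr)$ for $E$ in the claimed interval. But you only justify this at $\mu_+=0$, where the resolvent of the reference operator becomes $(D_0-E)^{-1}$ and the Birman--Schwinger characterization of $\lambda_1(D_0-V_{\mu_-})$ applies directly via charge conjugation. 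The statement you actually need is a comparison between the shifted resolvent $(D_0-V_{\mu_+}-E)^{-1}$ and the free one $(D_0-E)^{-1}$, and mollification plus norm-resolvent convergence cannot bridge that: regularizing $\mu_\pm$ never turns $\mu_+$ into $0$.

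The missing ingredient is the operator inequality
\[
\frac{1}{D_0-V_{\mu_+}-E}\;\geq\;\frac{1}{D_0-E}\qquad\text{for }-1<E<\lambda_1(D_0-V_{\mu_+}),
\]
which is \emph{not} a consequence of operator monotonicity of $x\mapsto x^{-1}$ (the operators have no definite sign). The paper obtains it from the Krein/second-resolvent formula
\[
\frac{1}{D_0-V_{\mu_+}-E}=\frac{1}{D_0-E}+\frac{1}{D_0-E}\sqrt{V_{\mu_+}}\,\frac{1}{1-\sqrt{V_{\mu_+}}\frac{1}{D_0-E}\sqrt{V_{\mu_+}}}\sqrt{V_{\mu_+}}\,\frac{1}{D_0-E},
\]
where the correction term has the form $A\,(1-K)^{-1}A^*$ with $K=\sqrt{V_{\mu_+}}(D_0-E)^{-1}\sqrt{V_{\mu_+}}<1$ by the Birman--Schwinger characterization of $\lambda_1(D_0-V_{\mu_+})$, hence $(1-K)^{-1}>0$ and the correction is nonnegative. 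Sandwiching this inequality with $\sqrt{V_{\mu_-}}$ then gives
\[
\sqrt{V_{\mu_-}}\frac{1}{D_0-V_{\mu_+}-E}\sqrt{V_{\mu_-}}\;\geq\;\sqrt{V_{\mu_-}}\frac{1}{D_0-E}\sqrt{V_{\mu_-}}\;>\;-1
\]
for $-\lambda_1(D_0-V_{\mu_-})<E<\lambda_1(D_0-V_{\mu_+})$, which is exactly what you needed. Your earlier continuation-by-deformation scheme, besides the domain/min-max issue you rightly flag for signed measures, is not the argument the paper uses and would require its own justification of gap persistence; the paper gets the result purely spectrally without any continuation in a parameter.
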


Similar results were proved in~\cite{DolEstSer-06,FouLewTri-20}. Note that the bound~\eqref{eq:lower_bound_lambda_nu_0} implies that the gap also contains the interval
$$\left(-\frac{\nu_0-\nu_-}{\frac{\pi}2\nu_0\nu_-+\nu_0-\nu_-}\;,\; \frac{\nu_0-\nu_+}{\frac{\pi}2\nu_0\nu_++\nu_0-\nu_+}\right).$$
In~\cite{FouLewTri-20} it was even proved that there exists a constant $c=c(\nu_-,\nu_+)>0$ so that $|D_0-\mu\ast|x|^{-1}|\geq c|D_0|$ and this was important for establishing some properties of solutions to Dirac-Fock equations. 

\begin{proof}
One possibility is to use the min-max principle for $D_0-V_\mu$ in the spectral decomposition of $D_0\mp V_{\mu_\pm}$ and a continuation principle in the spirit of~\cite{DolEstSer-00} which states that the eigenvalues of $D_0-(t\mu_+-s\mu_-)\ast|x|^{-1}$ are all decreasing in $t$ and increasing in $s$. We provide here a different proof based on the Birman-Schwinger principle and the resolvent formula. Since $\mu_+(\R^3)<\nu_0$, the operator $D_0-V_{\mu_+}$ has its discrete spectrum included in $(\lambda_1(D_0-V_{\mu_+}),1)$ with $\lambda_1(D_0-V_{\mu_+})>0$. Thus, for $-1<E<\lambda_1(D_0-V_{\mu_+})$
we have 
$$\sqrt{V_{\mu_+}}\frac{1}{D_0-E}\sqrt{V_{\mu_+}}<1,$$
by the Birman-Schwinger principle. Now, following~\cite{Nenciu-76,KlaWus-79,Klaus-80b} we write 
\begin{multline}
\frac{1}{D_0-V_{\mu_+}-E}=\frac{1}{D_0-E}\\+\frac{1}{D_0-E}\sqrt{V_{\mu_+}}\frac{1}{1-\dps\sqrt{V_{\mu_+}}\frac{1}{D_0-E}\sqrt{V_{\mu_+}}}\sqrt{V_{\mu_+}}\frac{1}{D_0-E}
\label{eq:resolvent_V_+}
\end{multline}
and notice that the second operator on the right of~\eqref{eq:resolvent_V_+} is non-negative. Thus we have shown the operator inequality
$$ \frac{1}{D_0-V_{\mu_+}-E}\geq\frac{1}{D_0-E}$$
(this inequality does not immediately follows from the fact that $x\mapsto x^{-1}$ is operator-monotone since the operators have no sign). Multiplying by $\sqrt{V_{\mu_-}}$ on both sides, we obtain 
\begin{equation}
 \sqrt{V_{\mu_-}}\frac{1}{D_0-V_{\mu_+}-E}\sqrt{V_{\mu_-}}\geq \sqrt{V_{\mu_-}}\frac{1}{D_0-E}\sqrt{V_{\mu_-}}.
 \label{eq:compare_resolvents}
\end{equation}
By charge-conjugation, the operator  $D_0+V_{\mu_-}$ has its discrete spectrum included in $(-1,-\lambda_1(D_0-V_{\mu_-}))$ with $\lambda_1(D_0-V_{\mu_-})>0$ and the Birman-Schwinger principle now tells us that 
$$\sqrt{V_{\mu_-}}\frac{1}{D_0-E}\sqrt{V_{\mu_-}}>-1$$
for $-\lambda_1\left(D_0+V_{\mu_-}\right)<E<1$. Hence, after inserting in~\eqref{eq:compare_resolvents}, we find
$$\sqrt{V_{\mu_-}}\frac{1}{D_0-V_{\mu_+}-E}\sqrt{V_{\mu_-}}>-1$$
for all 
$$-\lambda_1\left(D_0-V_{\mu_-}\right)<E<\lambda_1\left(D_0-V_{\mu+}\right).$$
By the Birman-Schwinger principle using the operator $D_0-V_{\mu_+}$ as a reference, this proves that $D_0-V_\mu$ has no eigenvalue in the interval mentioned in the statement.
\end{proof}

\subsection{Existence of an optimal measure}

Our main result in this article concerns the existence of an optimal measure for the variational problem $\lambda_1(\nu)$ defined in~\eqref{eq:min_lambda_1} and all sub-critical coupling constant $0\leq \nu<\nu_1$, with $\nu_1$ as in~\eqref{eq:def_nu_c}.

\begin{thm}[Optimal measure]\label{thm:existence_optimal_measure}
We have the following results:

\medskip

\noindent $(i)$ The function $\nu\mapsto \lambda_1(\nu)$ is Lipschitz-continuous on $[0,\nu_1)$, decreasing and takes its values in $(-1,1]$ with $\lambda_1(0)=1$.

\medskip

\noindent $(ii)$ For any $\nu\in[0,\nu_1)$, there exists a positive measure $\mu_\nu$ with $\mu_\nu(\R^3)=\nu$ so that 
$$\lambda_1\left(D_0-\mu_\nu\ast\frac1{|x|}\right)=\lambda_1(\nu).$$
More precisely, any minimizing sequence $\{\mu_n\}$ for $\lambda_1(\nu)$ is tight up to space translations and converges tightly to an optimal measure for $\lambda_1(\nu)$. 

\medskip

\noindent $(iii)$ Any such minimizer $\mu_\nu$ concentrates on the compact set
\begin{equation}
K:=\left\{x\in\R^3\ :\ |\Psi_\nu|^2\ast\frac1{|\cdot|}(x)=\max_{\R^3} \left(|\Psi_\nu|^2\ast\frac1{|\cdot|}\right)\right\}
\label{eq:Euler-Lagrange}
\end{equation}
where $\Psi_\nu$ is any eigenfunction of $D_0-V_{\mu_\nu}$ of eigenvalue $\lambda_1(\nu)$. The compact set $K$ in~\eqref{eq:Euler-Lagrange} has a zero Lebesgue measure, hence $\mu_\nu$ is singular with respect to the Lebesgue measure.
\end{thm}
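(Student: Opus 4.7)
Part (i) is essentially cheap. Monotonicity of $\nu\mapsto\lambda_1(\nu)$ follows from the fact that the feasible set in~\eqref{eq:min_lambda_1} grows with $\nu$, while strict decrease comes from strict monotonicity of $\lambda_1(D_0-V_\mu)$ in $\mu\geq 0$ through~\eqref{eq:min-max}. The identity $\lambda_1(0)=\lambda_1(D_0)=1$ is immediate, and $\lambda_1(\nu)>-1$ will be obtained a posteriori from (ii): any minimizer has mass $\nu<\nu_1$, whence its first eigenvalue exceeds $-1$ by definition of $\nu_1$. For Lipschitz continuity I use first-order perturbation along a segment $\mu_t=(1-t)\mu_0+t\mu_1$ of non-negative measures of mass in a compact subinterval of $[0,\nu_1)$: the Feynman-Hellmann identity $\frac{d}{dt}\lambda_1(D_0-V_{\mu_t})=-\langle\Psi_t,V_{\mu_1-\mu_0}\Psi_t\rangle$, combined with a uniform $\cV_{\mu_t}$-control on the normalised eigenvector $\Psi_t$ coming from~\eqref{eq:norm_V} and the fact that $\lambda_1$ stays bounded away from $-1$, gives the desired bound.

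For part (ii) I take a minimising sequence $\mu_n$ with $\mu_n(\R^3)=\nu$ and apply Lions' concentration-compactness dichotomy to the probability measures $\mu_n/\nu$. In the vanishing case, $\sup_y\mu_n(B_R(y))\to 0$ for all $R$ forces $V_{\mu_n}\to 0$ in a sense strong enough for~\eqref{eq:min-max} to give $\lambda_1(D_0-V_{\mu_n})\to 1$, contradicting $\lambda_1(\nu)<1$. In the dichotomy case, $\mu_n=\mu_n^a+\mu_n^b+o(1)$ with masses converging to $a,b>0$, $a+b=\nu$, and $\mathrm{dist}(\mathrm{supp}\,\mu_n^a,\mathrm{supp}\,\mu_n^b)\to\infty$: the cross interaction vanishes uniformly on each support and a careful analysis of~\eqref{eq:min-max} yields
$$\liminf_n \lambda_1(D_0-V_{\mu_n})\;\geq\;\min\bigl(\lambda_1(a),\lambda_1(b)\bigr),$$
which is strictly greater than $\lambda_1(\nu)$ by the strict decrease of $\lambda_1$, contradicting minimality. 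Only compactness remains: after translation, $\mu_n$ converges tightly to some $\mu_\nu\geq 0$ with $\mu_\nu(\R^3)=\nu$, and lower semi-continuity of $\mu\mapsto\lambda_1(D_0-V_\mu)$ along tight convergence, obtained by testing~\eqref{eq:min-max} with a fixed pair $(\phi,\chi)\in C^\infty_c(\R^3,\C^4)$ and using weak convergence of $V_{\mu_n}$ against these test spinors, yields $\lambda_1(D_0-V_{\mu_\nu})\leq\lambda_1(\nu)$, hence equality.

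For part (iii), let $\mu_\nu$ be a minimiser, $\Psi_\nu$ an $L^2$-normalised first eigenvector, and $f(x):=(|\Psi_\nu|^2\ast|\cdot|^{-1})(x)$. Perturbing $\mu_\nu\mapsto(1-t)\mu_\nu+t\rho$ with $\rho\geq 0$, $\rho(\R^3)\leq\nu$, the first-order variation of the simple eigenvalue reads
$$\left.\frac{d}{dt}\right|_{t=0^+}\lambda_1\bigl(D_0-V_{(1-t)\mu_\nu+t\rho}\bigr)=\int f\,d\mu_\nu-\int f\,d\rho\geq 0,$$
the inequality being the minimality condition. Testing with $\rho=\nu\delta_x$ for arbitrary $x\in\R^3$ gives $\nu f(x)\leq\int f\,d\mu_\nu$, so $\mu_\nu$ is carried by $K=\{f=\max f\}$; continuity and decay at infinity of $f$ make $K$ compact. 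If $|K|>0$, then on a Lebesgue-density subset one has $\nabla f=0$ a.e., and the distributional identity $-\Delta f=4\pi|\Psi_\nu|^2$ forces $\Psi_\nu=0$ on a set of positive measure; the unique continuation principle of Appendix~\ref{app:unique_continuation} applied to $(D_0-V_{\mu_\nu}-\lambda_1(\nu))\Psi_\nu=0$ then forces $\Psi_\nu\equiv 0$, a contradiction.

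The hard part is clearly local compactness in (ii), i.e.\ ruling out dichotomy together with establishing lower semi-continuity. Since $\lambda_1(D_0-V_\mu)$ is defined by an indefinite min-max and is not obviously concave in $\mu$, the standard subadditivity arguments are unavailable; one must work directly with~\eqref{eq:min-max}, or the reduced nonlinear form~\eqref{eq:def_q_lambda}, and carefully control cross terms between the split pieces. This control requires quantitative spectral estimates on the eigenvectors that rely crucially on the assumption $\nu<\nu_1$ keeping $\lambda_1$ uniformly bounded above the lower continuum threshold $-1$.
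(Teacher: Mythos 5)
Your overall architecture — concentration--compactness for existence, Euler--Lagrange plus unique continuation for singularity of the optimizer — matches the paper, and parts (i) and (iii) are essentially sound. Part (iii) in particular is a legitimate (and perhaps slightly slicker) variant: instead of the iterated Figueiredo--Gossez estimates the paper runs in Step~4 to show $\Psi$ vanishes to all orders, you use that $\nabla f$ and hence $D^2f$ vanish a.e.\ on the level set $\{f=\max f\}$ for $f\in W^{2,p}_{\rm loc}$ (here $p=3/2$ from $\Psi\in H^{1/2}\hookrightarrow L^3$), so $|\Psi|^2=\tfrac{1}{4\pi}\Delta f=0$ a.e.\ on $K$, and then invoke Corollary~\ref{cor:unique_continuation_general_charge} for sets of positive measure. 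This works, though note the corollary itself reduces ``vanishing on a set of positive measure'' to ``vanishing to all orders at a point'' by precisely the Figueiredo--Gossez iteration, so you are not really avoiding that machinery, just relocating it.

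However, part (ii) has two genuine gaps, and they sit exactly at the technical core of the theorem.

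First, your ``lower semi-continuity by testing the min-max with a fixed pair $(\phi,\chi)$'' goes in the wrong direction. Because $\lambda_1$ is an $\inf_\phi\sup_\chi$, fixing $(\phi,\chi)$ only bounds the inner $\sup_\chi$ from below, which after taking $\inf_\phi$ gives no useful one-sided comparison. What testing with a fixed $\phi$ in the reduced form $q_\lambda$ actually yields (and this is what the paper uses) is $q_{\ell,\mu}(\phi)\geq0$ for all $\phi$, i.e.\ $\ell\leq\lambda_1(D_0-V_\mu)$ — the \emph{opposite} of what you need. To get $\lambda_1(D_0-V_\mu)\leq\ell=\lambda_1(\nu)$ the paper extracts a tight piece of $\mu_n$ carrying the full eigenvalue, then proves full convergence (not one-sided semi-continuity) for tight sequences via the Birman--Schwinger operator and the resolvent formula~\eqref{eq:resolvent}, with a uniform bound on the Birman--Schwinger operator below $1$.

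Second — and this is the deeper issue — your treatment of why $\nu<\nu_1$ keeps $\lambda_1$ ``uniformly bounded above $-1$'' is circular. The definition of $\nu_1$ only guarantees that for each \emph{fixed} $\mu$ with $\mu(\R^3)\leq\nu$, $\lambda_1(D_0-V_\mu)>-1$; it does \emph{not} give a uniform gap $\lambda_1(D_0-V_\mu)\geq -1+\delta(\nu)$ over all such $\mu$. Establishing that uniform bound is precisely the content of the paper's Step~2 in the proof of Proposition~\ref{prop:weak_continuity}, and it is the most delicate step of the whole argument. The crucial tool you do not identify is the Hardy-type inequality~\eqref{eq:nu_c_Hardy_2component} characterizing $\nu_1$: combined with the pointwise IMS formula~\eqref{eq:IMS} for the Pauli operator, the scaling bound~\eqref{eq:lower_bound_cV_eps0}, and a careful multi-region localization of the upper spinor, it produces the a priori $H^{1/2}_{\rm loc}$ control that prevents the eigenvalue from diving into the lower continuum. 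Your dichotomy estimate $\liminf_n\lambda_1(D_0-V_{\mu_n})\geq\min(\lambda_1(a),\lambda_1(b))$ also presupposes that $\lambda_1(a),\lambda_1(b)>-1$ with $a,b<\nu$, which again is exactly the uniform statement that has to be proved first. As written, the proposal acknowledges that ``the hard part is local compactness'' and that subadditivity is unavailable, but offers no concrete replacement; the Hardy inequality~\eqref{eq:Hardy_intro}/\eqref{eq:nu_c_Hardy_2component} is what fills this gap in the paper.

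A smaller point on (i): your Feynman--Hellmann route to Lipschitz continuity would need a uniform bound on $\int V_{\mu_1-\mu_0}|\Psi_t|^2$ for the normalized eigenvector, which is not automatic from a $\cV_{\mu_t}$-bound on the upper spinor alone. The paper instead Lipschitz-estimates the resolvent uniformly in $\mu$ using~\eqref{eq:resolvent} and the uniform bound $\sqrt{V_\mu}(D_0+1-\eps_0)^{-1}\sqrt{V_\mu}\leq(\nu_1-\eta)^{-1}$, which again leans on the same quantitative gap. Your approach is not clearly wrong, but it inherits the unresolved uniform-gap issue.
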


The theorem is proved later in Section~\ref{sec:proof_existence}, where the reader will also find a description of the main ideas of the proof. Note that the potential $|\Psi_\nu|^2\ast|x|^{-1}$ is a continuous function tending to zero at infinity, since we know that $\Psi_\nu\in H^{1/2}(\R^3,\C^4)$ by~\cite[Thm.~1]{EstLewSer-20a_ppt}. Hence the set $K$ in~\eqref{eq:Euler-Lagrange} is compact. If we knew that this function attains its maximum at a unique point (that is, $K$ is reduced to one point), we would deduce that $\mu_\nu$ is proportional to a delta measure, as we conjecture. We can prove the weaker statement that $K$ has zero Lebesgue measure, using a unique continuation principle proved in Appendix~\ref{app:unique_continuation}. That the optimal measure $\mu_\nu$  is necessarily singular was our main motivation for studying Dirac operators with general charge distributions. 

The rest of the article is devoted to the proof of our main results.

\section{Characterization of $\nu_0$ and $\nu_1$}\label{sec:proof_critical_nu's}

Our goal in this section is to prove Theorem~\ref{thm:critical_nu's} about the critical numbers $\nu_0$ and $\nu_1$ defined in~\eqref{eq:def_nu_c}. The proof essentially follows from the Birman-Schwinger principle recalled in Section~\ref{sec:rappels} and based on the operator $K_\lambda$ in~\eqref{eq:def_K_V}. However, the argument is more delicate for $\nu_1$ since we are in the situation of an eigenvalue approaching the essential spectrum. In Section~\ref{sec:prop_nu_1} we start by establishing some spectral properties of $K_0$ which will be useful for the proof of Theorem~\ref{thm:critical_nu's}, provided in Section~\ref{sec:proof_thm_nu's}.

We will use that for Coulomb potentials 
\begin{multline}
\Spec\left(\frac{1}{|x|^{\frac12}}\frac{1}{\alpha\cdot p+\beta}\frac{1}{|x|^{\frac12}}\right)=\Spec_{\rm ess}\left(\frac{1}{|x|^{\frac12}}\frac{1}{\alpha\cdot p+\beta}\frac{1}{|x|^{\frac12}}\right)\\=\Spec\left(\frac{1}{|x|^{\frac12}}\frac{1}{\alpha\cdot p}\frac{1}{|x|^{\frac12}}\right)=\Spec_{\rm ess}\left(\frac{1}{|x|^{\frac12}}\frac{1}{\alpha\cdot p}\frac{1}{|x|^{\frac12}}\right)=[-1,1] 
\label{eq:rappel_Coulomb}
\end{multline}
and that 
\begin{equation}
\norm{\frac{1}{|x|^{\frac12}}\frac{1}{\alpha\cdot p+is}\frac{1}{|x|^{\frac12}}}=\norm{\frac{1}{|x|^{\frac12}}\frac{1}{\alpha\cdot p+\beta+is}\frac{1}{|x|^{\frac12}}}=1.
\label{eq:rappel_Coulomb_2}
\end{equation}
for all $s\in\R$. See~\cite{Nenciu-76,Wust-77,Klaus-80b,Kato-83,ArrDuoVeg-13}. We also recall Tix's inequality~\cite{Tix-98}
\begin{equation}
\norm{\frac1{|x|^{\frac12}}\frac{P_0^\pm}{\sqrt{1-\Delta}}\frac1{|x|^{\frac12}}}=\norm{\frac{P_0^\pm}{\sqrt{1-\Delta}}\frac1{|x|}\frac{P_0^\pm}{\sqrt{1-\Delta}}}\leq \frac{\frac\pi2+\frac2\pi}{2}
\label{eq:Tix}
\end{equation}
where $P_0^\pm=\1_{\R_\pm}(D_0)$ are the free Dirac spectral projections. Throughout this section, $\mu$ is by convention always taken to be a probability measure. 

\subsection{Some spectral properties of $K_\lambda$}\label{sec:prop_nu_1}

We need two preliminary lemmas, whose proofs are in fact the only places where we use some technical arguments from~\cite{EstLewSer-20a_ppt}. The first is about the essential spectrum of $K_\lambda$. 

\begin{lemma}[Essential spectrum of $K_\lambda$]\label{lem:essential_spectrum}
Let $\mu$ be any probability measure and $\nu_{\rm max}(\mu):=\max_{R\in\R^3}\mu(\{R\})\leq1$. Then we have
\begin{equation}
\Spec_{\rm ess}\left(\sqrt{\mu\ast\frac1{|x|}}\frac1{\alpha\cdot p+\eps\beta-\lambda}\sqrt{\mu\ast\frac1{|x|}}\right)=\big[-\nu_{\rm max}(\mu),\nu_{\rm max}(\mu)\big]
\label{eq:essential_spectra}
\end{equation}
for all $\eps>0$ and $|\lambda|<\eps$, as well as 
\begin{equation}
\Spec_{\rm ess}\left(\sqrt{\mu\ast\frac1{|x|}}\frac1{\alpha\cdot p}\sqrt{\mu\ast\frac1{|x|}}\right)=[-1,1].
\label{eq:essential_spectra_eps_0}
\end{equation}
\end{lemma}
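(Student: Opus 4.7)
The plan is to apply Weyl's criterion: for a bounded self-adjoint operator $T$, $\rho\in\Spec_{\rm ess}(T)$ if and only if there exists a normalized sequence $(\psi_n)$ in $L^2$ with $\psi_n\wto 0$ and $(T-\rho)\psi_n\to 0$. I will prove both inclusions separately, for both identities simultaneously. The guiding heuristic is that $K_\lambda:=\sqrt{V_\mu}(\alpha\cdot p+\eps\beta-\lambda)^{-1}\sqrt{V_\mu}$ is controlled by the singular behavior of $V_\mu$ at its atoms: in a small ball around an atom $R$ of mass $m$, $V_\mu(x)\approx m/|x-R|$, so $K_\lambda$ locally resembles $m\,|x|^{-\frac12}(\alpha\cdot p)^{-1}|x|^{-\frac12}$, whose (essential) spectrum is $m[-1,1]$ by \eqref{eq:rappel_Coulomb}--\eqref{eq:rappel_Coulomb_2}.

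\emph{Lower bound.} Given $\rho\in(-\nu_{\rm max}(\mu),\nu_{\rm max}(\mu))$, write $\rho=\nu_{\rm max}\sigma$ with $|\sigma|<1$, and pick an atom $R_0$ realizing $\mu(\{R_0\})=\nu_{\rm max}(\mu)$. By \eqref{eq:rappel_Coulomb_2}, $\sigma$ lies in the essential spectrum of $|x|^{-\frac12}(\alpha\cdot p)^{-1}|x|^{-\frac12}$, hence admits a Weyl sequence $(\phi_n)$ there. That operator is invariant under the $L^2$-unitary dilations $\phi\mapsto \tau^{3/2}\phi(\tau\,\cdot)$, so I may require $\supp\phi_n\subset B(0,1/n)$. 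Translating to $R_0$, set $\tilde\phi_n(x)=\phi_n(x-R_0)$. On these shrinking supports I decompose $V_\mu=\nu_{\rm max}/|x-R_0|+W$ where the Newtonian potential $W$ of the remainder measure $\mu-\nu_{\rm max}\delta_{R_0}$ is bounded near $R_0$. It remains to check that three error contributions to $(K_\lambda-\rho)\tilde\phi_n$ all vanish in $L^2$: the one coming from the bounded $W$, the one from $\sqrt{V_\mu}-\sqrt{\nu_{\rm max}/|x-R_0|}$, and the one from the lower-order terms $\eps\beta-\lambda I$ (negligible against $\alpha\cdot p$ for sequences concentrating on shrinking balls). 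All three are controlled by the Coulomb scaling of $(\phi_n)$, delivering a singular sequence for $K_\lambda$ at $\rho$. This proves $[-\nu_{\rm max},\nu_{\rm max}]\subseteq\Spec_{\rm ess}(K_\lambda)$, the closure reaching the endpoints.

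\emph{Upper bound.} Fix $|\rho|>\nu_{\rm max}(\mu)$ and suppose for contradiction that $(\psi_n)$ is a Weyl sequence for $K_\lambda$ at $\rho$. Choose $\delta\in(\nu_{\rm max},|\rho|)$ and decompose $\mu=\mu_{\rm big}+\mu_{\rm rest}$, where $\mu_{\rm big}$ is supported on the finitely many atoms $R_1,\dots,R_K$ of mass $>\delta/2$ and $\mu_{\rm rest}$ has all atoms of mass $\leq\delta/2$. Using a smooth partition of unity $1=\chi_0^2+\sum_{j=1}^K\chi_j^2$ with $\chi_j$ ($j\geq1$) supported in small disjoint balls around the $R_j$, expand $K_\lambda=\sum_{i,j}\chi_i K_\lambda\chi_j$. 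The off-diagonal pieces ($i\neq j$) are compact because their pre- and post-multipliers have disjoint supports separated by the off-diagonal smoothing kernel of $(\alpha\cdot p+\eps\beta-\lambda)^{-1}$. Each diagonal block $\chi_j K_\lambda\chi_j$ ($j\geq1$) is, modulo a compact correction absorbing the bounded part of $V_\mu$ near $R_j$, unitarily conjugate to $m_j|x|^{-\frac12}(\alpha\cdot p)^{-1}|x|^{-\frac12}$, hence of norm $m_j\leq\nu_{\rm max}<|\rho|$ by \eqref{eq:rappel_Coulomb_2}. The remaining block $\chi_0 K_\lambda\chi_0$ is controlled, using the weighted-space characterization of $K_\lambda$ from \cite{EstLewSer-20a_ppt}, by the norm of the Birman-Schwinger operator associated with $\mu_{\rm rest}$; since $\nu_{\rm max}(\mu_{\rm rest})\leq\delta/2<|\rho|$, the same comparison, applied iteratively, yields norm $<|\rho|$. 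Combining these three pieces forces $\|(K_\lambda-\rho)\psi_n\|\geq(|\rho|-\delta)\|\psi_n\|+o(1)$, contradicting the assumption that $\|\psi_n\|=1$ and $(K_\lambda-\rho)\psi_n\to 0$.

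The main obstacle is precisely this last step: quantitatively dominating the $\chi_0$-block, where infinitely many small atoms of $\mu$ may cluster near the large atoms $R_j$ or accumulate at infinity. This is the only place where the technical machinery of \cite{EstLewSer-20a_ppt} (the equivalence of $K_\lambda$ with the form $q_\lambda$ of \eqref{eq:def_q_lambda} and the explicit bound on $\|K_\lambda\|$ in terms of the largest atom) is genuinely used. The case $\eps=0$ of \eqref{eq:essential_spectra_eps_0} proceeds by exactly the same argument, replacing the use of \eqref{eq:rappel_Coulomb} by the massless analogue, since all references to the mass term $\eps\beta$ only entered as lower-order perturbations.
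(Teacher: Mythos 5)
The central gap is that you treat the $\eps=0$ identity~\eqref{eq:essential_spectra_eps_0} as ``exactly the same argument'' as the $\eps>0$ identity~\eqref{eq:essential_spectra}, but the two statements have \emph{different} right-hand sides: for $\eps>0$ the essential spectrum is $[-\nu_{\rm max}(\mu),\nu_{\rm max}(\mu)]$, while for $\eps=0$ it is the full interval $[-1,1]=[-\mu(\R^3),\mu(\R^3)]$, regardless of the atomic structure of $\mu$. Your lower-bound construction concentrates Weyl sequences at a largest atom $R_0$, which only reaches $\pm\nu_{\rm max}(\mu)$. If $\mu$ has no atoms at all, $\nu_{\rm max}(\mu)=0$, yet the lemma claims $\Spec_{\rm ess}=[-1,1]$ for $\eps=0$. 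The paper's mechanism for the lower bound when $\eps=0$ is genuinely different: one exploits the exact scaling invariance of $|x|^{-1/2}(\alpha\cdot p)^{-1}|x|^{-1/2}$ to dilate a Weyl sequence so that it escapes \emph{to infinity}, where $|x|V_\mu(x)\to\mu(\R^3)=1$ by~\eqref{eq:behavior_V_infinity}; concentration at an atom plays no role. Dually, your upper-bound argument would force $\Spec_{\rm ess}\subset[-\nu_{\rm max},\nu_{\rm max}]$ even for $\eps=0$, which is false. The reason the $\eps>0$ case gives a smaller set is that the resolvent kernel of $(\alpha\cdot p+\eps\beta)^{-1}$ decays exponentially, so the exterior block $\1_{\R^3\setminus B_R}K_\lambda\1_{\R^3\setminus B_R}$ is compact; for $\eps=0$ it is \emph{not} compact and contributes norm $\leq (1-N/R)^{-1}\to1$. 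Your proposal does not make this distinction.

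A second, independent gap is the control of the $\chi_0$-block in your upper bound. You assert that its norm is ``$<|\rho|$'' because $\nu_{\rm max}(\mu_{\rm rest})\leq\delta/2$, by ``the same comparison, applied iteratively''. This is circular: the claim that the Birman--Schwinger norm is bounded by the largest atom is precisely what the lemma establishes, so it cannot be invoked as a tool in its own proof, and the iteration has no base case. Moreover the Birman--Schwinger norm of $\mu_{\rm rest}$ is a priori controlled only by Kato's inequality $\frac\pi2\mu_{\rm rest}(\R^3)$, not by $\nu_{\rm max}(\mu_{\rm rest})$. The paper avoids this issue entirely: for $\eps>0$ the exterior block is compact (so contributes nothing to $\Spec_{\rm ess}$), and the intermediate annular region is made compact by appealing to~\cite[Lem.~8]{EstLewSer-20a_ppt}, leaving only the atomic blocks $\1_{B_\eta(R_m)}K_\lambda\1_{B_\eta(R_m)}\leq\nu_m\1_{B_\eta(R_m)}$. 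Your partition should therefore include a third region (a large ball minus the atomic balls), and the compactness statements need to be justified rather than absorbed into a self-referential bound.
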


\begin{proof}[Proof of Lemma~\ref{lem:essential_spectrum}]
Noticing that 
$$\frac1{\alpha\cdot p+\eps\beta-\lambda}=\frac1{\alpha\cdot p+\eps\beta}+\frac\lambda{(\alpha\cdot p+\eps\beta-\lambda)(\alpha\cdot p+\eps\beta)}$$
with
$$\left|\frac\lambda{(\alpha\cdot p+\eps\beta-\lambda)(\alpha\cdot p+\eps\beta)}\right|\leq \frac{C_{\lambda,\eps}}{1+|p|^2}$$
for some constant $C_{\lambda,\eps}$, we see that 
$$\sqrt{V_\mu}\frac1{\alpha\cdot p+\eps\beta-\lambda}\sqrt{V_\mu}-\sqrt{V_\mu}\frac1{\alpha\cdot p+\eps\beta}\sqrt{V_\mu}$$
is compact. Thus the essential spectrum is invariant and it suffices to prove~\eqref{eq:essential_spectra} for $\lambda=0$, which we assume throughout the rest of the proof. 

We write
$$\mu=\sum_{m=1}^M\nu_m\delta_{R_m}+\tilde\mu$$
where $M$ can be infinite and $\tilde\mu$ has no atom. Truncating both the sum and $\tilde\mu$ in space and using Kato's inequality~\eqref{eq:Kato}, we see that it suffices to prove the lemma for a finite sum and for $\tilde\mu$ of compact support, all included in the ball of radius $N$.
For simplicity of notation we still assume that $\mu(\R^3)=1$. We have the pointwise estimate
\begin{equation}
\frac{1}{|x|+N}\leq V_\mu(x)\leq \frac{1}{|x|-N}
\label{eq:behavior_V_infinity}
\end{equation}
for $|x|>N$ which proves that $|x|V_\mu(x)\to 1$ at infinity. 

Note that if $\mu$ has no atom ($\nu_{\rm max}(\mu)=0$) then we know that the operators are compact by~\cite[Lem.~8]{EstLewSer-20a_ppt}, which proves~\eqref{eq:essential_spectra} in this case. Also, the result is well known when $\mu$ is a delta measure, see~\eqref{eq:rappel_Coulomb}. Concentrating trial functions at one of the deltas we see that 
\begin{equation}
\big[-\nu_{\rm max}(\mu),\nu_{\rm max}(\mu)\big]\subset \Spec_{\rm ess}\left(\sqrt{V_\mu}\frac1{\alpha\cdot p+\eps\beta}\sqrt{V_\mu}\right)
\label{eq:inclusion_sp_ess}
\end{equation}
and our main task will be to derive the other inclusion. 

In the case $\eps=0$ we can also dilate functions. By~\eqref{eq:rappel_Coulomb} and a density argument, there exists a (normalized) Weyl sequence $\Psi_n\in C^\ii_c(\R^3\setminus\{0\},\C^4)$ so that $\pscal{\Psi_n,|x|^{-1/2}( \alpha\cdot p)^{-1}|x|^{-1/2}\Psi_n}\to\lambda\in[-1,1]$. By dilating $\Psi_n$ and using the scaling invariance of $|x|^{-1/2}( \alpha\cdot p)^{-1}|x|^{-1/2}$, we can assume that $\Psi_n$ is supported outside of a ball $B_{r_n}$ with $r_n\to\ii$. Next, we write
$$\pscal{\Psi_n,\sqrt{V_\mu}\frac1{\alpha\cdot p}\sqrt{V_\mu}\Psi_n} =\pscal{|x|^{\frac12}\sqrt{V_\mu}\Psi_n,\left(\frac1{|x|^{\frac12}}\frac1{\alpha\cdot p}\frac1{|x|^{\frac12}}\right)|x|^{\frac12}\sqrt{V_\mu}\Psi_n}$$
and we use that 
$$\norm{\big(|x|^{\frac12}\sqrt{V_\mu}-1\big)\Psi_n}_{L^2(\R^3,\C^4)}\leq \frac{C}{r_n}\underset{n\to\ii}\longrightarrow0$$ 
because $||x|^{1/2}\sqrt{V_\mu}-1|\leq C/r_n$ on the support of $\Psi_n$, by~\eqref{eq:behavior_V_infinity}. Since the operator $|x|^{-1/2}( \alpha\cdot p)^{-1}|x|^{-1/2}$ is bounded, we deduce that 
$$\lim_{n\to\ii}\pscal{\Psi_n,\sqrt{V_\mu}\frac1{\alpha\cdot p}\sqrt{V_\mu}\Psi_n}=\lim_{n\to\ii}\pscal{\Psi_n,\frac1{|x|^{\frac12}}\frac1{\alpha\cdot p}\frac1{|x|^{\frac12}}\Psi_n}=\lambda.$$
Thus we have constructed a Weyl sequence for the operator $\sqrt{V_\mu}( \alpha\cdot p)^{-1}\sqrt{V_\mu}$ and we conclude after varying $\lambda$ that 
\begin{equation}
 [-1,1]\subset \Spec_{\rm ess}\left(\sqrt{V_\mu}\frac1{\alpha\cdot p}\sqrt{V_\mu}\right).
 \label{eq:inclusion_sp_ess2}
\end{equation}

Now we discuss the converse inclusions. Similarly as in the proof of~\cite[Thm.~1]{EstLewSer-20a_ppt}, we consider the following partition of unity:
$$1=\sum_{m=1}^M\1_{B_\eta(R_m)}+\1_{B_R\setminus\cup_{m=1}^MB_\eta(R_m)}+\1_{\R^3\setminus B_R}$$
where $R$ is chosen large enough and $\eta$ is chosen small enough, so that the balls $B_\eta(R_m)$ do not intersect and are all included in $B_{R/2}$. We insert our partition of unity on both sides of our operator and expand. We claim that all the cross terms are compact, so that
\begin{align}
 A_\eps&:=\sqrt{V_\mu}\frac1{\alpha\cdot p+\eps\beta}\sqrt{V_\mu}\nn\\
 &= \sum_{m=1}^M\1_{B_\eta(R_m)}A_\eps\1_{B_\eta(R_m)}+\1_{\R^3\setminus B_R}A_\eps\1_{\R^3\setminus B_R}+\cK
 \label{eq:decompose_operator_essential_spectrum}
\end{align}
where $\cK$ is compact. For instance, the compactness of 
$$\1_{B_\eta(R_m)}A_\eps\1_{B_\eta(R_\ell)}$$
with $\ell\neq m$ follows from the same proof as in~\cite[Lem.~7]{EstLewSer-20a_ppt}. The functions $\1_{B_\eta(R_\ell)}\sqrt{V_\mu}$ are in $L^2$ and the operator $(\alpha \cdot p+\eps \beta)^{-1}$ has the kernel
$$(\alpha \cdot p+\eps \beta)^{-1}(x,y)=\left(-i\alpha\cdot\nabla_x+\eps\beta\right)\frac{e^{-\sqrt{\eps}|x-y|}}{4\pi\,|x-y|}.$$
This is exponentially decaying at infinity for $\eps>0$ and equal to
$$(\alpha \cdot p)^{-1}(x,y)=i\frac{\alpha\cdot(x-y)}{4\pi\,|x-y|^3}$$
when $\eps=0$. Similarly, 
$$\1_{\R^3\setminus B_R}A_\eps\1_{B_\eta(R_m)}$$
is compact because $V$ behaves like $1/|x|$ at infinity and
$$\int_{\R^3\setminus B_R}\frac{\left|(\alpha \cdot p+\eps\beta)^{-1}(x,0)\right|^2}{|x|}\,dx<\ii.$$
When $\eps>0$ the integrand is exponentially decaying whereas when $\eps=0$ it behaves like $|x|^{-5}$. Finally, the terms involving $\1_{B_R\setminus\cup_{m=1}^MB_\eta(R_m)}$ are easier to treat since in this intermediate region the potential induced by the pointwise charges is equal to that of a regularized measure, by Newton's theorem:
$$V_\mu=\left(\frac1{|B_{\eta/2}|}\sum_{m=1}^M\nu_m\1_{B_{\eta/2}(R_m)}+\tilde\mu\right)\ast\frac1{|x|}\qquad\text{on $\R^3\setminus\cup_{m=1}^MB_\eta(R_m)$.}$$
Then 
$$\frac{1}{|p|^{\frac12}}\sqrt{V_\mu}\1_{B_R\setminus\cup_{m=1}^MB_\eta(R_m)}$$
is compact by~\cite[Lem.~8]{EstLewSer-20a_ppt}. This is also why the diagonal term does not appear in~\eqref{eq:decompose_operator_essential_spectrum}. By the same argument we can actually infer that 
$$\1_{B_\eta(R_m)}A_\eps\1_{B_\eta(R_m)}=\nu_m\frac{\1_{B_\eta(R_m)}}{|x-R_m|^{\frac12}}\frac{1}{\alpha\cdot p+\eps\beta}\frac{\1_{B_\eta(R_m)}}{|x-R_m|^{\frac12}}+\cK$$
where $\cK$ is compact. Therefore, we have shown that 
\begin{align}
A_\eps&=\sqrt{V_\mu}\frac1{\alpha\cdot p+\eps\beta}\sqrt{V_\mu}\nn\\
 &= \sum_{m=1}^M\nu_m\frac{\1_{B_\eta(R_m)}}{|x-R_m|^{\frac12}}\frac{1}{\alpha\cdot p+\eps\beta}\frac{\1_{B_\eta(R_m)}}{|x-R_m|^{\frac12}}\nn\\
 &\qquad+\1_{\R^3\setminus B_R}\sqrt{V_\mu}\frac{1}{\alpha\cdot p+\eps\beta}\sqrt{V_\mu}\1_{\R^3\setminus B_R}+\cK_{R,\eta,\eps} \label{eq:decomp_A_compact}
 \end{align}
where $\cK_{R,\eta,\eps}$ is compact.  By~\eqref{eq:rappel_Coulomb_2} we have the operator bound
 $$\frac{\1_{B_\eta(R_m)}}{|x-R_m|^{\frac12}}\frac{1}{\alpha\cdot p+\eps\beta}\frac{\1_{B_\eta(R_m)}}{|x-R_m|^{\frac12}}\leq \1_{B_\eta(R_m)}$$
 and therefore we infer 
\begin{multline}
A_\eps\leq \nu_{\rm max}(\mu)\1_{\cup_{m=1}^MB_\eta(R_m)}\\
+\1_{\R^3\setminus B_R}\sqrt{V_\mu}\frac{1}{\alpha\cdot p+\eps\beta}\sqrt{V_\mu}\1_{\R^3\setminus B_R}+\cK_{R,\eta,\eps}.
\label{eq:estim_A_eps}
\end{multline}
When $\eps>0$ the operator 
 $$\1_{\R^3\setminus B_R}\sqrt{V_\mu}\frac{1}{\alpha\cdot p+\eps\beta}\sqrt{V_\mu}\1_{\R^3\setminus B_R}$$
is also compact. Let $\Psi_n\wto0$ be a Weyl sequence such that $(A_\eps-\lambda)\Psi_n\to0$ with $\lambda:=\max\Spec_{\rm ess}(A_\eps)$. Taking the expectation of~\eqref{eq:estim_A_eps} with $\Psi_n$ and using that the compact terms tend to 0, we obtain $\lambda\leq \nu_{\rm max}(\mu)$. By charge-conjugation invariance, the spectrum and essential spectrum of $A_\eps$ are symmetric with respect to the origin and hence, together with~\eqref{eq:inclusion_sp_ess}, this proves~\eqref{eq:essential_spectra}. 

When $\eps=0$ we instead use the behavior at infinity of $V_\mu$ and~\eqref{eq:rappel_Coulomb_2} to infer that 
 $$\1_{\R^3\setminus B_R}\sqrt{V_\mu}\frac{1}{\alpha\cdot p}\sqrt{V_\mu}\1_{\R^3\setminus B_R}\leq \frac{\1_{\R^3\setminus B_R}}{1-\frac{N}{R}}$$
 where ${\rm supp}(\mu)\subset B_N$ and obtain by a similar reasoning
$$A_\eps\leq \frac{1}{1-\frac{N}{R}}+\cK_{R,\eta,\eps}$$
since $\nu_m\leq1$ for all $m$. After taking $R\to\ii$, this proves the inclusion opposite to~\eqref{eq:inclusion_sp_ess2} and concludes the proof of~\eqref{eq:essential_spectra_eps_0}.
\end{proof}

In~\eqref{eq:decomp_A_compact} we have introduced the compact operator $\cK_{R,\eta,\eps}$. The following provides its limit as $\eps\to0$.

\begin{lemma}[Behavior of $\cK_{R,\eta,\eps}$]\label{lem:CV_K_R_eta_eps}
The operator $\cK_{R,\eta,\eps}$ in~\eqref{eq:decomp_A_compact} converges \emph{in norm} to the corresponding compact operator $\cK_{R,\eta,0}$ when $\eps\to0^+$. 
\end{lemma}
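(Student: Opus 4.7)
The plan is to establish norm convergence piece by piece, following the decomposition of $\cK_{R,\eta,\eps}$ from the proof of Lemma~\ref{lem:essential_spectrum}. Each summand has the form $M_f(\alpha\cdot p+\eps\beta)^{-1}M_g$, with the weights $f,g$ built from indicator functions of the three regions $B_\eta(R_m)$, $B_R\setminus\bigcup_m B_\eta(R_m)$, $\R^3\setminus B_R$, together with $\sqrt{V_\mu}$, the atomic singular weights $|\cdot-R_m|^{-1/2}$, and the bounded correction functions $w_m:=\sqrt{V_\mu}-\sqrt{\nu_m/|\cdot-R_m|}$ (bounded on $B_\eta(R_m)$). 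Since there are finitely many summands, it suffices to show $M_f\Delta_\eps M_g\to 0$ in operator norm for each of them, where $\Delta_\eps:=(\alpha\cdot p+\eps\beta)^{-1}-(\alpha\cdot p)^{-1}$.

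The key input is a pointwise kernel estimate on $\Delta_\eps$. Starting from the explicit convolution kernel $(\alpha\cdot p+\eps\beta)^{-1}(x,y)=(-i\alpha\cdot\nabla_x+\eps\beta)\frac{e^{-\eps|x-y|}}{4\pi|x-y|}$ and subtracting the $\eps=0$ version, a direct computation using $|e^{-\eps t}-1|\leq\min(\eps t,1)$ gives
$$|\Delta_\eps(x,y)|\leq C\min\!\left(\frac{\eps}{|x-y|},\,\frac{1}{|x-y|^2}\right),\quad x\neq y.$$
Inserted into the Hilbert--Schmidt norm this yields
$$\norm{M_f\Delta_\eps M_g}_{HS}^2\leq C\iint|f(x)|^2|g(y)|^2\min\!\left(\frac{\eps^2}{|x-y|^2},\,\frac{1}{|x-y|^4}\right)dx\,dy,$$
and since $\norm{T}\leq\norm{T}_{HS}$ it suffices to check that the right-hand side vanishes as $\eps\to 0^+$ for every piece.

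For most summands this is straightforward. When the supports of $f,g$ lie at positive distance (cross terms between two distinct atoms, or between an atom and the exterior), the kernel is bounded and the integral is $O(\eps^2)$. For the intermediate diagonal term the weights are bounded on a bounded region and the 3-dimensional integrability of $|x-y|^{-2}$ yields $O(\eps^2)$. For the atomic correction terms, after expanding $\sqrt{V_\mu(x)}\sqrt{V_\mu(y)}-\sqrt{\nu_m}\,|x-R_m|^{-1/2}\sqrt{\nu_m}\,|y-R_m|^{-1/2}$ via $\sqrt{V_\mu}=\sqrt{\nu_m/|\cdot-R_m|}+w_m$, at most one of the two factors $f,g$ carries a singular weight $|\cdot-R_m|^{-1/2}\1_{B_\eta(R_m)}$; the inner estimate $\int_{B_\eta(R_m)}|y-R_m|^{-1}|x-y|^{-2}\,dy=O\bigl(1+\bigl|\log|x-R_m|\bigr|\bigr)$ is integrable against the bounded weight on the opposite side, again giving an $O(\eps^2)$ contribution.

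The main obstacle is the intermediate-exterior cross term, for which $g=\sqrt{V_\mu}\1_{\R^3\setminus B_R}\sim|y|^{-1/2}$ at infinity and the Hilbert--Schmidt norm at $\eps=0$ is already infinite (logarithmic divergence at infinity). For this piece one splits the integration according to whether $|x-y|\leq\eps^{-1}$ or $|x-y|\geq\eps^{-1}$ and uses $|y|^{-1}\leq R^{-1}$ on the exterior region. The region $|x-y|\leq\eps^{-1}$ contributes $O(\eps^2\cdot\eps^{-1})=O(\eps)$ since $\int_{|x-y|\leq\eps^{-1}}|x-y|^{-2}\,dy=O(\eps^{-1})$ in three dimensions, while the region $|x-y|\geq\eps^{-1}$ contributes $O(\eps)$ from $\int_{|x-y|\geq\eps^{-1}}|x-y|^{-4}\,dy=O(\eps)$. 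Therefore $\norm{M_f\Delta_\eps M_g}_{HS}=O(\sqrt{\eps})$, which still gives norm convergence. Summing over the finitely many pieces yields $\norm{\cK_{R,\eta,\eps}-\cK_{R,\eta,0}}\to 0$ as $\eps\to 0^+$.
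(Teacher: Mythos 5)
Your proof is correct and genuinely different from the paper's in one substantive respect: you handle \emph{every} piece of $\cK_{R,\eta,\eps}$ by explicit Hilbert--Schmidt estimates built on the sharp pointwise kernel bound $|\Delta_\eps(x,y)|\lesssim\min(\eps/|x-y|,\,1/|x-y|^2)$, splitting the exterior-region integral at $|x-y|\sim\eps^{-1}$ for the hardest term. The paper instead disposes of \emph{all} terms touching the intermediate weight $W=\1_{B_R\setminus\cup B_\eta(R_m)}\sqrt{V_\mu}$ by the abstract compactness argument $\sqrt{V_j}|p|^{-1/2}\cdot\bigl(|p|(\alpha\cdot p+\eps\beta)^{-1}\bigr)\cdot|p|^{-1/2}\sqrt{W}$ (using that $|p|^{-1/2}\sqrt{W}$ is compact by [EstLewSer-20a, Lem.~8] and the middle factor converges strongly with uniformly bounded norm), and only falls back on pointwise kernel estimates for the remaining disjoint-support cross terms between Coulomb singularities and the exterior. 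Your route is more self-contained in that it does not invoke the companion paper's compactness lemma, and your $\min(\eps/r,1/r^2)$ bound is tighter than the paper's $3\eps^{1/4}/(4\pi r^{3/2})$; the price is that you must track several cases that the paper's compactness trick subsumes at once, including the intermediate--intermediate diagonal and the correction term $w_m=\sqrt{V_\mu}-\sqrt{\nu_m/|\cdot-R_m|}$ inside each ball.

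Two small remarks. First, for the atom--exterior cross term your claimed $O(\eps^2)$ is actually $O\bigl(\eps^2\log(1/\eps)\bigr)$: with $|f(x)|^2\sim|x-R_m|^{-1}\1_{B_\eta}$ and $|g(y)|^2\lesssim|y|^{-1}\1_{|y|>R}$, the radial integral $\int_{R}^{\infty}r\min(\eps^2/r^2,r^{-4})\,dr$ carries a logarithm from the range $R<r<\eps^{-1}$. This does not affect the conclusion. Second, you correctly use the Yukawa kernel $(-i\alpha\cdot\nabla_x+\eps\beta)\,\frac{e^{-\eps|x-y|}}{4\pi|x-y|}$, consistent with $(\alpha\cdot p+\eps\beta)^2=|p|^2+\eps^2$; the paper's display with $e^{-\sqrt\eps|x-y|}$ appears to be a typo, which explains why the paper's rate is the suboptimal $\eps^{1/4}$ rather than your $\sqrt\eps$.
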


\begin{proof}
The operator $\cK_{R,\eta,\eps}$ can be written in the form 
$$\cK_{R,\eta,\eps}=\sum\sqrt{V_j}(\alpha\cdot p+\eps\beta)^{-1}\sqrt{V_j'}$$
where for each $j$, we have that either $V_j$ or $V_j'$ has compact support, hence belongs to $L^r$ for all $1\leq r<3$. In addition, the supports of $V_j$ and $V_j'$ do not intersect, except for only one term involving $W=\1_{B_R\setminus\cup_{m=1}^MB_\eta(R_m)}\sqrt{V_\mu}$ twice. The terms involving $W$ are rather easy to deal with, since they can be written in the form
$$\sqrt{V_j}\frac1{\alpha\cdot p+\eps\beta}\sqrt{W}=\sqrt{V_j}\frac1{|p|^{\frac12}}\frac{|p|}{\alpha\cdot p+\eps\beta}\frac1{|p|^{\frac12}}\sqrt{W}$$
and since $|p|^{-1/2}\sqrt{W}$ is compact by~\cite[Lem.~8]{EstLewSer-20a_ppt}, the convergence holds in norm. Therefore, we only have to treat the case where $V_j$ and $V_j'$ correspond to either two disjoint balls around some nuclei, or one such ball and the potential $V\1_{\R^3\setminus B_R}$. 

In order to deal with these more complicated terms, it is convenient to use pointwise kernel bounds like in~\cite[Lem.~7]{EstLewSer-20a_ppt}. Note that operator bounds are not very useful since we may have $V_j\neq V_j'$. Recall also that if $|A(x,y)|\leq B(x,y)$, then $\|A\|\leq\|B\|$. 
First we compute the kernel of the difference
\begin{multline}
\left(\frac{1}{\alpha\cdot p+\eps\beta}-\frac1{\alpha\cdot p}\right)(x,y)=-i\frac{\alpha\cdot(x-y)}{4\pi|x-y|^3}\left(1-e^{-\sqrt{\eps}|x-y|}\right)\\
-i\sqrt{\eps}\frac{\alpha\cdot(x-y)}{4\pi|x-y|^2}e^{-\sqrt{\eps}|x-y|}+\eps\beta \frac{e^{-\sqrt{\eps}|x-y|}}{4\pi|x-y|}.
\label{eq:comput_kernel_difference}
\end{multline}
Using for instance that 
$$\frac{1-e^{-r}}{r^2}\leq \frac1{r^{\frac32}},\qquad \frac{e^{-r}}{r}\leq\frac1{r^{\frac32}}$$
we obtain the crude but simple bound for $\eps$ small enough
\begin{equation*}
\left|\left(\frac{1}{\alpha\cdot p+\eps\beta}-\frac1{\alpha\cdot p}\right)(x,y)\right|\leq \frac{3\eps^{\frac14}}{4\pi|x-y|^{\frac32}}.
\end{equation*}
In the case of two non-overlapping balls around two different singularities, $|x-y|$ stays bounded and never vanishes. Hence we find by~\cite[Lem.~7]{EstLewSer-20a_ppt}
$$\norm{\1_{B_\eta(R_m)}\sqrt{V_\mu}\left(\frac{1}{\alpha\cdot p+\eps\beta}-\frac{1}{\alpha\cdot p}\right)\1_{B_\eta(R_\ell)}\sqrt{V_\mu}}\leq C\eps^{\frac14}$$
  with $m\neq\ell$ (the bound can be improved to $\sqrt\eps$).  For the cross term involving one singularity and $V\1_{\R^3\setminus B_R}$ we obtain
\begin{multline}
\norm{\1_{\R^3\setminus B_R}\sqrt{V_\mu}\left(\frac{1}{\alpha\cdot p+\eps\beta}-\frac{1}{\alpha\cdot p}\right)\1_{B_\eta(R_\ell)}\sqrt{V_\mu}}\\
\leq C\eps^{\frac14} \left(\int_{\R^3\setminus B_R}\frac{dx}{|x|^4}\right)^{\frac12}.
\end{multline}
which concludes the proof that $\cK_{R,\eta,\eps}\to \cK_{R,\eta,0}$ in norm. 
\end{proof}

After these preparations we are able to prove the following result, which will be the main ingredient for the proof of Theorem~\ref{thm:critical_nu's} in the case of the critical number $\nu_1$.

\begin{prop}\label{prop:nu_1}
For every probability measure $\mu$, we have
\begin{equation}
\lim_{\eps\to0^+}\max\;\Spec\left(\sqrt{V_\mu}\frac1{D_0+1-\eps}\sqrt{V_\mu}\right)\leq \norm{\sqrt{V_\mu}\frac1{\alpha\cdot p}\sqrt{V_\mu}}.
\label{eq:limit_max_Spec}
\end{equation}
\end{prop}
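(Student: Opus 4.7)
The plan is to use the resolvent identity $(D_0+1-\eps)^{-1}=(D_0-1+\eps)/(|p|^2+c)$ with $c=2\eps-\eps^2$, which follows from the algebraic fact $(D_0+1-\eps)(D_0-1+\eps)=D_0^2-(1-\eps)^2=|p|^2+c$. Conjugating by $\sqrt{V_\mu}$ splits the Birman--Schwinger operator as $K_\eps=T_\eps+R_\eps$, where $T_\eps=\sqrt{V_\mu}(\alpha\cdot p)(|p|^2+c)^{-1}\sqrt{V_\mu}$ is the ``kinetic'' piece and $R_\eps=\sqrt{V_\mu}(\beta-1+\eps)(|p|^2+c)^{-1}\sqrt{V_\mu}$ the ``mass'' remainder.

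For the mass remainder, since $\beta-1\leq 0$ commutes (as a diagonal $4\times 4$ matrix) with the scalar Fourier multiplier $(|p|^2+c)^{-1}$, we have $R_\eps\leq\eps\sqrt{V_\mu}(|p|^2+c)^{-1}\sqrt{V_\mu}$ as self-adjoint operators. Kato's inequality $|x-R|^{-1}\leq(\pi/2)|p|$, integrated against the probability measure $\mu$, yields $V_\mu\leq(\pi/2)|p|$ as a quadratic form on $H^{1/2}$, whence
\[
\Bigl\|\sqrt{V_\mu}(|p|^2+c)^{-1}\sqrt{V_\mu}\Bigr\|\leq\frac\pi2\sup_{t\geq 0}\frac{t}{t^2+c}=\frac{\pi}{4\sqrt c},
\]
so $\max\Spec R_\eps=O(\sqrt\eps)\to 0$. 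The kinetic part $T_\eps$ is purely off-diagonal in the $2\times 2$ block structure of $\alpha$, taking the form $\left(\begin{smallmatrix}0&A_\eps\\A_\eps&0\end{smallmatrix}\right)$ with the self-adjoint operator $A_\eps=\sqrt{V_\mu}(\sigma\cdot p)(|p|^2+c)^{-1}\sqrt{V_\mu}$ on $L^2(\R^3,\C^2)$, so $\max\Spec T_\eps=\|A_\eps\|$. By the same block decomposition of $\alpha\cdot p$, the right-hand side of the proposition equals $\|A_0\|$ with $A_0=\sqrt{V_\mu}(\sigma\cdot p)^{-1}\sqrt{V_\mu}$.

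It remains to prove $\limsup_{\eps\to 0^+}\|A_\eps\|\leq\|A_0\|$, which is the heart of the matter. For this I would exploit the factorization
\[
\frac{\sigma\cdot p}{|p|^2+c}=\frac12\Bigl((\sigma\cdot p+i\sqrt c)^{-1}+(\sigma\cdot p-i\sqrt c)^{-1}\Bigr)
\]
to identify $A_\eps$ as the self-adjoint real part of $\sqrt{V_\mu}(\sigma\cdot p+i\sqrt c)^{-1}\sqrt{V_\mu}$, giving $\|A_\eps\|\leq\|\sqrt{V_\mu}(\sigma\cdot p+i\sqrt c)^{-1}\sqrt{V_\mu}\|$. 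The remaining---and hardest---step is to show that this latter norm converges to $\|A_0\|$ as $\sqrt c\to 0^+$. I would analyse this through the Rayleigh quotient
\[
\bigl\|\sqrt{V_\mu}(\sigma\cdot p+is)^{-1}\sqrt{V_\mu}\bigr\|^{-2}=\inf_{h\neq 0}\frac{\int V_\mu^{-1}|(\sigma\cdot p+is)h|^2\,dx}{\int V_\mu|h|^2\,dx},
\]
expanding the numerator by integration by parts into $\int V_\mu^{-1}|\sigma\cdot p\,h|^2+s^2\int V_\mu^{-1}|h|^2+s\sum_k\int(\partial_k V_\mu^{-1})\,h^\dagger\sigma_k h\,dx$, and controlling the $O(s)$ cross-term with the pointwise identity $\sum_k(h^\dagger\sigma_k h)^2=|h|^4$ together with structural bounds tying $|\nabla V_\mu^{-1}|$ to $V_\mu^{-1}$ valid for probability measures. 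This last passage is where the positivity $\mu\geq 0$ plays a crucial role.
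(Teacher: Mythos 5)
Your opening moves are sound and do offer a slightly different decomposition than the paper: you keep the kinetic part $T_\eps = \sqrt{V_\mu}\,(\alpha\cdot p)(|p|^2+c)^{-1}\sqrt{V_\mu}$ exactly, and show $\max\Spec R_\eps = O(\sqrt\eps)$, whereas the paper folds the mass term into the resolvent $(\alpha\cdot p+\beta\sqrt{\eps(2-\eps)})^{-1}$ and then uses an operator inequality. Both routes reduce the statement to proving
\[
\limsup_{\eps\to0^+}\norm{A_\eps}\leq\norm{A_0}\quad\text{with }A_\eps=\sqrt{V_\mu}\,\frac{\sigma\cdot p}{|p|^2+c}\,\sqrt{V_\mu},\ A_0=\sqrt{V_\mu}\,\frac{1}{\sigma\cdot p}\,\sqrt{V_\mu},
\]
and your further reduction via $\norm{A_\eps}\leq\norm{\sqrt{V_\mu}(\sigma\cdot p+i\sqrt c)^{-1}\sqrt{V_\mu}}$ is correct. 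So far so good, and up to here it is a viable variant.

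The genuine gap is the final step, which you correctly identify as ``the heart of the matter'' but then only sketch. Two objections. First, the proposed ``structural bound tying $|\nabla V_\mu^{-1}|$ to $V_\mu^{-1}$'' does not hold uniformly: even for a probability measure like $\mu=\nu\delta_0+(1-\nu)\delta_R$, the ratio $|\nabla V_\mu|/V_\mu^2=|\nabla V_\mu^{-1}|$ blows up like $1/\nu$ near the light atom; and for the pure Coulomb case the cross term $s\int(\partial_k V_\mu^{-1})h^\dagger\sigma_k h\,dx$ equals $s\int\frac{x\cdot S(h)}{|x|}$, which is $O(s)\int|h|^2$ and is not absorbed by either $s^2\int V_\mu^{-1}|h|^2$ or $\int V_\mu^{-1}|\sigma\cdot\nabla h|^2$ with an $s$-independent constant. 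Second, and more structurally: as the paper points out right after~\eqref{eq:CV_norm}, for $\mu=\delta_0$ the operator $\sqrt{V_\mu}\big((\sigma\cdot p+is)^{-1}-(\sigma\cdot p)^{-1}\big)\sqrt{V_\mu}$ has norm independent of $s$ by scaling, so the operator does \emph{not} converge in norm; only the norms themselves converge. Any argument that tries to estimate the $O(s)$ term uniformly in $h$ is therefore doomed. The paper instead proves $\limsup\norm{A_{\eps_n}}\leq\norm{A_0}$ by a contradiction argument: a would-be $\lambda>\norm{A_0}\geq1$ must be an eigenvalue (using Lemma~\ref{lem:essential_spectrum}, which shows $\Spec_{\rm ess}(A_\eps)\subset[-\nu_{\rm max},\nu_{\rm max}]$), the eigenvectors go weakly to zero since $\lambda>\norm{A_0}$, and then the localized decomposition~\eqref{eq:decomp_A_compact} together with the norm convergence $\cK_{R,\eta,\eps}\to\cK_{R,\eta,0}$ of Lemma~\ref{lem:CV_K_R_eta_eps} forces $\lambda\leq1$ after sending $R\to\infty$. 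This machinery (partition of unity near the atoms, compactness of cross terms, charge-conjugation symmetry) is precisely what your Rayleigh-quotient sketch leaves out, and it cannot be replaced by pointwise integration-by-parts bounds.
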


Using the Birman-Schwinger principle we will see later that the limit on the left of~\eqref{eq:limit_max_Spec} is just $1/\nu_1(\mu)$. This limit exists because the function is increasing.

\begin{proof}[Proof of Proposition~\ref{prop:nu_1}]
For $0<\eps\leq1$, we write
\begin{align}
 \frac{1}{D_0+1-\eps}&=\frac{\alpha\cdot p+\beta-1+\eps}{|p|^2+\eps(2-\eps)}+\nn\\
 &=\frac{1}{\alpha\cdot p+\beta\sqrt{\eps(2-\eps)}}+\frac{(1-\sqrt{\eps(2-\eps)})\beta-1+\eps}{|p|^2+\eps(2-\eps)}\nn\\
 &\leq \frac{1}{\alpha\cdot p+\beta\sqrt{\eps(2-\eps)}}
 \label{eq:decomp_D_0+1}
\end{align}
where we have used here that 
$$(1-\sqrt{\eps(2-\eps)})\beta-1+\eps\leq \eps -\sqrt{\eps(2-\eps)}\leq0.$$
We obtain the operator inequality
\begin{equation}
\sqrt{V_\mu}\frac{1}{D_0+1-\eps}\sqrt{V_\mu}\leq \sqrt{V_\mu}\frac{1}{\alpha\cdot p+\sqrt{\eps(2-\eps)}\beta}\sqrt{V_\mu}
\label{eq:estim_max_spectrum}
\end{equation}
and it implies 
$$\max\Spec\left(\sqrt{V_\mu}\frac{1}{D_0+1-\eps}\sqrt{V_\mu}\right)\leq \norm{\sqrt{V_\mu}\frac{1}{\alpha\cdot p+\sqrt{\eps(2-\eps)}\beta}\sqrt{V_\mu}}.$$
Now we show that 
\begin{equation}
 \lim_{\eps\to0^+}\norm{\sqrt{V_\mu}\frac{1}{\alpha\cdot p+\eps\beta}\sqrt{V_\mu}}=\norm{\sqrt{V_\mu}\frac{1}{\alpha\cdot p}\sqrt{V_\mu}}.
 \label{eq:CV_norm}
\end{equation}
Note that we cannot expect that the operator on the left of~\eqref{eq:CV_norm} converges in norm to the operator on the right. For instance, in the Coulomb case $\mu=\delta_0$, we have by scaling 
$$\norm{\frac1{|x|^{\frac12}}\left(\frac{1}{\alpha\cdot p+\eps\beta}-\frac{1}{\alpha\cdot p}\right)\frac1{|x|^{\frac12}}}=
\norm{\frac1{|x|^{\frac12}}\left(\frac{1}{\alpha\cdot p+\beta}-\frac{1}{\alpha\cdot p}\right)\frac1{|x|^{\frac12}}}$$
for all $\eps>0$ and this certainly does not converge to 0. However, in the Coulomb case the two norms in~\eqref{eq:CV_norm} are equal to 1, as recalled in~\eqref{eq:rappel_Coulomb_2}.

We will use that 
$$A_\eps:=\sqrt{V_\mu}\frac{1}{\alpha\cdot p+\eps\beta}\sqrt{V_\mu}\underset{\eps\to0^+}{\longrightarrow}\sqrt{V_\mu}\frac{1}{\alpha\cdot p}\sqrt{V_\mu}=:A_0$$
strongly in the operator sense, that is, $A_\eps\Psi\to A_0\Psi$ for any $\Psi\in L^2(\R^3,\C^4)$. In fact we have $\norm{A_\eps}\leq \pi/2$ for all $\eps\geq0$ by Kato's inequality~\eqref{eq:Kato} and the limit holds when $\Psi\in C^\ii_c(\R^3,\C^4)$, hence must hold everywhere by density. The maximum of the spectrum of a bounded self-adjoint operator 
\begin{equation}
\max\Spec(A)=\sup_{\|\Psi\|=1}\pscal{\Psi,A\Psi}
\label{eq:variational_max_spec}
\end{equation}
is lower semi-continuous for the strong operator convergence. The spectrum of $A_\eps$ is symmetric with respect to the origin by charge conjugation symmetry and hence in our case $\max\Spec(A_\eps)=\|A_\eps\|$. Thus we conclude that 
\begin{equation}
 \norm{\sqrt{V_\mu}\frac{1}{\alpha\cdot p}\sqrt{V_\mu}}\leq \liminf_{\eps\to0^+}\norm{\sqrt{V_\mu}\frac{1}{\alpha\cdot p+\eps\beta}\sqrt{V_\mu}}.
 \label{eq:argument_norm_wlsc}
\end{equation}

It remains to prove the converse inequality. We argue by contradiction and assume that, after extracting a limit, $\|A_{\eps_n}\|\to \lambda>\|A_0\|$.  Since $\|A_0\|\geq1$ by Lemma~\ref{lem:essential_spectrum}, this implies in particular that $\|A_{\eps_n}\|=:\lambda_n$ is an eigenvalue of $A_{\eps_n}$ for $n$ large enough. Let $u_n$ be a corresponding normalized eigenvector:
$$A_{\eps_n} u_n=\lambda_n \, u_n.$$
After extracting another subsequence, we may assume that $u_n\wto u$ weakly. Passing to the weak-$\ast$ limit (using the strong convergence of $A_{\eps_n}$), we obtain 
$A_0u=\lambda u$ and therefore $u=0$ since $\lambda>\|A\|$. Now we go back to~\eqref{eq:decomp_A_compact} and use that
$$\lim_{n\to\ii}\pscal{u_n,\cK_{R,\eta,\eps_n}u_n}=\lim_{n\to\ii}\pscal{u_n,\cK_{R,\eta,0}u_n}=0$$
due to the norm convergence $\cK_{R,\eta,R,\eps}\to \cK_{R,\eta,0}$ from Lemma~\ref{lem:CV_K_R_eta_eps} and the compactness of $\cK_{R,\eta,0}$. We find
$$\lambda=\lim_{n\to\ii}\pscal{u_n,A_{\eps_n}u_n}\leq \frac{1}{1-\frac{N}{R}}.$$
Taking $R\to\ii$ we conclude that $\lambda\leq1$, a contradiction. Therefore we have proved~\eqref{eq:CV_norm} and this concludes the proof of Proposition~\ref{prop:nu_1}.
\end{proof}

After these preliminary results we turn to the proof of Theorem~\ref{thm:critical_nu's}.

\subsection{Proof of Theorem~\ref{thm:critical_nu's}}\label{sec:proof_thm_nu's}
The proof follows essentially from the Birman-Schwinger principle, with some technical difficulties for $\nu_1$. 

\subsubsection*{Step 1. Proof of~\eqref{eq:estim_nu_c}}

The lower bound in~\eqref{eq:estim_nu_c} follows immediately from the definition of $\nu_1$ and the statement in~\cite[Thm.~3]{EstLewSer-20a_ppt} that $\lambda_1(D_0-V_\mu)>0$ for $\mu(\R^3)<2/(\pi/2+2/\pi)$, due to Tix's inequality~\eqref{eq:Tix}. Also, by definition we have  $\nu_0(\mu)\leq\nu_1(\mu)$ for every $\mu$, and therefore $\nu_0\leq\nu_1$. Finally, we have already seen that $\nu_1\leq\nu_1(\delta_0)=1$. 

\subsubsection*{Step 2. Proof of~\eqref{eq:formula_nu_0}}

Recall the Birman-Schwinger principle from~\cite[Thm.~3]{EstLewSer-20a_ppt} which tells us that $\lambda$ is an eigenvalue of $D_0-t V_\mu$ if and only if $1/t$ is an eigenvalue of the bounded operator $K_\lambda=\sqrt{V_\mu}(D_0-\lambda)^{-1}\sqrt{V_\mu}$. The ordered eigenvalues of this operator (outside of the essential spectrum) are increasing with $\lambda$ and Lipschitz (since $z\mapsto K_z$ is an analytic family of bounded operators, they are indeed real analytic curves that may cross). We conclude that $\lambda$ is the first eigenvalue of $D_0-tV_\mu$ if and only if $1/t$ is in fact the largest eigenvalue of $K_\lambda$. 

Due to our definition~\eqref{eq:def_nu_c} of $\nu_0(\mu)$ there are two situations to be considered for the decreasing function $t\mapsto \lambda_1(D_0-tV_\mu)$. If the eigenvalue crosses $0$ before $t$ reaches $1/\nu_{\rm max}(\mu)$, then the Birman-Schwinger principle provides 
\begin{equation}
\frac1{\nu_0(\mu)}=\max\;\Spec\left(\sqrt{V_\mu}\frac1{D_0}\sqrt{V_\mu}\right)=\norm{\sqrt{V_\mu}\frac1{D_0}\sqrt{V_\mu}}.
\label{eq:formula_BS_nu_0}
\end{equation}
The last equality holds because the spectrum is symmetric, by charge-conjugation.
The second situation is when $t$ reaches $1/\nu_{\rm max}(\mu)$ before the eigenvalue crosses the origin. From the Birman-Schwinger principle this means that necessarily
$$\max\;\Spec\left(\sqrt{V_\mu}\frac1{D_0}\sqrt{V_\mu}\right)\leq \nu_{\rm max}(\mu)$$
otherwise  the eigenvalue would have crossed the origin earlier. But by Lemma~\ref{lem:essential_spectrum} we know that the essential spectrum of the operator on the left is $[-\nu_{\rm max}(\mu),\nu_{\rm max}(\mu)]$ so that the maximum of the spectrum is always larger than or equal to $\nu_{\rm max}(\mu)$. Thus there must be equality and we conclude that~\eqref{eq:formula_BS_nu_0} holds in all cases. This proves~\eqref{eq:formula_nu_0} since $\nu_0=\inf_{\mu}\nu_0(\mu)$. In fact we have also shown that 
$$\lim_{t\to\nu_0(\mu)^-}\lambda_1(D_0-tV_\mu)=0.$$

\subsubsection*{Step 3. Proof of~\eqref{eq:formula_nu_1}}

The argument for $\nu_1(\mu)$ is similar but a little more subtle since we are approaching the lower essential spectrum. We have by Lemma~\ref{lem:essential_spectrum}
$$\Spec_{\rm ess}\left(\sqrt{V_\mu}\frac{1}{D_0+1-\eps}\sqrt{V_\mu}\right)=\big[-\nu_{\rm max}(\mu),\nu_{\rm max}(\mu)\big]$$
and therefore
\begin{equation}
 \max\;\Spec\left(\sqrt{V_\mu}\frac1{D_0+1-\eps}\sqrt{V_\mu}\right)\geq \nu_{\rm max}(\mu)
 \label{eq:lower_bound_max_Spec}
\end{equation}
for all $0<\eps<2$. If $\lambda_1(D_0-tV_\mu)$ approaches $-1$ before $t$ reaches $1/\nu_{\rm max}(\mu)$, then the Birman-Schwinger principle provides 
\begin{equation}
\frac1{\nu_1(\mu)}=\lim_{\eps\to0^+}\max\;\Spec\left(\sqrt{V_\mu}\frac1{D_0+1-\eps}\sqrt{V_\mu}\right).
\label{eq:lim_eps}
\end{equation}
If $t$ reaches $1/\nu_{\rm max}(\mu)$ before the eigenvalue touches $-1$, then necessarily
\begin{equation}
\lim_{\eps\to0^+}\max\;\Spec\left(\sqrt{V_\mu}\frac1{D_0+1-\eps}\sqrt{V_\mu}\right)\leq \nu_{\rm max}(\mu).
\label{eq:lim_eps2}
\end{equation}
but since the other inequality holds by~\eqref{eq:lower_bound_max_Spec} we see that there must be equality. We thus conclude that~\eqref{eq:lim_eps} holds for every probability measure $\mu$. 

Using Proposition~\ref{prop:nu_1} to estimate the right side of ~\eqref{eq:lim_eps} we obtain the upper bound 
\begin{equation}
\frac1{\nu_1(\mu)}\leq  \norm{\sqrt{V_\mu}\frac{1}{\alpha\cdot p}\sqrt{V_\mu}}_{L^2(\R^3,\C^4)\to L^2(\R^3,\C^4)}.
\label{eq:estim_nu_1_mu}
\end{equation}
On the other hand, we have 
$$\pscal{\sqrt{V_\mu}\Psi,\frac1{D_0+1-\eps}\sqrt{V_\mu}\Psi}\underset{\eps\to0^+}\longrightarrow \pscal{\sqrt{V_\mu}\Psi,\frac1{D_0+1}\sqrt{V_\mu}\Psi}$$
for any $\Psi$ in the form domain of $\sqrt{V_\mu}(D_0+1)^{-1}\sqrt{V_\mu}$, as discussed in Remark~\ref{rmk:form_domain}. By the same argument as for~\eqref{eq:argument_norm_wlsc} this implies again that 
$$\lim_{\eps\to0^+}\max\;\Spec\left(\sqrt{V_\mu}\frac1{D_0+1-\eps}\sqrt{V_\mu}\right)\geq \max\;\Spec\left(\sqrt{V_\mu}\frac1{D_0+1}\sqrt{V_\mu}\right).$$
Therefore  we have shown that 
$$\max\;\Spec\left(\sqrt{V_\mu}\frac1{D_0+1}\sqrt{V_\mu}\right)\leq \frac1{\nu_1(\mu)}\leq  \norm{\sqrt{V_\mu}\frac{1}{\alpha\cdot p}\sqrt{V_\mu}}_{L^2(\R^3,\C^4)\to L^2(\R^3,\C^4)}$$
for every probability measure $\mu$. After maximizing over $\mu$ we obtain 
\begin{multline}
\sup_{\substack{\mu\geq0\\ \mu(\R^3)=1}} \max\;\Spec\left(\sqrt{V_\mu}\frac{1}{D_0+1}\sqrt{V_\mu}\right)\\
\leq \frac1{\nu_1}\leq \sup_{\substack{\mu\geq0\\ \mu(\R^3)=1}} \norm{\sqrt{V_\mu}\frac{1}{\alpha\cdot p}\sqrt{V_\mu}}_{L^2(\R^3,\C^4)\to L^2(\R^3,\C^4)}.
\label{eq:lb}
\end{multline}
To show that there is equality in~\eqref{eq:lb}, we write 
\begin{equation*}
\sqrt{V_\mu}\frac{1}{\alpha\cdot p+\eps(\beta+1)}\sqrt{V_\mu}\\
=\sqrt{V_\mu}\frac{1}{\alpha\cdot p}\sqrt{V_\mu}+\eps(\beta-1)\sqrt{V_\mu}\frac{1}{|p|^2}\sqrt{V_\mu}. 
\end{equation*}
Next, we use that if $A=A^*$ and $B=B^*$ are two self-adjoint operators with $B$ bounded and $A=A^*$ is non-negative (but possibly unbounded), then 
\begin{equation}
\lim_{\eps\to0^+}\max\;\Spec(B-\eps A)=\max\;\Spec(B).
\label{eq:general_CV_A_B}
\end{equation}
The upper bound is obvious since $B-\eps A\leq B$ whereas the lower bound is obtained from the variational characterization~\eqref{eq:variational_max_spec} of the maximum of the spectrum. Namely, by density of $D(A)$ for any $\eta>0$ we can find a normalized vector $v\in D(A)$ so that $\pscal{v,Bv}\geq \max\Spec(B)-\eta$ and then 
$$\liminf_{\eps\to0^+}\max\Spec(B-\eps A)\geq \lim_{\eps\to0^+}\left(\pscal{v,Bv}-\eps\pscal{v,Av}\right)\geq \max\Spec(B)-\eta.$$
The claim follows after taking $\eta\to0$. We therefore obtain that 
\begin{multline*}
\lim_{\eps\to0^+}\max\;\Spec\left(\sqrt{V_\mu}\frac{1}{\alpha\cdot p+\eps(\beta+1)}\sqrt{V_\mu}\right)\\
=\max\;\Spec\left(\sqrt{V_\mu}\frac{1}{\alpha\cdot p}\sqrt{V_\mu}\right)=\norm{\sqrt{V_\mu}\frac{1}{\alpha\cdot p}\sqrt{V_\mu}}
\end{multline*}
where the last equality follows from the symmetry of the spectrum. But the left side is unitarily equivalent to $\sqrt{V_{\mu_\eps}}(D_0+1)^{-1}\sqrt{V_{\mu_\eps}}$ with $\mu_\eps=\eps^{-3}\mu(\eps^{-1}\cdot)$ and hence
$$\norm{\sqrt{V_\mu}\frac{1}{\alpha\cdot p}\sqrt{V_\mu}}\leq \sup_{\substack{\mu'\geq0\\ \mu'(\R^3)=1}} \max\;\Spec\left(\sqrt{V_{\mu'}}\frac{1}{D_0+1}\sqrt{V_{\mu'}}\right).$$
This shows that there are only equalities in~\eqref{eq:lb} and proves the second and third equalities in~\eqref{eq:formula_nu_1}. To conclude, it remains to notice that 
$$\frac1{\alpha\cdot p}=\begin{pmatrix}0&\frac1{\sigma\cdot p}\\\frac1{\sigma\cdot p}&0\\\end{pmatrix}$$
hence the norm of the operator in $L^2(\R^3,\C^4)$ is the same as the one of the off-diagonal term in $L^2(\R^3,\C^2)$:
$$\norm{\sqrt{V_\mu}\frac{1}{\alpha\cdot p}\sqrt{V_\mu}}_{L^2(\R^3,\C^4)\to L^2(\R^3,\C^4)}=\norm{\sqrt{V_\mu}\frac{1}{\sigma\cdot p}\sqrt{V_\mu}}_{L^2(\R^3,\C^2)\to L^2(\R^3,\C^2)}.$$
This provides the first equality in~\eqref{eq:formula_nu_1}, hence concludes the proof of Theorem~\ref{thm:critical_nu's}.\qed

\section{Existence of an optimal measure}\label{sec:proof_existence}

Our goal in this section is to prove Theorem~\ref{thm:existence_optimal_measure} on the existence of an optimal measure $\mu_\nu$ for $\lambda_1( \nu)$ in~\eqref{eq:min_lambda_1} and its properties. The difficulty is that the first eigenvalue which we minimize depends in a non trivial way on the measure $\mu$.  We will use the concentration-compactness method~\cite{Lions-84,Lions-84b,Lions-85a,Lions-85b,Struwe,Lewin-conc-comp} in order to prove that a minimizing sequence $\{\mu_n\}$ is necessarily tight and converges in the sense of measures (up to translations and extraction of a subsequence) to a minimizer $\mu_\nu$. This requires to eliminate the possible lack of compactness due to the space translations. The main idea of the proof is that \emph{local} compactness follows from the condition that the largest considered mass $\nu$ is strictly below $\nu_1$, which prevents the eigenvalue from approaching the lower continuous spectrum. On the other hand, if we think that the minimizing sequence $\mu_n$ splits into several compact bubbles far away from each other, the eigenvalue $\lambda_1(D_0-\mu_n\ast|x|^{-1})$ will be given by the \emph{smallest eigenvalue} of these bubbles. Should there be several bubbles, the mass of the bubble giving the lowest eigenvalue has to be strictly less than $\nu$. But then it cannot be optimal since we always do better by increasing it a bit and this is how we will prove that a minimizing sequence is necessarily tight, up to translations. 

In order to facilitate the writing of our proof, we will start by proving in Section~\ref{sec:vague_cont} that $\mu\mapsto \lambda_1(D_0-\mu\ast|x|^{-1})$ is weakly-$\ast$ continuous for the vague topology, that is, the one in duality with continuous functions tending to zero at infinity. This is one main ingredient for the proof of the existence in Theorem~\ref{thm:existence_optimal_measure}, which is then provided in Section~\ref{sec:proof_thm_existence}. Another important ingredient will be a unique continuation principle for the Dirac operator, the details of which are however gathered in Appendix~\ref{app:unique_continuation} for the convenience of the reader. 

\subsection{Continuity of the first eigenvalue for the vague topology}\label{sec:vague_cont}

In this section we will prove the weak continuity of the map $\mu\mapsto \lambda_1(D_0-\mu\ast|x|^{-1})$, using the concentration-compactness method. But before we provide the detailed statement, we prove two preliminary results which are going to be useful in the argument. The first will be used to deal with the case of \emph{vanishing sequences} $\{\mu_n\}$ which contain no compact bubble at all and have all of their mass disappearing locally. 

\begin{lemma}[Estimate in terms of the largest local mass]\label{lem:vanishing}
Let $\mu$ be a non-negative finite measure over $\R^3$. Then there exists a universal constant $C$ such that 
\begin{equation}
\norm{V_\mu \frac{1}{D_0}}\leq C\sup_{x\in\R^3}\mu(B_R(x))+\frac{C\mu(\R^3)}{R}
\end{equation}
for all $R\geq1$, where $B_R(x)$ is the ball of radius $R$ centered at $x\in\R^3$. 
\end{lemma}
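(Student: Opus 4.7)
The plan is to split $V_\mu$ into a near-field and a far-field part at scale $R$ and bound each separately against $D_0^{-1}$. First fix a smooth cutoff $\chi\in C^\infty_c(\R^3)$ with $\chi\equiv1$ on $B_1$ and $\chi\equiv0$ outside $B_2$, set $\chi_R(\cdot)=\chi(\cdot/R)$, and write $V_\mu=V_\mu^{\rm near}+V_\mu^{\rm far}$ where $V_\mu^{\rm far}(x)=\int (1-\chi_R(x-y))|x-y|^{-1}d\mu(y)$. Since the integrand is bounded pointwise by $1/R$, the far part satisfies $\|V_\mu^{\rm far}\|_\infty\leq \mu(\R^3)/R$, and because $|D_0|\geq 1$ gives $\|D_0^{-1}\|\leq 1$, we immediately obtain $\|V_\mu^{\rm far}\,D_0^{-1}\|\leq \mu(\R^3)/R$.

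For the near part I will use the identity $\|u\|_{H^1}=\|D_0 u\|$ (which follows from $D_0^2=1-\Delta$) to recast the claim as the multiplier bound $\|V_\mu^{\rm near}\|_{H^1\to L^2}\leq C L$, where $L:=\sup_x \mu(B_R(x))$. I cover $\R^3$ by balls $B_{2R}(x_k)$ of bounded overlap $N$ and take a subordinate smooth partition $\{\chi_k^2\}$ with $\sum\chi_k^2=1$ and $\|\nabla\chi_k\|_\infty\leq C/R$. For $x\in\mathrm{supp}(\chi_k)\subset B_{2R}(x_k)$ the integration in $V_\mu^{\rm near}(x)$ ranges over $|x-y|\leq 2R$, hence $y\in B_{4R}(x_k)$, so pointwise
\[
\chi_k(x)\,V_\mu^{\rm near}(x)\leq \chi_k(x)\,V_{\mu_k}(x),\qquad \mu_k:=\mu|_{B_{4R}(x_k)}.
\]
A covering argument bounds $\mu_k(\R^3)=\mu(B_{4R}(x_k))\leq C L$.

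The convolution bound I will use is $\|V_{\mu_k}v\|_{L^2}\leq 2\mu_k(\R^3)\|\nabla v\|_{L^2}$, obtained by applying Minkowski's integral inequality to $V_{\mu_k}v(x)=\int v(x)|x-y|^{-1}d\mu_k(y)$ and Hardy's inequality $\||y-\cdot|^{-1}v\|_{L^2}\leq 2\|\nabla v\|_{L^2}$ to each slice. Partitioning and inserting the pointwise comparison gives
\[
\|V_\mu^{\rm near}u\|_{L^2}^2=\sum_k\int\chi_k^2|V_\mu^{\rm near}|^2|u|^2\leq\sum_k\|V_{\mu_k}\chi_ku\|_{L^2}^2\leq 4C^2L^2\sum_k\|\nabla(\chi_ku)\|_{L^2}^2.
\]
An IMS-type expansion $\sum_k\|\nabla(\chi_ku)\|^2\leq 2\|\nabla u\|^2+2\int(\sum_k|\nabla\chi_k|^2)|u|^2$ together with $\sum_k|\nabla\chi_k|^2\leq CN/R^2\leq CN$ (this is where $R\geq 1$ enters) yields $\sum_k\|\chi_k u\|_{H^1}^2\leq C'\|u\|_{H^1}^2$, so $\|V_\mu^{\rm near}\,D_0^{-1}\|\leq CL$; combining with the far-field estimate completes the proof.

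The delicate point is the localization step that replaces $V_\mu^{\rm near}$ by the truncated potential $V_{\mu_k}$ whose charge is controlled uniformly by the local mass; once this is done cleanly, Hardy's inequality does the analytic work and the rest is bookkeeping about the partition of unity and the identification $\|u\|_{H^1}=\|D_0 u\|$.
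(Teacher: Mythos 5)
Your proof is correct and reaches the same bound, but it organizes the analysis differently from the paper. The paper estimates the operator $V_\mu\,|D_0|^{-1}$ directly via pointwise bounds on the kernel of $(1-\Delta)^{-1/2}$, namely $(1-\Delta)^{-1/2}(x,y)\leq C\,e^{-|x-y|}/|x-y|^2$, splits that kernel into a near region $B_{4R}(Rj)$ and a far region, and then applies Hardy's inequality at the operator level together with the bounded overlap of the cubes $Rj+(-R,R)^3$. You instead truncate the Coulomb kernel itself to produce a near field $V_\mu^{\rm near}$ and a far field bounded pointwise by $\mu(\R^3)/R$; then you convert the near-field bound into a multiplier estimate $V_\mu^{\rm near}\colon H^1\to L^2$ using the exact identity $\|D_0 u\|_{L^2}=\|u\|_{H^1}$, Minkowski's integral inequality in the measure variable, and Hardy's inequality slicewise, finishing with a standard IMS expansion for the partition of unity. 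Both routes rest on the same three ingredients --- a spatial decomposition at scale $R$, a $\mu(\R^3)/R$ control of the far part, and Hardy's inequality with constant proportional to the local mass $\sup_x\mu(B_R(x))$ --- but yours bypasses the explicit free-resolvent kernel bounds and the Schur-type test entirely, replacing them with the more elementary $H^1\to L^2$ duality for the Coulomb potential. This is a legitimate trade: the paper's kernel method fits its surrounding toolbox (similar kernel manipulations appear in Lemmas~\ref{lem:essential_spectrum} and~\ref{lem:CV_K_R_eta_eps}), while your version is self-contained and perhaps more transparent about where each of the two terms in the conclusion comes from. One small remark: the IMS inequality you wrote, $\sum_k\|\nabla(\chi_k u)\|^2\leq 2\|\nabla u\|^2+2\int(\sum_k|\nabla\chi_k|^2)|u|^2$, is actually an equality without the factors of $2$ when $\sum_k\chi_k^2\equiv1$ (the cross terms cancel), but your slightly lossy estimate is of course still valid and suffices here.
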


\begin{proof}[Proof of Lemma~\ref{lem:vanishing}]
Let us consider a partition of unity $\sum_{j\in\Z^3}\chi_j=1$ of $\R^3$ with each $\chi_j\in C^\ii_c(\R^3)$ supported over the cube $j+(-1,1)^3$, for instance $\chi_j=\1_{(-1/2,1/2)^3}\ast\zeta(x-j)$ for a given $\zeta\in C^\ii_c(\R^3)$ with $\int_{\R^3}\zeta=1$, of small support. Let $\chi_{R,j}(x):=\chi_j(x/R)$ be the dilated partition of unity. Arguing as in the proof of~\cite[Lem.~8]{EstLewSer-20a_ppt}, we write 
\begin{align*}
\chi_{R,j}V_\mu&=\chi_{R,j}V_{\mu\1_{B_{4R}(Rj)}}+\chi_{R,j}V_{\mu\1_{\R^3\setminus B_{4R}(Rj)}}\\
&\leq \chi_{R,j}V_{\mu\1_{B_{4R}(Rj)}}+\frac{C\mu(\R^3)}{R}\chi_{R,j} 
\end{align*}
where $C$ depends on the smallest distance between the points on the sphere of radius $4$ and that on the cube of side length $2$. This gives
$$0\leq V_\mu\leq \sum_{j\in\Z^3}\chi_{R,j}V_{\mu\1_{B_{4R}(Rj)}}+\frac{C\mu(\R^3)}{R} $$
hence
$$\norm{V_\mu\frac1{\sqrt{1-\Delta}}}\leq \norm{\sum_{j\in\Z^3}\chi_{R,j}V_{\mu\1_{B_{4R}(Rj)}}\frac1{\sqrt{1-\Delta}}}+\frac{C\mu(\R^3)}{R}.$$
To estimate the first norm we write
\begin{multline*}
\sum_{j\in\Z^3}\chi_{R,j}V_{\mu\1_{B_{4R}(Rj)}}\frac1{\sqrt{1-\Delta}}\\=\sum_{j\in\Z^3}\chi_{R,j}V_{\mu\1_{B_{4R}(Rj)}}\frac1{\sqrt{1-\Delta}}\left(\1_{B_{4R}(Rj)}+\1_{\R^3\setminus B_{4R}(Rj)}\right) 
\end{multline*}
and estimate the corresponding positive kernels pointwise. Using that 
$$\frac1{\sqrt{1-\Delta}}(x-y)\leq C\frac{e^{-|x-y|}}{|x-y|^2}$$
we obtain
\begin{multline*}
\sum_{j\in\Z^3}\chi_{R,j}V_{\mu\1_{B_{4R}(Rj)}}\frac1{\sqrt{1-\Delta}}(x,y)\\
\leq \sum_{j\in\Z^3}\1_{B_{4R}(Rj)}(x)\frac{V_{\mu\1_{B_{4R}(Rj)}}(x)}{|x-y|^2}\1_{B_{4R}(Rj)}(y)
+\frac{C\mu(\R^3)}{R^3} e^{-|x-y|}.
\end{multline*}
By Hardy's inequality and the fact that $\sum_{j\in\Z^3}\1_{B_{4R}(Rj)}\leq C$, this proves that 
$$\norm{\sum_{j\in\Z^3}\chi_{R,j}V_{\mu\1_{B_{4R}(Rj)}}\frac1{\sqrt{1-\Delta}}(x,y)}\leq C\sup_{x\in\R^3}\mu\big(B_{4R}(x)\big)+\frac{C\mu(\R^3)}{R^3}$$
and concludes the proof.
\end{proof}

The second lemma is about the convergence of the Coulomb potential $V_{\mu_n}$ in the case where $\mu_n$ converges tightly or vaguely to a limit. 

\begin{lemma}[Convergence of the potential]\label{lem:CV_V_n}
Let $\mu_n\wto\mu$ be a sequence of measures which converges tightly. Then the associated potential $V_{\mu_n}=\mu_n\ast|x|^{-1}$ converges to $V_\mu=\mu\ast|x|^{-1}$ strongly in $(L^2+L^\ii)(\R^3)$, hence also almost everywhere after extraction of a subsequence. In particular, we have the norm convergence
$$\sqrt{V_{\mu_n}}\frac{1}{D_0-\lambda}\longrightarrow \sqrt{V_{\mu}}\frac{1}{D_0-\lambda}$$
for every $\lambda\in(-1,1)$, uniformly on compact subsets of $(-1,1)$. 

If $\mu_n\wto\mu$ converges vaguely (but not tightly), then we still have $V_{\mu_n}(x)\to V_\mu(x)$ strongly in $L^2_{\rm loc}(\R^3)$, hence also almost-everywhere after extraction of a subsequence.
\end{lemma}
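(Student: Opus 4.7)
The plan is to decompose the Coulomb kernel as a sum of an $L^2$ part and an $L^\infty$ part, namely $|x|^{-1} = f(x) + g(x)$ with $f = \1_{|x|\le 1}/|x|$ and $g = \1_{|x|>1}/|x|$, and to treat each piece by a different tool: Plancherel's identity for the short-range singularity and Arzel\`a--Ascoli for the long-range tail. Once $V_{\mu_n} \to V_\mu$ is proved in $L^2 + L^\infty$, the claimed resolvent norm convergence follows from the elementary inequality $|\sqrt{a}-\sqrt{b}|^2 \le |a-b|$ combined with Sobolev embedding. The delicate step, I expect, is the $L^\infty$ convergence of $(\mu_n - \mu)\ast g$: the function $g$ has a jump on the sphere $|x|=1$, and tight convergence of measures does not directly yield uniform convergence of convolutions against a discontinuous function, so some regularization plus an equicontinuity argument is needed.

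For the $L^2$ convergence of $(\mu_n - \mu) \ast f$ under tightness, a direct computation by spherical symmetry gives $\hat f(\xi) = 4\pi(1-\cos|\xi|)/|\xi|^2$, so that $|\hat f|^2$ is bounded near the origin (since $1-\cos|\xi| \sim |\xi|^2/2$) and decays like $|\xi|^{-4}$ at infinity, hence $|\hat f|^2 \in L^1(\R^3)$. By Plancherel,
\[
\|(\mu_n - \mu) \ast f\|_{L^2}^2 = \int_{\R^3} |\hat f(\xi)|^2 \, |\widehat{\mu_n}(\xi)-\hat\mu(\xi)|^2 \, d\xi.
\]
Tightness implies $\widehat{\mu_n}(\xi) \to \hat\mu(\xi)$ pointwise (test against the bounded continuous function $y \mapsto e^{-iy\cdot\xi}$) with $|\widehat{\mu_n}-\hat\mu| \le 2\sup_n \mu_n(\R^3)$, so dominated convergence gives the claim. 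For the $L^\infty$ part, regularize the jump: pick $\eta \in (0,1)$ and let $g_\eta \in C_0(\R^3)$ coincide with $g$ outside the spherical shell $1-\eta < |x| < 1+\eta$, so that $\|g-g_\eta\|_{L^2} = O(\sqrt\eta)$. Young's inequality for measures gives $\|(\mu_n - \mu)\ast(g-g_\eta)\|_{L^2} = O(\sqrt\eta)$ uniformly in $n$. On the other hand, $(\mu_n - \mu) \ast g_\eta$ is equicontinuous in $x$ with modulus inherited from $g_\eta$ times the uniform mass bound, it decays to zero at infinity uniformly in $n$ (by tightness applied to the translates of $g_\eta \in C_0$), and it converges to zero pointwise; Arzel\`a--Ascoli then forces it to zero in $L^\infty$. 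Writing $V_{\mu_n} - V_\mu = (\mu_n-\mu)\ast(f + g - g_\eta) + (\mu_n-\mu) \ast g_\eta$ and sending $\eta \to 0$ yields the $L^2 + L^\infty$ convergence; almost-everywhere convergence along a subsequence then follows automatically.

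The vague case is reduced to the tight one by localization. Given a compact $K \subset B_{R_0}$, choose $R > 2R_0$ with $\mu(\partial B_R) = 0$ and split $\mu_n = \mu_n^{\rm in} + \mu_n^{\rm out}$ along $B_R$. The inner part converges tightly to $\mu\1_{B_R}$, so the first part of the lemma gives $V_{\mu_n^{\rm in}} \to V_{\mu\1_{B_R}}$ in $L^2+L^\infty$, hence a fortiori in $L^2(K)$. For $x \in K$ and $y \in B_R^c$ one has $|x - y| \ge R/2$, hence $V_{\mu_n^{\rm out}}(x) \le 2\mu_n(\R^3)/R$ uniformly in $n$, and similarly for the outer part of $\mu$. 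Therefore $\limsup_n \|V_{\mu_n} - V_\mu\|_{L^2(K)} \le C|K|^{1/2}/R$, and letting $R \to \infty$ gives the $L^2_{\rm loc}$ convergence.

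For the operator-norm statement, the pointwise inequality $|\sqrt{V_{\mu_n}(x)} - \sqrt{V_\mu(x)}|^2 \le |V_{\mu_n}(x) - V_\mu(x)|$ yields, for every $u \in L^2(\R^3,\C^4)$,
\[
\bigl\|(\sqrt{V_{\mu_n}} - \sqrt{V_\mu})(D_0-\lambda)^{-1} u\bigr\|_{L^2}^2 \le \int_{\R^3} |V_{\mu_n} - V_\mu| \; |(D_0-\lambda)^{-1} u|^2 \, dx.
\]
Writing $V_{\mu_n} - V_\mu = A_n + B_n$ with $\|A_n\|_{L^2} + \|B_n\|_{L^\infty} \to 0$, the $B_n$-term is bounded by $\|B_n\|_\infty \|(D_0-\lambda)^{-1}u\|_{L^2}^2 \le C_\lambda \|B_n\|_\infty \|u\|_{L^2}^2$ with $C_\lambda = (1-|\lambda|)^{-2}$, while the $A_n$-term is bounded by Cauchy--Schwarz by $\|A_n\|_{L^2} \|(D_0-\lambda)^{-1}u\|_{L^4}^2$. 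Since $(D_0-\lambda)^{-1}$ maps $L^2(\R^3,\C^4)$ boundedly into $H^1 \hookrightarrow L^6(\R^3,\C^4)$, interpolation between $L^2$ and $L^6$ gives $\|(D_0-\lambda)^{-1}u\|_{L^4} \le C_\lambda \|u\|_{L^2}$ with $C_\lambda$ locally bounded on $(-1,1)$. Taking the supremum over $\|u\|_{L^2}=1$ yields the norm convergence, uniformly on compact subsets of $(-1,1)$.
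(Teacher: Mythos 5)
Your proof is correct, but it follows a genuinely different route from the paper in two places. For the tight case, the paper works entirely in Fourier space: it writes $\widehat{V_{\mu_n}}(k)-\widehat{V_\mu}(k)=4\pi(\widehat{\mu_n}(k)-\widehat\mu(k))/|k|^2$, observes that $|k|^{-2}\1_{B_1}(k)\in L^1$ and $|k|^{-2}\1_{\R^3\setminus B_1}(k)\in L^2$, and applies dominated convergence to both pieces; the low-frequency piece then gives the $L^\infty$ part and the high-frequency piece the $L^2$ part of the decomposition. You instead split the kernel $|x|^{-1}$ in physical space into a compactly supported $L^2$ singularity $f$ and an $L^\infty$ tail $g$, handle $f$ by Plancherel (arriving at the same dominated-convergence step via the explicit formula for $\hat f$), and then handle $g$ with a regularization-plus-Arzel\`a--Ascoli argument that is more laborious than the paper's one-line use of the Riemann--Lebesgue-type fact that an $L^1$ Fourier transform gives an $L^\infty$ function. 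Both are valid; the paper's split is noticeably shorter because it sidesteps the discontinuity of $g$ entirely. For the resolvent bound, the paper uses a Kato--Seiler--Simon estimate, namely $\|f(x)(D_0-\lambda)^{-1}\|\leq\min\bigl(C\|f\|_{L^\infty},\, C'\|f\|_{L^4}\bigr)$, applied to $f=\sqrt{V_{\mu_n}}-\sqrt{V_\mu}$ (whose $L^4$ and $L^\infty$ norms are controlled by $\|A_n\|_{L^2}^{1/2}$ and $\|B_n\|_{L^\infty}^{1/2}$ via $|\sqrt a-\sqrt b|^2\leq|a-b|$). Your version replaces the Schatten estimate by the Sobolev embedding $H^1\hookrightarrow L^6$ and interpolation, which achieves exactly the same quantitative control; this is perhaps slightly more elementary, requiring no trace-ideal input. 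The vague-to-local reduction differs in style (you take a fixed $R$ and send $R\to\infty$ at the end, the paper takes a slowly growing $r_n\to\infty$ so that $\mu_n\1_{B_{r_n}}$ is tight), but these are interchangeable, provided one is careful, as you are, to pick $R$ with $\mu(\partial B_R)=0$ so that the inner truncation does converge tightly.
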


\begin{proof}[Proof of Lemma~\ref{lem:CV_V_n}]
The tight convergence $\mu_n\wto\mu$ implies that the Fourier transforms $\widehat{\mu_n}(k)\to \widehat{\mu}(k)$ converge for all $k\in\R^3$. The Fourier transform of the corresponding potential can be written in the form
\begin{align*}
\widehat{V_{\mu_n}}(k)-\widehat{V_{\mu}}(k)&=4\pi\frac{\widehat{\mu_n}(k)-\widehat{\mu}(k)}{|k|^2}\\
&=4\pi\frac{\widehat{\mu_n}(k)-\widehat{\mu}(k)}{|k|^2}\1_{B_1}(k)+4\pi\frac{\widehat{\mu_n}(k)-\widehat{\mu}(k)}{|k|^2}\1_{\R^3\setminus B_1}(k) 
\end{align*}
where the first term is in $L^1(B_1)$ and the second in $L^2(\R^3\setminus B_1)$. From the dominated convergence theorem (using that $\widehat{\mu_n}$ is uniformly bounded) we infer that $V_{\mu_n}\to V_\mu$ strongly in $(L^2+L^\ii)(\R^3)$, hence in $L^2_{\rm loc}(\R^3)$. The last part of the statement follows from the inequality
\begin{multline*}
\norm{f(x)\frac{1}{D_0-\lambda}}\\
\leq \min\left(\frac{\norm{f}_{L^\ii}}{\min(|\lambda-1|,|\lambda+1|)},\frac{\norm{f}_{L^4}}{(2\pi)^{3}}\left(\int_{\R^3}\frac{dp}{|\alpha\cdot p+\beta-\lambda|^4}\,dp\right)^{\frac14}\right).
\end{multline*}
Finally, if we have $\mu_n\wto\mu$ vaguely (but not tightly), then we may always choose a radius $r_n$ diverging to infinity sufficiently slowly so that $\mu_n(B_{r_n})\to\mu(\R^3)$. Then $\mu_n\1_{B_{r_n}}$ converges tightly and on any fixed ball $B_R$ we have 
$$\1_{B_R}\left|V_{\mu_n}-V_{\mu_n\1_{B_{r_n}}}\right|=\1_{B_R}\left|V_{\mu_n\1_{\R^3\setminus B_{r_n}}}\right|\leq \frac{\mu_n(\R^3)}{r_n-R}\to0.$$
The local convergence therefore follows from the tight case.
\end{proof}

With these two results at hand, we will now be able to provide the proof of the following result, which is going to be one main ingredient for proving Theorem~\ref{thm:existence_optimal_measure}.

\begin{prop}[Weak continuity]\label{prop:weak_continuity}
Let $0\leq \nu<\nu_1$ and $\{\mu_n\}$ be an arbitrary sequence of non-negative measures such that $\mu_n(\R^3)\leq\nu$. Then there exists a subsequence $\{\mu_{n_k}\}$, a sequence of space translations $\{x_k\}\subset\R^3$ and a measure $\mu$ so that $\mu_{n_k}(\cdot+x_k)\wto \mu$ vaguely and 
$$\lim_{k\to\ii}\lambda_1\left(D_0-\mu_{n_k}\ast\frac1{|x|}\right)=\lambda_1\left(D_0-\mu\ast\frac1{|x|}\right).$$
\end{prop}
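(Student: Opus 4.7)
The strategy is to apply a concentration-compactness analysis to the bounded sequence of non-negative measures $\{\mu_n\}$, identify the bubble that carries the lowest eigenvalue, and translate the sequence to that bubble before passing to the vague limit. Since $\mu_n(\R^3)\leq \nu$ is bounded, after extracting a subsequence we obtain, in the spirit of~\cite{Lions-84,Lions-84b}, a decomposition
\[
\mu_n = \sum_{j=1}^{J} \mu_n^{(j)} + r_n,
\]
where each piece $\mu_n^{(j)}$ is essentially localized near a point $x_n^{(j)}$ with $|x_n^{(j)}-x_n^{(k)}|\to\infty$ for $j\neq k$, the translated measure $\mu_n^{(j)}(\cdot+x_n^{(j)})$ converges tightly to a non-trivial measure $\mu^{(j)}$, and the residual satisfies $\sup_{x\in\R^3} r_n(B_R(x))\to 0$ for every fixed $R$. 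Here $J\in\{0,1,2,\dots\}\cup\{\infty\}$, and $\sum_j\mu^{(j)}(\R^3)\leq \nu$.

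The key spectral claim is the splitting formula
\[
\lim_{n\to\infty}\lambda_1\left(D_0-V_{\mu_n}\right)=\min_{1\leq j\leq J}\lambda_1\left(D_0-V_{\mu^{(j)}}\right),
\]
with the convention that the right-hand side equals $1$ when $J=0$ (pure vanishing, where the whole mass dissipates). For the upper bound I would insert a trial upper spinor $\phi$ localized near $x_n^{(j^\star)}$ into the min-max formula~\eqref{eq:min-max}, where $j^\star$ achieves the minimum; after translation this spinor approximately realizes $\lambda_1(D_0-V_{\mu^{(j^\star)}})$, and the far-away bubbles and the residual $r_n$ produce only a vanishing perturbation on the quadratic form~\eqref{eq:def_q_lambda}. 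Lemma~\ref{lem:CV_V_n} supplies the uniform-on-compact-$\lambda$ norm convergence of $\sqrt{V_{\mu_n^{(j)}(\cdot+x_n^{(j)})}}(D_0-\lambda)^{-1}$, and Lemma~\ref{lem:vanishing} shows that $V_{r_n}(D_0)^{-1}\to 0$ in operator norm since both $\sup_x r_n(B_R(x))\to 0$ and one can take $R\to\infty$ at the end. The lower bound is the more delicate direction: I would use an IMS-type partition of unity subordinated to the bubble locations together with the Birman--Schwinger characterization, so that the eigenvalue of $D_0-V_{\mu_n}$ can be bounded from below by the minimum of the bubble Birman--Schwinger norms plus an error that vanishes by Lemmas~\ref{lem:vanishing} and~\ref{lem:CV_V_n}.

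The hypothesis $\nu<\nu_1$ enters crucially here: each $\mu^{(j)}(\R^3)\leq \nu<\nu_1$, so by definition of $\nu_1$ and by Theorem~\ref{thm:critical_nu's} the first min-max level $\lambda_1(D_0-V_{\mu^{(j)}})$ stays strictly above $-1$; equivalently, the operators $\sqrt{V_{\mu^{(j)}}}(D_0-\lambda)^{-1}\sqrt{V_{\mu^{(j)}}}$ have operator norm bounded away from $1$ uniformly for $\lambda$ in a neighborhood of $-1$. This uniformity is what allows one to transfer Birman--Schwinger estimates from the limit bubbles back to $\mu_n$ without the eigenvalue sliding off the spectral gap. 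Once the splitting formula is proved, one picks $j^\star$ achieving the minimum, sets $x_k:=x_{n_k}^{(j^\star)}$ along a suitable subsequence, and checks that $\mu_{n_k}(\cdot+x_k)\wto \mu:=\mu^{(j^\star)}$ vaguely: the chosen bubble converges tightly to $\mu^{(j^\star)}$ by construction, the other bubbles escape to spatial infinity and thus vanish in the vague topology, and the residual vanishes uniformly.

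The main obstacle I expect is the lower bound in the splitting formula, since the min-max is an inf-sup and the lower direction must cope with the long-range, non-local character of the Coulomb potential: a bubble at $x_n^{(j)}$ generates a potential $|x-x_n^{(j)}|^{-1}\mu^{(j)}(\R^3)$ of order $1/|x_n^{(j)}-x_n^{(k)}|$ near another bubble $k$, which is small but must be controlled in the weighted norm~\eqref{eq:norm_V} defining $\cV_{\mu_n}$. Handling these cross-terms in the quadratic form $q_\lambda$ of~\eqref{eq:def_q_lambda}, while keeping $\lambda$ near the infimum, is where the bulk of the technical work will lie; the decomposition of the operator used in Lemma~\ref{lem:essential_spectrum} (partition of unity around each atom and far field) provides the natural template.
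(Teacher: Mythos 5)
Your high-level blueprint (concentration-compactness decomposition, identify the bubble carrying the lowest eigenvalue, translate, use $\nu<\nu_1$ to forbid the eigenvalue from diving into $-1$) is the right one, and your upper bound via localized trial spinors inserted into the min-max formula is essentially what the paper does in its Step~4. However, the central technical step---showing that $\ell:=\lim_n\lambda_1(D_0-V_{\mu_n})>-1$, which you correctly flag as the hard direction---is not handled by the route you propose, and I think it would fail as stated. You propose ``an IMS-type partition of unity \emph{together with the Birman--Schwinger characterization},'' and you assert that the norm bounds on $\sqrt{V_{\mu^{(j)}}}(D_0-\lambda)^{-1}\sqrt{V_{\mu^{(j)}}}$ for $\lambda$ near $-1$ transfer from the limit bubbles back to $\mu_n$. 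Two problems arise. First, the Birman--Schwinger operator $K_\lambda$ becomes singular as $\lambda\to-1$; obtaining uniform-in-$\lambda$ control of $\|K_\lambda\|<1$ near $-1$ is not something one gets from the bubbles' properties directly---it is essentially \emph{equivalent} to the statement that the eigenvalue stays away from $-1$, so the argument is circular. Second, ``transferring'' norm bounds from the limit pieces $\mu^{(j)}$ to $\mu_n$ is not automatic: for a vague (non-tight) limit you only have strong, not norm, resolvent convergence, and a strong limit does not control $\|K_\lambda^{(n)}\|$ from above uniformly.

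The paper circumvents this by working not at the resolvent level but at the level of the scalar quadratic form $q_\lambda$ for the upper spinor $\phi_n$ defined in~\eqref{eq:def_q_lambda}. Assuming for contradiction that $\lambda_n=-1+\eps_n$ with $\eps_n\to0$, it normalizes $\|\phi_n\|_{L^2}=1$, applies the \emph{pointwise} IMS identity~\eqref{eq:IMS} for the Pauli operator (which is exact at the quadratic-form level and produces a localization error of order $1/R$ that can be absorbed using the fact that $V_n\gtrsim 1/R$ on the relevant annulus), and then uses the Hardy-type characterization of $\nu_1$ from~\eqref{eq:nu_c_Hardy_2component},
$$\int_{\R^3}V_n|\zeta_R\phi_n|^2\,dx\le\frac{\mu_n(\R^3)^2}{\nu_1^2}\int_{\R^3}\frac{|\sigma\cdot\nabla(\zeta_R\phi_n)|^2}{V_n}\,dx,$$
precisely to obtain a coercivity inequality~\eqref{eq:estim_kinetic_phi_n} on the local part of the quadratic form. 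Since $\mu_n(\R^3)\le\nu<\nu_1$, the prefactor is strictly positive (once $\eps_n R$ is small), which yields $H^{1/2}_{\rm loc}$ bounds on $\phi_n$ via~\eqref{eq:lower_bound_cV_eps0}, local compactness, and then the contradiction that $\phi_n\to0$ in $L^2$. This is a genuinely different mechanism from Birman--Schwinger: the Hardy inequality~\eqref{eq:nu_c_Hardy_2component} is the precise tool that converts $\nu<\nu_1$ into a local compactness estimate, and your outline does not invoke it. Without it I do not see how to close the lower bound, so this is the concrete gap. (As a secondary point, your ``splitting formula'' written as an equality over all bubbles $j$ is the right intuition, but the paper avoids proving it in full generality: it first proves $\ell>-1$, then peels off bubbles one at a time until the remainder has mass $\le1/2$, and shows the minimizing eigenfunction's mass must sit on one tight piece whose eigenvalue equals $\ell$. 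Organizing the proof this way sidesteps the cross-term bookkeeping you worry about at the end.)
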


As we have mentioned above, the proof is a rather delicate application of Lions' concentration-compactness method~\cite{Lions-84,Lions-84b,Lions-85a,Lions-85b,Struwe,Lewin-conc-comp}. It will also largely rely on the min-max characterization of the eigenvalue recalled above in~\eqref{eq:min-max}. The argument thus has a strong variational flavor and does not solely rely on spectral arguments. Passing to the limit requires to prove first that the eigenvalue does not approach the bottom of the gap, that is,
$$\liminf_{n\to\ii}\lambda_1(D_0-\mu_n\ast|x|^{-1})>-1.$$
This is where we use that $\mu_n(\R^3)\leq\nu<\nu_1$ and this is the most difficult part of the proof. 

\begin{proof}[Proof of Proposition~\ref{prop:weak_continuity}]
First we notice that 
$$\lim_{n\to\ii}\lambda_1\left(D_0-V_{\mu\ast\zeta_n}\right)=\lambda_1(D_0-V_\mu)$$
for any regularizing sequence $\zeta_n\in C^\ii_c(\R^3)$ and any $0\leq\mu(\R^3)<1$. This follows from the resolvent convergence in~\cite[Thm.~1]{EstLewSer-20a_ppt}. Hence, we can always replace the original sequence $\mu_n$ by a regularized sequence, without changing the limit of the associated eigenvalue, nor the weak-$\ast$ limits of $\mu_n$. In the whole proof we assume for simplicity that $\mu_n\in C^\ii(\R^3,\R_+)$. This ensures that the domain of the corresponding Dirac operator is $H^1(\R^3)$ and allows us to carry some computations more easily. But the arguments below actually apply the same to a general measure. For the rest of the proof, we also call
$$\ell:=\lim_{n\to\ii}\lambda_1(D_0-V_{\mu_n})$$
the limit of the eigenvalues, which always exists after extraction of an appropriate (not displayed) subsequence. We split the proof of the proposition into several steps. In the first step we deal with the easy case where $\ell=1$, which contains in particular the case of ``vanishing'' sequences in the sense of concentration-compactness, as we will explain. The central argument is in Step 2 where we prove that $\ell>-1$, that is, the eigenvalue cannot approach the lower essential spectrum. In Steps 3 and 4 we will find the bubble $\mu$ which has the lowest possible eigenvalue and show that this is the limit of $\lambda_1(D_0-\mu_n\ast|x|^{-1})$. 

\medskip

\noindent $\bullet$ \textit{Step 1. First simple cases.}
If $\ell=1$ we can always find a sequence $\{x_n\}$ diverging fast enough to infinity so that $\mu_n(\cdot+x_n)\wto0=:\mu$. Since $\lambda_1(D_0-V_0)=\lambda_1(D_0)=1$, we see that the proposition is proved with $\mu=0$ in this case. In the rest of the proof we therefore assume that 
$$\ell<1.$$ 

Under this new assumption, $\mu_n(\R^3)$ cannot have a subsequence tending to 0. Otherwise using 
$$\norm{V_{\mu_{n_k}}\frac1{D_0}}\leq 2\mu_{n_k}(\R^3)\to0$$
by Hardy's inequality this would imply by the Rellich-Kato theorem that $\lambda_1(D_0-V_{n_k})\to1$. Thus we have
$$\liminf_{n\to\ii}\mu_n(\R^3)>0.$$

In fact, the sequence $\mu_n$ cannot vanish, in the sense of concentration-compactness~\cite{Lions-84,Lions-84b,Lions-85a,Lions-85b,Struwe,Lewin-conc-comp}. We recall that vanishing means that there is no asymptotic mass locally, in any ball of fixed radius $R>0$:
\begin{equation}
 \lim_{n\to\ii}\sup_{x\in\R^3}\mu_n(B_R(x))=0.
 \label{eq:mu_n_vanishes}
\end{equation}
The absence of vanishing follows from Lemma~\ref{lem:vanishing}. Indeed, when~\eqref{eq:mu_n_vanishes} we obtain $\|V_{\mu_n}D_0^{-1}\|\to0$ again and thus $\lambda_1(D_0-V_{\mu_n})\to1$, which contradicts our assumption that $\ell<1$. Thus we even have 
$$ \lim_{n\to\ii}\sup_{x\in\R^3}\mu_n(B_R(x))>0$$
for some (hence all) $R>0$. 

\medskip

\noindent $\bullet$ \textit{Step 2. Proof that $\ell>-1$.} Now we enter the core of the proof, which consists in showing that the eigenvalues $\lambda_1(D_0-V_{\mu_n})$ can never approach the bottom $-1$ of the gap, that is, $\ell>-1$. This is where we are going to use that $\mu_n(\R^3)\leq\nu<\nu_1$. We \textbf{argue by contradiction} and assume in this step that $\ell=-1$.
Denote by
\begin{equation}
M:=\sup\left\{\mu(\R^3)\ :\ \exists (x_k)\subset\R^3,\ \mu_{n_k}(\cdot-x_k)\wto\mu\text{ vaguely}\right\}
\label{eq:highest_mass}
\end{equation}
the largest mass of all the possible weak-$\ast$ vague limits of $\mu_n$ (when tested against continuous function tending to 0 at infinity), up to translations and extraction of a subsequence. If $M=0$ then we have $\mu_n(\cdot-x_n)\wto0$ for any $(x_n)\subset\R^3$ and this implies that $\mu_n(B_R(x_n))\to0$ for every $R>0$. This cannot happen due to the previous step. Therefore we must have $M>0$ and there exists a sequence of translations $(x_k)$ and a subsequence such that $\mu_{n_k}(\cdot-x_k)\wto\mu\neq0$ vaguely with, for instance, $\mu(\R^3)\geq M/2$. The problem being translation-invariant, we may assume for simplicity of notation that $x_k\equiv0$ and that $\mu_n\wto \mu$ vaguely, after extraction of a (not displayed) subsequence. Next, we denote for shortness 
$$\lambda_1(D_0-V_n)=:\lambda_n=-1+\eps_n,\qquad V_n:=\mu_n\ast\frac1{|x|}$$
where $\eps_n\to0^+$. We call $\Psi_n\in H^1(\R^3,\C^4)$ an eigenvector solving 
$$(D_0-V_n)\Psi_n=\lambda_n\Psi_n,\qquad \Psi_n=\begin{pmatrix}
\phi_n\\ \chi_n\end{pmatrix}.$$
We recall that the associated upper spinor $\phi_n$ is the \emph{first} solution of the eigenvalue equation
\begin{equation}
\left(-\sigma\cdot\nabla \frac{1}{\eps_n+V_{n}}\sigma\cdot\nabla +2-\eps_n-V_n\right)\phi_n=0
\label{eq:eigenvalue_eq_phi}
\end{equation}
and that 
$$\chi_n=\frac{-i\sigma\cdot\nabla \phi_n}{\eps_n+V_n}.$$
This follows from the min-max characterization of $\lambda_1$ as recalled in Section~\ref{sec:rappels} and proved in~\cite{EstLewSer-20a_ppt}. The quadratic form associated with the operator in~\eqref{eq:eigenvalue_eq_phi} is
$$q_{\lambda_n}(\phi):=\int_{\R^3}\frac{|\sigma\cdot\nabla\phi(x)|^2}{\eps_n+V_n(x)}\,dx+\int_{\R^3}\big(2-\eps_n-V_n(x)\big)|\phi(x)|^2\,dx\geq0.$$
It is non-negative because $\phi_n$ is the first eigenvector. 
In the whole argument we normalize our solution such that the upper spinor is itself normalized in $L^2$:
$$\boxed{\int_{\R^3}|\phi_n(x)|^2\,dx=1.}$$
With this choice we have little information on $\chi_n$, but this is the proper setting for using the min-max characterization of $\lambda_n=\lambda_1(D_0-V_n)$ and the quadratic form $q_{\lambda_n}$. Our strategy is to get some local compactness on $\phi_n$. Recall that the domain of the limiting operator $D_0-\mu\ast|x|^{-1}$ is included in $H^{1/2}(\R^3,\C^4)$. This suggests to show that $\{\phi_n\}$ is bounded in $H^{1/2}_{\rm loc}(\R^3,\C^2)$. 

Let $0\leq\zeta\leq 1$ be a smooth function with support in the ball $B_4$, which equals one in $B_2$ and set $\zeta_R(x):=\zeta(x/R)$ as well as $\eta_R=\sqrt{1-\zeta_R^2}$. We will use the \emph{pointwise} IMS formula for the Pauli operator which states that 
\begin{equation}
\sum_k|\sigma\cdot\nabla (J_k\phi)|^2=|\sigma\cdot \nabla\phi|^2+|\phi|^2\sum_k |\nabla J_k|^2,
\label{eq:IMS}
\end{equation}
for a partition of unity $\sum_k J_k^2=1$, see~\cite{EstLewSer-20a_ppt}. We obtain
\begin{align}
0&=q_{\lambda_n}(\phi_n)\nn\\
&=q_{\lambda_n}(\zeta_R\phi_n)+q_{\lambda_n}(\eta_R\phi_n)\nn\\
&\qquad -\int_{\R^3}\frac{|\sigma\cdot\nabla\zeta_R(x)|^2+|\sigma\cdot\nabla\eta_R(x)|^2}{\eps_n+V_n(x)}|\phi_n(x)|^2\,dx\nn\\
&\geq q_{\lambda_n}(\zeta_R\phi_n)+q_{\lambda_n}(\eta_R\phi_n)-\frac{C}{R^2}\int_{2R\leq |x|\leq 4R}\frac{|\phi_n(x)|^2}{\eps_n+V_n(x)}\,dx.\label{eq:localizing_f_q}
\end{align}
On the annulus $B_{4R}\setminus B_{2R}$ we have 
$$V_n(x)\geq \left(\mu_n\1_{B_R}\right)\ast\frac1{|x|}\geq \frac{\mu_n(B_R)}{5R}$$
where $5R$ is the largest possible distance between the points in the annulus and the points in the ball $B_R$. Since $\mu_n(B_R)\to\mu(B_R)$ with $\mu(\R^3)\geq M/2>0$ due to the vague convergence, we deduce that for $R$ large enough we have
\begin{equation}
q_{\lambda_n}(\zeta_R\phi_n)+q_{\lambda_n}(\eta_R\phi_n)\leq \frac{C}{R}\int_{B_{4R}\setminus B_{2R}}|\phi_n|^2\leq \frac{C}{R}.
\label{eq:bound_localized}
\end{equation}
Recall that $q_{\lambda_n}\geq0$, hence this gives a bound on  $q_{\lambda_n}(\zeta_R\phi_n)$ and $q_{\lambda_n}(\eta_R\phi_n)$ separately. 

We first look at the local part $q_{\lambda_n}(\zeta_R\phi_n)$ in~\eqref{eq:bound_localized} which gives, after discarding the $L^2$ term, 
\begin{equation}
\int_{\R^3}\frac{|\sigma\cdot \nabla\zeta_R \phi_n|^2}{\eps_n+V_n}\,dx-\int_{\R^3}V_n|\zeta_R\phi_n|^2\,dx\leq \frac{C}{R}.
\label{eq:second_local_term}
\end{equation}
For the second term in~\eqref{eq:second_local_term}, we use the characterization of $\nu_1$ in terms of the Hardy-type inequality~\eqref{eq:nu_c_Hardy_2component}, to infer
\begin{equation}
\int_{\R^3}V_n|\zeta_R\phi_n|^2\,dx\leq \frac{\mu_n(\R^3)^2}{\nu_1^2}\int_{\R^3}\frac{|\sigma\cdot \nabla\zeta_R \phi_n|^2}{V_n}\,dx.
\label{eq:estim_Coulomb_nu_1}
\end{equation}
For the first term in~\eqref{eq:second_local_term}, we use the lower bound
$$\frac1{\eps_n+V_n}=\frac1{V_n}-\frac{\eps_n}{(V_n+\eps_n)V_n}\geq \frac1{V_n}-C\frac{\eps_nR}{\eps_n+V_n}$$
where in the last estimate we have used that 
$$V_n\geq \frac{\mu_n(B_{4R})}{8R}\quad\text{on $B_{4R}$}.$$
We arrive at
$$\int_{\R^3}\frac{|\sigma\cdot \nabla\zeta_R \phi_n|^2}{\eps_n+V_n}\,dx\geq \frac1{1+C\eps_nR}\int_{\R^3}\frac{|\sigma\cdot \nabla\zeta_R \phi_n|^2}{V_n}\,dx.$$
We have therefore proved the following bound
\begin{equation}
\left(\frac1{1+C\eps_nR}-\frac{\mu_n(\R^3)^2}{\nu_1^2}\right)\int_{\R^3}\frac{|\sigma\cdot \nabla\zeta_R \phi_n|^2}{V_n}\,dx
\leq \frac{C}{R}.
\label{eq:estim_kinetic_phi_n}
\end{equation}
Since $\mu_n(\R^3)\leq \nu<\nu_1$ and $\eps_n\to0$ by assumption, this shows that the integral on the left side is uniformly bounded for fixed $R$. In particular, $\zeta_R\phi_n$ is also uniformly bounded in $\cV_{\mu_n}$, the space defined in~\eqref{eq:cV_mu_maximal}. Next, we show how this gives an $H^{1/2}$ bound. In~\cite[Thm.~2]{EstLewSer-20a_ppt} we have shown the inequality
\begin{equation}
\frac{\norm{\phi}^2_{H^{1/2}(\R^3,\C^2)}}{\max\big(2,16\,m(\R^3)\big)}\leq \norm{\phi}^2_{\cV_m}\leq \norm{\phi}^2_{H^{1}(\R^3,\C^2)}.
\label{eq:in_H_1/2}
\end{equation}
Scaling both $\phi$ and $m$ in~\eqref{eq:in_H_1/2}, we obtain the inequality
\begin{equation}
\int_{\R^3}\frac{|\sigma\cdot\nabla\phi(x)|^2}{\eta+V_m(x)}\,dx\geq \frac{\pscal{\phi,|p|\phi}-\eta\norm{\phi}_{L^2(\R^3)}^2}{2\max\big(1,8m(\R^3)\big)},
\label{eq:lower_bound_cV2}
\end{equation}
for all $\phi\in H^1(\R^3,\C^2)$, all positive measure $m$ and all $\eta>0$. Taking then $\eta\to0$ gives 
\begin{equation}
\pscal{\phi,|p|\phi}\leq 16 \,m(\R^3)\int_{\R^3}\frac{|\sigma\cdot\nabla\phi(x)|^2}{V_m(x)}\,dx.
\label{eq:lower_bound_cV_eps0}
\end{equation}
Using this inequality in~\eqref{eq:estim_kinetic_phi_n} gives
\begin{equation}
\left(\frac1{1+C\eps_nR}-\frac{\mu_n(\R^3)^2}{\nu_1^2}\right)\pscal{\zeta_R\chi_n,|p|\zeta_R\phi_n}\leq \frac{C}{R}.
\label{eq:estim_H12_phi_n}
\end{equation}
This shows that $\zeta_R\phi_n$ is bounded in $H^{1/2}$ for every $R$ large enough. In other words, $\phi_n$ is bounded in $H^{1/2}_{\rm loc}$, as we claimed. 

After extraction of a subsequence, we may assume that $\phi_n\wto \phi$ weakly in $L^2$ and strongly in $L^2_{\rm loc}$, hence also almost everywhere. From Lemma~\ref{lem:CV_V_n} we also have that $V_n(x)\to V_\mu(x)$ almost-everywhere. Passing to the limit in~\eqref{eq:estim_Coulomb_nu_1} using~\eqref{eq:estim_kinetic_phi_n} we obtain from Fatou's lemma
$$\int_{\R^3}V_\mu|\zeta_R\phi|^2\,dx\leq \frac{C}{R}.$$
Taking finally $R\to\ii$ gives $\phi\equiv0$. 

Using the strong local compactness, we can choose $R=R_n\to\ii$ sufficiently slowly to ensure that 
$$\eps_n R_n\to0,\qquad \mu_n(B_{R_n})\to\mu(\R^3),\qquad \mu_n(B_{8R_n}\setminus B_{R_n})\to0,$$
$$\int_{B_{8R_n}}|\phi_n|^2\to0.$$
From~\eqref{eq:estim_H12_phi_n} we also have
$$\norm{\zeta_{R_n}\phi_n}_{H^{1/2}}\to0.$$
All this shows that nothing is happening in the region under investigation. The mass of $\phi_n$ must be at infinity. 

At this step we have decomposed our quadratic form as in~\eqref{eq:bound_localized} and have shown that $\phi_n$ has no $L^2$ mass in the region where $\mu_n$ converges to $\mu$. The next step is to apply the whole argument again to $\eta_{R_n}\phi_n$. Namely, we extract the next profile in the sequence $\mu_n$ and use the same argument to show that $\phi_n$ has no mass in the corresponding region. After finitely many steps the remainder $\mu'_n$ will be composed of a piece which can vanish and another piece with an arbitrarily small mass (for instance a mass $\leq1/2$). For simplicity of exposition, we provide the end of the argument in the simplest situation, namely we assume that 
$$\mu_n\1_{\R^3\setminus B_{R_n}}=\mu_n^{(1)}+\mu_n^{(2)}$$
where $\mu_n^{(1)}$ vanishes in the sense of~\eqref{eq:mu_n_vanishes} and $\mu_n^{(2)}(\R^3\setminus B_{R_n})\leq 1/2$. The argument in the general case is similar but more tedious to write down. 
By Lemma~\ref{lem:vanishing} and Hardy's inequality, this implies that 
$$\lambda_1(\nu_n,\mu_n\1_{\R^3\setminus B_{R_n}})\geq 0$$
for $n$ large enough. Hence, by the min-max principle and the characterization in terms of the quadratic form $q_\lambda$, this tells us that 
\begin{multline}
\int_{\R^3}\frac{|\sigma\cdot\nabla\phi|^2}{1+ V_{\mu_n\1_{\R^3\setminus B_{R_n}}}}\,dx+\int_{\R^3}\big(1-V_{\mu_n\1_{\R^3\setminus B_{R_n}}}\big)|\phi|^2\,dx\geq0,\\ \forall\phi\in H^1(\R^3,\C^2).
\label{eq:estim_quad_0}
\end{multline}
On the support of $\eta_{R_n}$ in~\eqref{eq:bound_localized} we have
$$V_n=V_{\mu_n\1_{\R^3\setminus B_{R_n}}}+ V_{\mu_n\1_{B_{R_n}}}\leq  V_{\mu_n\1_{\R^3\setminus B_{R_n}}}+\frac1{R_n}$$
hence we obtain from~\eqref{eq:estim_quad_0} and the fact that $\eps_n+R_n^{-1}\to0$
\begin{align}
q_{\lambda_n}(\eta_{R_n}\phi_n)&\geq \int_{\R^3}\frac{|\sigma\cdot\nabla(\eta_{R_n}\phi_n)|^2}{\eps_n+R_n^{-1}+ V_{\mu_n\1_{\R^3\setminus B_{R_n}}}}\,dx\nn\\
&\qquad+\int_{\R^3}\left(2-\eps_n-\frac1{R_n}-V_{\mu_n\1_{\R^3\setminus B_{R_n}}}\right)|\eta_{R_n}\phi_n|^2\,dx\nn\\
&\geq \int_{\R^3}\left(1-\eps_n-\frac1{R_n}\right)|\eta_{R_n}\phi_n|^2\,dx.\label{eq:small_mass_region}
\end{align}
Since the left side is $\leq C/R_n$ and we have already proved that $\zeta_{R_n}\phi_n$ tends to zero, this shows that $\phi_n\to0$ in $L^2$, a contradiction to its normalization. Hence we conclude that $\ell=-1$ cannot happen, as was claimed. 

\medskip

We have succeeded in showing that the eigenvalues $\lambda_1(D_0-V_{\mu_n})$ cannot approach $-1$. Our next goals are to 
\begin{itemize}
 \item[$(i)$] extract from $\mu_n$ one tight piece of mass $\widetilde \mu_n=\mu_n\1_{B_{R_n}(x_n)}\wto\mu\neq0$ vaguely for a proper space translation $\{x_n\}\subset\R^3$, such that the corresponding eigenvalue $\lambda_1(D_0-V_{\widetilde\mu_n})$ has the \emph{same limit $\ell$} as the original sequence~$\mu_n$;
\item[$(ii$)] prove that $\lambda_1(D_0-V_{\widetilde\mu_n})\to\lambda_1(D_0-V_{\mu})$, using the tightness of $\widetilde \mu_n$.
\end{itemize}
It is convenient to start with $(ii)$, that is, to show that when a sequence of measures converges tightly to a limit $\mu$ and has masses below $\nu_1$, then the first eigenvalue converges. In Step 4 we then explain how to prove $(i)$. 

\medskip

\noindent $\bullet$ \textit{Step 3. Convergence in the tight case.}
In this step we prove the weak continuity of $\mu\mapsto \lambda_1(D_0-V_\mu)$ for the \emph{tight} convergence of measures. Since this is a result of independent interest, we state it as an independent lemma. 

\begin{lemma}[Convergence in the tight case]\label{lem:weak_continuity}
Let $0\leq\nu<\nu_1$. Let $\{\mu_n\}$ be a sequence of non-negative measures such that $\mu_n(\R^3)\leq\nu$ and which converges tightly to a measure $\mu$. Then we have 
\begin{equation}
\lim_{n\to\ii}\lambda_1\left(D_0-\mu_n\ast\frac1{|x|}\right)=\lambda_1\left(D_0-\mu\ast\frac1{|x|}\right).
\end{equation}
\end{lemma}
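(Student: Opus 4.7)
The strategy combines Step~2 above with the Birman-Schwinger principle and Lemma~\ref{lem:CV_V_n}. After extracting a subsequence, $\lambda_n := \lambda_1(D_0-V_{\mu_n})$ converges to some $\ell$. The contradiction argument of Step~2 only uses the hypothesis $\mu_n(\R^3)\leq\nu<\nu_1$ and no non-tightness property of $\mu_n$, so it applies verbatim here and shows $\ell>-1$. Together with $\lambda_n\leq 1$ this gives $\ell\in(-1,1]$, and the goal is to identify $\ell$ with $\lambda_1(D_0-V_\mu)$.

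The key ingredient is the operator-norm convergence, uniform for $\lambda$ in compact subsets of $(-1,1)$, of the Birman-Schwinger operators
\[
K_\lambda^{(n)} := \sqrt{V_{\mu_n}}\,\frac{1}{D_0-\lambda}\,\sqrt{V_{\mu_n}} \ \longrightarrow\ K_\lambda := \sqrt{V_\mu}\,\frac{1}{D_0-\lambda}\,\sqrt{V_\mu}.
\]
I would establish this by factorizing $(D_0-\lambda)^{-1} = |D_0-\lambda|^{-1/2}\,U_\lambda\,|D_0-\lambda|^{-1/2}$ with $U_\lambda = \operatorname{sgn}(D_0-\lambda)$ unitary and commuting with the two other factors (they are all functions of $D_0$). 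This yields $K_\lambda^{(n)} = T_n^{(\lambda)}\, U_\lambda\, \big(T_n^{(\lambda)}\big)^*$ for $T_n^{(\lambda)} := \sqrt{V_{\mu_n}}\,|D_0-\lambda|^{-1/2}$. Uniform boundedness of $T_n^{(\lambda)}$ is immediate from Kato's inequality $\int V_m\,|\phi|^2\,dx \leq \tfrac{\pi}{2}\, m(\R^3)\,\langle\phi,|p|\phi\rangle$, and the norm convergence $T_n^{(\lambda)}\to T^{(\lambda)}$ is obtained by adapting the kernel estimates used to prove Lemma~\ref{lem:CV_V_n}, now applied to the positive square-root factor $|D_0-\lambda|^{-1/2}$ rather than the full resolvent.

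Once this norm convergence is in hand, the conclusion is immediate. By Birman-Schwinger, for $\lambda_n<1$ the value $\lambda_n$ is the unique point where $\max\Spec K_{\lambda_n}^{(n)} = 1$, and the map $\lambda\mapsto \max\Spec K_\lambda^{(n)}$ is non-decreasing and Lipschitz continuous, uniformly in $n$ on compact subsets of $(-1,1)$. Passing to the limit yields $\max\Spec K_\ell = 1$, which by Birman-Schwinger for $D_0-V_\mu$ forces $\ell = \lambda_1(D_0-V_\mu)$. The borderline case $\ell = 1$ is treated identically: $\max\Spec K_\lambda^{(n)}<1$ for every $\lambda<1$ passes to $\max\Spec K_\lambda\leq 1$ for every such $\lambda$, so $\lambda_1(D_0-V_\mu) = 1$ as well. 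The main technical obstacle is precisely the norm convergence of the $K_\lambda^{(n)}$: the unbounded multiplier $\sqrt{V_{\mu_n}}$ appearing on both sides of the resolvent forbids a naive derivation from Lemma~\ref{lem:CV_V_n} alone, which is why the factorization through $|D_0-\lambda|^{-1/2}$ (or an equivalent localization in the spirit of Lemma~\ref{lem:essential_spectrum}) is needed.
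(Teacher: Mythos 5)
Your reduction to the case $\ell>-1$ via Step~2 is correct (that argument indeed uses only $\mu_n(\R^3)\leq\nu<\nu_1$, not non-tightness), and recasting the problem through the Birman--Schwinger operator is the right framework. However, the central claim of your argument---that $K_\lambda^{(n)}\to K_\lambda$ in operator norm, proved via a factorization $K_\lambda^{(n)}=T_n^{(\lambda)}U_\lambda(T_n^{(\lambda)})^*$ with $T_n^{(\lambda)}=\sqrt{V_{\mu_n}}|D_0-\lambda|^{-1/2}$ converging in norm---is \emph{false} in general, and this is not a fixable technicality. Take $\mu_n=\nu\,n^3\zeta(n\,\cdot)$ a mollification of $\nu\delta_0$ with $0<\nu<\nu_1$, so $\mu_n\to\nu\delta_0$ tightly. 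Each $\mu_n$ is atomless with compact support, so by Lemma~\ref{lem:essential_spectrum} (which cites \cite[Lem.~8]{EstLewSer-20a_ppt}) the operator $K_\lambda^{(n)}$ is \emph{compact}. But the limiting operator $K_\lambda=\sqrt{\nu}|x|^{-1/2}(D_0-\lambda)^{-1}\sqrt{\nu}|x|^{-1/2}$ has essential spectrum $[-\nu,\nu]\neq\{0\}$, again by Lemma~\ref{lem:essential_spectrum}. A norm limit of compact operators is compact, so norm convergence is impossible. The same obstruction rules out norm convergence of $T_n^{(\lambda)}$: if $T_n^{(\lambda)}\to T^{(\lambda)}$ in norm, then $K_\lambda^{(n)}=T_n^{(\lambda)}U_\lambda(T_n^{(\lambda)})^*\to K_\lambda$ in norm, which we just excluded. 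Concretely, $|x|^{-1/2}|p|^{-1/2}$ is scale-invariant hence non-compact, while $\sqrt{V_{\mu_n}}|D_0-\lambda|^{-1/2}$ is compact for atomless compactly supported $\mu_n$; so ``adapting the kernel estimates of Lemma~\ref{lem:CV_V_n}'' cannot succeed here (those estimates exploit the full power $(D_0-\lambda)^{-1}$, which tips $|x|^{-1/2}(D_0-\lambda)^{-1}$ over to the compact side; with the half power $|D_0-\lambda|^{-1/2}$ you sit exactly at the scale-invariant borderline).

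With only \emph{strong} convergence $K_\lambda^{(n)}\to K_\lambda$ (which \emph{is} true for tightly converging $\mu_n$), the lower semicontinuity of $\max\Spec$ under strong convergence gives one inequality for the limiting eigenvalue but not the other, so your ``passing to the limit yields $\max\Spec K_\ell=1$'' does not follow. The paper's proof avoids this pitfall by using the Konno--Kuroda/Tiktopoulos resolvent formula~\eqref{eq:resolvent} at a single point $E=-1+\eps_0$. There, what is needed from the middle factor $(1-K_n)^{-1}$ is only strong convergence \emph{together with a uniform operator bound} $K_n\leq(1+\eta)^{-1}<1$ (coming from the $\eta$-inflation trick using $\nu(1+\eta)<\nu_1$), while the outer factors $\frac{1}{D_0-E}\sqrt{V_n}$ and $\sqrt{V_n}\frac{1}{D_0-E}$ converge in \emph{norm} by Lemma~\ref{lem:CV_V_n}. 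Since a product of three uniformly bounded sequences converges in norm when the outer two converge in norm and the middle one converges strongly, this yields norm \emph{resolvent} convergence of $D_0-V_n$ to $D_0-V_\mu$, from which convergence of the first eigenvalue follows. If you want to keep a Birman--Schwinger phrasing of the proof, you must replace the false norm convergence of $K_\lambda^{(n)}$ by this resolvent-formula argument (or by a comparable device that only uses the genuinely available strong convergence of $K_\lambda^{(n)}$ plus the uniform bound away from $1$).
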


\begin{proof}[Proof of Lemma~\ref{lem:weak_continuity}]
Since $\nu<\nu_1$, we can pick an $\eta>0$ such that 
$\nu(1+\eta)<\nu_1$
and consider the sequence $V'_n:=(1+\eta)\mu_n\ast|x|^{-1}=(1+\eta)V_n$ where $(1+\eta)\mu_n(\R^3)\leq(1+\eta)\nu<\nu_1$. The first part of the proof implies that there exists $\eps_0>0$ so that 
$$\lambda_1\Big(D_0-(1+\eta)V_n\Big)> -1+\eps_0$$
for $n$ large enough. From the Birman-Schwinger principle in~\cite[Thm.~3]{EstLewSer-20a_ppt} this is equivalent to saying that 
$$\max\Spec\left(\sqrt{V_n}\frac{1}{D_0+1-\eps_0}\sqrt{V_n}\right)<\frac1{1+\eta}<1.$$
Therefore, we have the operator bound
$$K_n:=\sqrt{V_n}\frac{1}{D_0+1-\eps_0}\sqrt{V_n}<\frac1{1+\eta}.$$
By Lemma~\ref{lem:CV_V_n} we have the strong convergence
$$K_n\to K=\sqrt{V}\frac{1}{D_0+1-\eps_0}\sqrt{V}$$
and the uniform upper bound implies from the functional calculus that 
$$(1-K_n)^{-1}\to(1-K)^{-1}$$
strongly as well. In addition Lemma~\ref{lem:CV_V_n} provides the norm convergence of $\sqrt{V_n}(D_0+1-\eps_0)^{-1}$. From the resolvent formula~\cite{Nenciu-76,KlaWus-79,Klaus-80b} 
\begin{multline}
\frac1{D_0-V-E}=\frac1{D_0-E}\\-\frac1{D_0-E}\sqrt{V_n}\frac1{1-\sqrt{V_n}\frac1{D_0-E}\sqrt{V_n}}\sqrt{V_n}\frac1{D_0-E} 
 \label{eq:resolvent}
\end{multline}
with $E=-1+\eps_0$, we conclude that 
$$(D_0-V_n+1-\eps_0)^{-1}\to(D_0-V+1-\eps_0)^{-1}$$
in norm as $n\to\ii$. The convergence of the resolvent implies the convergence of the spectrum. In particular, the first eigenvalue $\lambda_1(D_0-V_n)$ (which is known to be larger than $-1+\eps_0$ by the above arguments) converges to $\lambda_1(D_0-\mu\ast|x|^{-1})$ and this concludes the proof of Lemma~\ref{lem:weak_continuity}.
\end{proof}

\medskip

\noindent $\bullet$ \textit{Step 4. Extraction of a tight minimizing sequence.}
Next, we go back to our initial minimizing sequence $\mu_n$, for which we know that $-1<\ell<1$. We would like to extract from $\mu_n$ a new \emph{tight} minimizing sequence, by removing the unnecessary parts going to infinity. 

The idea is the following. We apply the same strategy as in the previous step and extract finitely many weak limits of $\mu_n$ up to translations, so that the remainder can be written in the form $\mu_n'=\mu_n^{(1)}+\mu_n^{(2)}$ where $\mu_n^{(1)}$ vanishes in the sense of~\eqref{eq:mu_n_vanishes} and $\mu_n^{(2)}(\R^3)\leq \eta\ll1$. This time we choose $\eta$ to guarantee that 
$\lambda_1\big(D_0-V_{\mu_n^{(2)}}\big)>\ell$. By an argument similar to the one in~\eqref{eq:small_mass_region}, we can prove that $\phi_n$ converges to $0$ in $L^2$ on the support of $\mu_n'$. Hence it must have a non zero mass in one of the regions where $\mu_n$ converges tightly to a non-zero measure. We then show that the eigenvalue of this particular tight piece converges to $\ell$. 

For the sake of clarity, we write again the whole argument in the simplest situation where we only have one tight piece. Thus we have like in the previous proof that $\mu_n\1_{B_{R_n}}\to\mu$ tightly, whereas $\mu_n':=\mu_n\1_{\R^3\setminus B_{R_n}}=\mu_n^{(1)}+\mu_n^{(2)}$ where $\mu_n^{(1)}$ vanishes and $\mu_n^{(2)}(\R^3\setminus B_{R_n})\leq \eta\ll1$. Then from~\eqref{eq:bound_localized} we know that $\eta_{R_n}\phi_n\to0$, which implies that 
$$\lim_{n\to\ii}\int_{\R^3}|\zeta_{R_n}\phi_n|^2=1.$$
We have in addition
\begin{equation}
q_{\lambda_n}(\zeta_{R_n}\phi_n)\leq \frac{C}{R_n} \leq \frac{C'}{R_n}\int_{\R^3}|\zeta_{R_n}\phi_n|^2
\label{eq:bound_localized2}
\end{equation}
since the last integral converges to 1. On the support of $\zeta_{R_n}$ we have as before
$$V_n\leq \nu_n V_{\mu_n\1_{B_{8R_n}}}+\frac{C}{R_n}$$
hence we obtain
\begin{multline*}
0\geq  \int_{\R^3}\frac{|\sigma\cdot\nabla(\zeta_{R_n}\phi_n)|^2}{1+\lambda_n+C/R_n+ V_{\mu_n\1_{B_{8R_n}}}}\,dx\\
+\int_{\R^3}\left(1-\lambda_n-\frac{C}{R_n}-V_{\mu_n\1_{\R^3\setminus B_{8R_n}}}\right)|\zeta_{R_n}\phi_n|^2\,dx.
\end{multline*}
From the characterization of the first eigenvalue via the quadratic form, this proves that 
\begin{equation}
\lambda_1\left(D_0-V_{\mu_n\1_{B_{8R_n}}}\right)\leq \lambda_n+\frac{C}{R_n}.
\label{eq:upper_lambda_1_tight_case}
\end{equation}
From the convergence in Lemma~\ref{lem:weak_continuity} the left side converges to $\lambda_1(D_0-V_\mu)$ and hence we obtain after passing to the limit
$$\lambda_1(D_0-V_\mu)\leq\ell.$$
On the other hand, for every fixed $\phi\in C^\ii_c(\R^3,\C^2)$ we have $q_{\lambda_n,\mu_n}(\phi)\geq0$ and passing to the limit using the strong local convergence of $V_n$ from Lemma~\ref{lem:CV_V_n}, we obtain
$q_{\ell,\mu}(\phi)\geq0$. This precisely means that 
$$\ell\leq \lambda_1(D_0-V_\mu).$$
Thus we have proved, as desired, that $\ell=\lambda_1(D_0-V_\mu)$ and this concludes the proof of Proposition~\ref{prop:weak_continuity}.
\end{proof}

\begin{remark}
The function $\nu\mapsto \lambda_1(\nu)$ is non-increasing and positive for $\nu\leq2/(\pi/2+2/\pi)$ by Theorem~\ref{thm:critical_nu's}. Therefore, there exists a critical number $\nu_1'\geq 2/(\pi/2+2/\pi)$ for which $\lambda_1(\nu)>-1$ on $[0,\nu_1')$ and $\lambda_1(\nu)=-1$ on $(\nu_1',1)$. It is not difficult to see that $\nu_1'\leq\nu_1$, the critical number defined in~\eqref{eq:def_nu_c}. In Step 3 we proved that $\ell>-1$ for every sequence $\{\mu_n\}$ so that $\mu_n(\R^3)\leq \nu<\nu_1$ and this actually shows that $\nu_1'=\nu_1$. In other words, we can exchange the limit over $\nu$ with the infimum over $\mu$. 
\end{remark}

\subsection{Proof of Theorem~\ref{thm:existence_optimal_measure}}\label{sec:proof_thm_existence}

We first prove the existence of an optimal measure using Proposition~\ref{prop:weak_continuity}, before we discuss the other parts of the statement.

\medskip

\noindent $\bullet$ \textit{Step 1. Existence of an optimizer.} Let $\{\mu_n\}$ be any minimizing sequence for $\lambda_1(\nu)$, with $0<\nu<\nu_1$ and $\mu_n(\R^3)=\nu$. From Proposition~\ref{prop:weak_continuity} we know that there exists a subsequence and space translations $\{x_k\}\subset\R^3$ so that $\mu_{n_k}(\cdot+x_k)\wto\mu$ vaguely (hence $\mu(\R^3)\leq\nu$) and
$$\lambda_1(\nu)=\lim_{n\to\ii}\lambda_1(D_0-V_{\mu_n})=\lambda_1(D_0-V_\mu).$$
The measure $\mu$ is the desired optimizer. To prove that the convergence is in fact tight, we have to show that $\mu(\R^3)=\nu$. The argument here relies on the strict monotonicity of the eigenvalue, as we now explain. First, for $\nu>0$ we have 
$$\lambda_1(\nu)\leq\lambda_1(D_0-\nu/|x|)=\sqrt{1-\nu^2}<1,$$ 
we deduce that $\mu\neq0$ (that is, the sequence $\{\mu_n\}$ cannot vanish). On the other hand, if $\mu(\R^3)<\nu$ we have
\begin{equation}
\lambda_1(\nu)=\lambda_1(D_0-V_\mu)>\lambda_1\left(D_0-\frac{\nu}{\mu(\R^3)}V_\mu\right)\geq \lambda_1(\nu), 
 \label{eq:proof_total_mass}
\end{equation}
a contradiction. Hence $\mu(\R^3)=\nu$ and the original sequence must be tight. In~\eqref{eq:proof_total_mass} we have used that $t\mapsto \lambda_1(D_0-tV_\mu)$ is decreasing for a fixed $\mu$. This follows from the min-max principle and the characterization in terms of quadratic forms~\cite{DolEstSer-00}. Indeed, if $\phi_\nu\neq0$ is an eigenfunction associated with $\lambda_1(D_0-V_\mu)$, we have
\begin{multline*}
\int_{\R^3}\frac{|\sigma\cdot\nabla\phi_\nu|^2}{1+\lambda_1(D_0-V_\mu)+t V_{\mu}}\,dx+\int_{\R^3}\big(1-\lambda_1(D_0-V_\mu)-tV_{\mu}\big)|\phi_\nu|^2\,dx\\
<q_{\lambda_1(D_0-V_\mu)}(\phi_\nu)=0 
\end{multline*}
for $t>1$ since $V_\mu>0$ everywhere. This shows that $\lambda_1(D_0-tV_\mu)<\lambda_1(D_0-V_\mu)$ for $t>1$. 

\medskip

\noindent $\bullet$ \textit{Step 2. Properties of $\nu\mapsto \lambda_1(\nu)$.}
The function $\nu\mapsto \lambda_1(\nu)$ is known to be non-increasing for $\nu\in[0,\nu_1)$. Since there exists a minimizer $\mu$ for every $\nu$ the previous argument shows that $\nu\mapsto \lambda_1(\nu)$ is decreasing. Hence it is continuous except possibly on a countable set. 

To prove the continuity, consider a sequence $\nu_n\to\nu\in(0,\nu_1)$ together with an associated sequence of optimizers $\mu_n$ so that $\lambda_1(D_0-V_{\mu_n})=\lambda_1(\nu_n)$. From Proposition~\ref{prop:weak_continuity} we know that we can assume $\mu_n\wto \mu\neq0$ vaguely after an appropriate translation and extraction of a subsequence, so that 
$$\liminf_{n\to\ii}\lambda_1(D_0-V_{\mu_n})=\lim_{n\to\ii}\lambda_1(D_0-V_{\mu_n})=\lambda_1(D_0-V_{\mu})\geq\lambda_1(\nu).$$
Let $\mu$ be an optimizer for $\lambda_1(\nu)$. We use $(\nu_n/\nu) \mu$ as a trial state for $\lambda_1(\nu_n)$ and obtain 
$$\limsup_{n\to\ii}\lambda_1(\nu_n)\leq \lim_{n\to\ii}\lambda_1\left(D_0-\frac{\nu_n}{\nu}V_\mu\right)=\lambda_1(D_0-V_\mu)=\lambda_1(\nu)$$
since the map $t\mapsto \lambda_1(D_0-tV_\mu)$ is continuous for a fixed $\mu$. This concludes the proof of the continuity of $\nu\mapsto \lambda_1(\nu)$. 

Finally, we discuss the regularity of $\nu\mapsto\lambda_1(\nu)$. It is well known that for every fixed $\mu$, the function $t \mapsto \lambda_1(D_0-tV_\mu)$ is Lipschitz~\cite{Kato}. The Lipschitz constant is uniformly bounded, for $t$ in any compact set of $[0,\nu_1/\mu(\R^3))$. This follows from the resolvent formula
\begin{align*}
&\frac1{D_0-tV_\mu+1-\eps_0}-\frac1{D_0-t'V_\mu+1-\eps_0} \\
&\qquad =(t-t')\frac1{D_0-t V_\mu+1-\eps_0}V_\mu \frac1{D_0-t'V_\mu+1-\eps_0}
\end{align*}
which implies 
\begin{align*}
&\norm{\frac1{D_0-t V_\mu+1-\eps_0}-\frac1{D_0-t'V_\mu+1-\eps_0}} \\
&\qquad \leq \frac\pi2|t-t'| \norm{\frac1{D_0-t V_\mu+1-\eps_0}|D_0|^{\frac12}}\norm{|D_0|^{\frac12}\frac1{D_0-t'V_\mu+1-\eps_0}}.
\end{align*}
Here $\eps_0:=\lambda_1(\nu_1-\eta)+1>0$ where $\eta>0$ is chosen so that $t,t'<(\nu_1-2\eta)/\mu(\R^3)$. The two norms can be estimated uniformly in $\mu$ using the resolvent formula~\eqref{eq:resolvent} and the fact that 
$$\sqrt{V_\mu}\frac{1}{D_0+1-\eps_0}\sqrt{V_\mu}\leq\frac1{\nu_1-\eta}.$$
To see that the Lipschitz property at fixed $\mu$ implies a similar property for $\lambda_1(\nu)$, we remark that for $\nu\leq\nu'$
$$\lambda_1(\nu')\leq \lambda_1(\nu)\leq \lambda_1(\nu,\mu')=\lambda_1(\nu',\mu')+C(\nu'-\nu)$$
where $\mu'$ is a minimizer for $\lambda_1(\nu')$.

\medskip

\noindent $\bullet$ \textit{Step 3. Euler-Lagrange equation.}
Let $\mu$ be a minimizer for $\lambda_1(\nu)$ and let $\Psi=(\phi,\chi)$ be any corresponding eigenfunction. Recall that $\phi$ solves~\eqref{eq:eigenvalue_eq_phi} and that 
$$\chi=\frac{-i\sigma\cdot\nabla \phi}{1+\lambda_1(\nu)+ V_\mu}.$$
Let $\mu'$ be any other probability measure and $\mu_t:=(1-t)\mu+t\mu'$, for $t\in[0,1]$. Then we have $\lambda_1(D_0-V_{\mu_t})\geq\lambda_1(D_0-V_\mu)$ and this implies that $q_{\lambda_1,\mu_t}(\phi)\geq0$ for all $t\in[0,1]$. Expanding in $t$ gives that 
$$\int_{\R^3}\left(|\Psi|^2\ast\frac1{|\cdot|}\right)(x)\,{\rm d}(\mu'-\mu)(x)\leq0$$
for all such $t\in[0,1]$, where $|\Psi|^2=|\phi|^2+|\chi|^2$. In other words, $\mu$ solves the maximization problem
$$\sup_{\substack{\mu'\geq0\\ \mu'(\R^3)=1}}\int_{\R^3}\left(|\Psi|^2\ast\frac1{|\cdot|}\right)(x)\,{\rm d}\mu'(x).$$
Since $\Psi\in H^{1/2}(\R^3,\C^4)$ by~\cite[Thm.~1]{EstLewSer-20a_ppt}, the potential $|\Psi|^2\ast|x|^{-1}$ is actually a continuous function tending to zero at infinity. The solutions to the maximization problem are exactly the measures supported on the compact set where this function attains its maximum. In particular $\mu$ concentrates on the compact set
$$K:={\rm argmax}\left(|\Psi|^2\ast\frac1{|x|}\right).$$

\medskip

\noindent $\bullet$ \textit{Step 4. $K$ has zero measure.}
The final step is to prove that $K$ has zero Lebesgue measure. To this end, we argue by contradiction and show that when $|K|>0$ the corresponding $\Psi$ would vanish to all orders at one point of $K$. This is impossible by unique continuation, as explained in Appendix~\ref{app:unique_continuation}. 

Let us therefore assume that $|K|>0$ and denote by 
$$\Omega:=\R^3\setminus\{R_1,...,R_K\}$$
the set obtained after removing the largest singularities of $\mu$, for instance all the points so that $\mu(\{R_j\})\geq \min(1/4,\eps_0/4)$ where $\eps_0$ is the universal constant from Theorem~\ref{thm:unique_continuation} in Appendix~\ref{app:unique_continuation}. Then of course $|K\cap\Omega|>0$ as well. Let us denote by 
$$U:=\max_{\R^3}\left(|\Psi|^2\ast\frac1{|x|}\right)-|\Psi|^2\ast\frac1{|x|}\geq0$$
the shifted potential which satisfies $U\equiv0$ on $K$ as well as the equation
$$\Delta U=4\pi|\Psi|^2\geq0$$
on $\R^3$. Consider a point of full measure $x_0\in \Omega\cap K$, that is, such that 
$$\lim_{r\to0}\frac{|B_r(x_0)\setminus K|}{|B_r(x_0)|}=0.$$
To simplify our exposition we assume without loss of generality that $x_0=0$. 
Let $\chi\in C^\ii_c(B_2)$ be so that $\chi_{|B_1}\equiv1$ and set $\chi_r(x):=\chi(x/r)$. Then we have
$$-\chi_r U\Delta(\chi_r U)=-4\pi\chi_r^2U|\Psi|^2-\chi_r U^2\Delta\chi_r-\frac12\nabla\chi_r^2\cdot \nabla U^2.$$
The first term on the right side is non-positive since $U\geq0$. Integrating we obtain
$$\int_{\R^3}|\nabla (\chi_r U)|^2\leq  -\int_{\R^3}\chi_r U^2\Delta\chi_r+\frac12\int_{\R^3}U^2\Delta\chi_r^2=\int_{\R^3}U^2|\nabla \chi_r|^2$$
and therefore
\begin{align*}
\int_{B_r}\left(\frac{U^2}{r^2}+|\nabla U|^2\right)&\leq \int_{\R^3}\left(\frac{\chi_r^2U^2}{r^2}+|\nabla (\chi_rU)|^2\right)\\
&\leq  \frac{C}{r^2}\int_{B_{2r}}U^2=\frac{C}{r^2}\int_{B_{2r}\setminus K}U^2 
\end{align*}
since $U\equiv0$ on $K$ by definition. Next, we use the Sobolev inequality~\cite[Thm.~4.12]{AdamsFournier} in the ball $B_{2r}$
and obtain 
$$ \frac{C}{r^2}\int_{B_{2r}\setminus K}U^2\leq  \frac{C|B_{2r}\setminus K|^{\frac23}}{r^2}\norm{U}_{L^6(B_{2r})}^2\leq \frac{C|B_{2r}\setminus K|^{\frac23}}{r^2}\int_{B_{2r}}\left(\frac{U^2}{4r^2}+|\nabla U|^2\right)$$
(the dependence in $r$ follows by scaling). Hence in summary we have proved that
$$ \int_{B_r}\left(\frac{U^2}{r^2}+|\nabla U|^2\right)\leq C\left(\frac{|B_{2r}\setminus K|}{|B_r|}\right)^{\frac23}\int_{B_{2r}}\left(\frac{U^2}{4r^2}+|\nabla U|^2\right)$$
for a universal constant $C$. By arguing like in~\cite[Section~3]{FigGos-92} this proves that 
\begin{equation}
 \lim_{r\to0^+} r^{-\alpha}\int_{B_r}\left(\frac{U^2}{r^2}+|\nabla U|^2\right)=0,\qquad \forall\alpha>0,
 \label{eq:U_vanishes_any_order}
\end{equation}
that is, $U$ and $\nabla U$ vanish to all orders at $x_0=0$. 

Next, we prove that $\Psi$ also vanishes to all orders at the same point. We use Green's formula in the form
\begin{align*}
4\pi\int_{B_r}|\Psi|^2=\int_{B_r}\Delta U=-\int_{S_r}\nabla  U\cdot n
\end{align*}
where $n$ is the outward normal to the sphere $S_r$ of radius $r$. Note that $\Psi\in H^1(\Omega)$ by~\eqref{eq:domain_in_H1_loc} since in $\Omega$ we have removed the largest singularities. In particular, $\nabla U$ is indeed a continuous function on $\Omega$ by Hardy's inequality. Although we could show that $\nabla U$ vanishes to all orders when integrated on the sphere $S_r$, we prefer to bound it in terms of $U$. After passing to spherical coordinates we see that 
$$\int_{S_r}\nabla  U\cdot n=r^2\frac{d}{d r}\left(\frac1{r^2}\int_{S_r}U\right)$$
therefore we obtain after integrating over $r$
\begin{equation}
\frac{\pi}{r^2}\int_{B_r}|\Psi|^2\leq 4\pi\int_r^{2r}\int_{B_s}|\Psi|^2\,\frac{ds}{s^2}=\frac1{r^2}\int_{S_r}U-\frac1{4r^2}\int_{S_{2r}}U.
\label{eq:estim_Psi_U}
\end{equation}
Since $U\geq0$ we have shown the inequality
$$\pi\int_{B_r}|\Psi|^2\leq \int_{S_r}U.$$
From the continuity of boundary traces in the ball $B_r$ we have
$$\int_{S_r}U\leq Cr\int_{B_r}\left(\frac{U^2}{r^2}+|\nabla U|^2\right)$$
which vanishes to all orders, as we have shown in~\eqref{eq:U_vanishes_any_order}. By~\eqref{eq:estim_Psi_U} this proves that $\Psi$ also vanishes to all orders at the same point. This is impossible by Corollary~\ref{cor:unique_continuation_general_charge} in Appendix~\ref{app:unique_continuation}. Hence we must have $|K|=0$ and this concludes the proof of Theorem~\ref{thm:existence_optimal_measure}.\qed

\appendix
\section{A unique continuation principle for Dirac operators}\label{app:unique_continuation}

The unique continuation principle for Dirac operators has been the object of many works, including for instance~\cite{Vogelsang-87,Jerison-86,Wolff-90,Mandache-94,Kim-95,CarOka-99,KalYam-99,Booss-00}. Here we prove a result which we have not been able to locate in the literature and which is well adapted to the case of Coulomb potentials generated by an arbitrary charge charge distribution $\mu$. 

\begin{thm}[Strong unique continuation for Dirac operators]\label{thm:unique_continuation}
Let $\Omega\subset \R^3$ be a connected open set and $V\in L^{3,\ii}_{\rm loc}(\Omega,\R_+)$. Let $\Psi\in L_{\rm loc}^{3,\frac32}(\Omega,\C^4)$ be such that $\nabla \Psi\in L_{\rm loc}^{\frac32}(\Omega)$ and solving the differential inequality
\begin{equation}
|\alpha\cdot\nabla \Psi(x)|\leq V(x)|\Psi(x)|\qquad\text{on $\Omega$.}
\end{equation}
There exists a universal constant $\eps_0$ such that if
$$\limsup_{r\to 0}\norm{V}_{L^{3,\ii}(B_r(x_0))}\leq \eps_0$$
for every $x_0\in \Omega$, and if $\Psi$ vanishes to all orders at one point $x_1\in\Omega$, that is,
$$\lim_{r\to0^+}r^{-\alpha}\int_{B_r(x_1)}|\Psi|^2=0,\qquad\forall \alpha>0,$$
then $\Psi\equiv0$ everywhere on $\Omega$. 
\end{thm}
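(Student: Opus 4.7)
The strategy is a Carleman estimate tailored to the Lorentz-space setting forced by the hypothesis $V\in L^{3,\infty}_{\rm loc}$. It suffices to prove the local statement: \emph{if $\Psi$ vanishes to infinite order at a point $x_0\in\Omega$ where $\|V\|_{L^{3,\infty}(B_r(x_0))}$ is small for some $r>0$, then $\Psi\equiv 0$ on a ball around $x_0$.} The passage from this local statement to the global one is classical: the set of points where $\Psi$ vanishes in some neighborhood is open, and a standard propagation argument along chains of balls in $\Omega$ (each small enough for the hypothesis on $V$ to apply) together with connectedness of $\Omega$ upgrades the local conclusion to $\Psi\equiv 0$ on all of $\Omega$.

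The core ingredient is a Carleman-type estimate for the free Dirac operator $L_0:=-i\alpha\cdot\nabla$ of the form
$$\bigl\| e^{\tau\phi(|x|)}\, u\bigr\|_{L^{3,3/2}(\R^3)} \;\leq\; C\,\bigl\| e^{\tau\phi(|x|)}\, L_0 u\bigr\|_{L^{3/2}(\R^3)}$$
valid for every $u\in C^\infty_c(\R^3\setminus\{0\},\C^4)$, with $C$ independent of $\tau\geq 1$, and a radial weight $\phi$ of Jerison--Kenig type (for instance $\phi(r)=-\log r$ with a mild convex correction ensuring pseudo-convexity at every scale). This is the first-order, Lorentz-space refinement of the Kenig--Ruiz--Sogge and Jerison--Kenig uniform Sobolev inequality. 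I would derive it by decomposing $L_0$ on spherical harmonics after conjugation by the weight, and reducing via the identity $(\alpha\cdot\nabla)^2=\Delta\,I_4$ to corresponding estimates for the scalar Laplacian. The choice of the target space $L^{3,3/2}$ is exactly what is needed in view of the Lorentz-space Hölder inequality
$$\|Vf\|_{L^{3/2}}\;\leq\;\|V\|_{L^{3,\infty}}\,\|f\|_{L^{3,3/2}},$$
which pairs the hypothesis on $V$ with the output of the Carleman estimate.

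Granting this, I would apply the estimate to $u=\chi\Psi$, with $\chi\in C^\infty_c(B_r(x_0))$ equal to $1$ on $B_{r/2}(x_0)$, choosing $r$ so small that $\|V\|_{L^{3,\infty}(B_r(x_0))}\leq\eps_0:=1/(2C)$. The differential inequality gives $|L_0(\chi\Psi)|\leq V|\chi\Psi|+|(\alpha\cdot\nabla\chi)\Psi|$ on $B_r(x_0)$, and the Lorentz Hölder inequality then allows absorbing the $V$-contribution into the left-hand side, leaving
$$\bigl\| e^{\tau\phi}\chi\Psi\bigr\|_{L^{3,3/2}(\R^3)}\;\leq\; 2C\,\bigl\| e^{\tau\phi}(\alpha\cdot\nabla\chi)\Psi\bigr\|_{L^{3/2}(\R^3)}.$$
The right-hand side is supported in the annulus $B_r\setminus B_{r/2}$ where $e^{\tau\phi}\leq e^{\tau\phi(r/2)}$, whereas the left-hand side restricted to any ball $B_\rho(x_0)$ with $\rho<r/2$ is at least $e^{\tau\phi(\rho)}\,\|\Psi\|_{L^{3,3/2}(B_\rho)}$. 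Sending $\tau\to+\infty$ and using $\phi(\rho)>\phi(r/2)$ forces $\Psi\equiv 0$ on $B_\rho$ for every $\rho<r/2$. The Carleman estimate applies \emph{a priori} only to test functions vanishing near $x_0$, so invoking it for $\chi\Psi$ requires a density argument via truncations $\chi\,\1_{|x-x_0|>\delta}\,\Psi$ with $\delta\to 0^+$; compatibility with the polynomially growing weight $e^{\tau\phi}$ is provided precisely by the hypothesis that $\Psi$ vanishes to infinite order at $x_0$, combined with $\nabla\Psi\in L^{3/2}_{\rm loc}$.

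The main obstacle is the Carleman estimate of the second step in the Lorentz-space framework: the classical Jerison--Kenig machinery produces $L^3\to L^{3/2}$ uniform bounds, and sharpening the target space to the smaller Lorentz space $L^{3,3/2}$ --- which is unavoidable here since the hypothesis on the potential is only $V\in L^{3,\infty}$ --- requires either a real-interpolation argument at the endpoint or a direct weak-type analysis of the weighted operator on each spherical-harmonic subspace. A secondary technical point is the density/truncation argument justifying the application of the Carleman estimate to $\chi\Psi$, which depends on matching the rate of infinite-order vanishing of $\Psi$ with the local growth of the weight.
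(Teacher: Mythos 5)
Your overall strategy matches the paper's: a Carleman estimate for the free Dirac operator mapping $L^{3/2}$ to $L^{3,3/2}$ with power-like radial weights, paired with the Lorentz--H\"older inequality $\|Vf\|_{L^{3/2}}\leq\|V\|_{L^{3,\ii}}\|f\|_{L^{3,\frac32}}$ to absorb the potential for small $\eps_0$, then cutoffs $\chi_r$ and truncations near the vanishing point together with the infinite-order vanishing hypothesis to kill the boundary contributions as $\tau\to\ii$, and finally a connectedness/propagation argument. All of this is exactly what the paper does.

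The genuine gap is in the step you yourself flag as ``the main obstacle'': you do not prove the Lorentz-endpoint Carleman estimate, and the route you sketch would not work as stated. The identity $(\alpha\cdot\nabla)^2=\Delta\,I_4$ reduces to a \emph{second-order} Carleman estimate for the Laplacian, whose exponent pair is governed by $\tfrac1p-\tfrac1q=\tfrac2n$ (in $\R^3$, $L^{6/5}\to L^6$), whereas what you need here is the \emph{first-order} pair $\tfrac1p-\tfrac1q=\tfrac1n$ ($L^{3/2}\to L^3$, or rather $L^{3/2}\to L^{3,3/2}$ at the endpoint). Squaring $\alpha\cdot\nabla$ therefore lands you with the wrong operator on the right-hand side and the wrong Lebesgue exponents, and the ``spherical harmonic after conjugation'' analysis at the Lorentz endpoint is precisely the hard analytic content that is left open in your outline.

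The paper sidesteps all of this with a short, clean reduction that you may want to note. Writing the Carleman inequality by duality as a bound for $|x|^{-\tau-1}(\alpha\cdot p)^{-1}|x|^{\tau+1}$ from $L^{3/2}$ to $L^{3,3/2}$, one observes that the kernel of $(\alpha\cdot p)^{-1}$ is $i\alpha\cdot(x-y)/(4\pi|x-y|^3)$, whose pointwise operator norm equals $1/(4\pi|x-y|^2)$, i.e.\ the (positive) kernel of $(-\Delta)^{-1/2}$. Pointwise kernel domination then transfers the whole estimate to the scalar fractional operator $(-\Delta)^{-1/2}$, where the Lorentz-endpoint Carleman inequality with the same power weights was already proved by Seo (\emph{Proc.\ Amer.\ Math.\ Soc.} 143 (2015), 1661--1664), extending Jerison--Kenig/Stein. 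This is both shorter and avoids any spherical-harmonic analysis for the Dirac operator itself. The remainder of your argument (cutoffs, truncation, absorption, $\tau\to\ii$, chaining) is essentially the same as the paper's and is correct.
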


Here $L^{p,q}$ are Lorentz spaces~\cite{Grafakos-book}. The case where $V\in L^3_{\rm loc}(\R^3)$ is treated in~\cite{Wolff-90} but we are not aware of a result for potentials which are small locally in the Lorentz space $L^{3,\ii}$. For the Laplacian a result similar to Theorem~\ref{thm:unique_continuation} with $V$ small in $L^{3/2,\ii}$ was proved first by Stein in~\cite{Stein-85} based on ideas of Jerison-Kenig~\cite{JerKen-85}. This has recently been generalized to fractional Laplacians by Seo in~\cite{Seo-15}, a result on which we rely in our proof. 

\begin{proof}[Proof of Theorem~\ref{thm:unique_continuation}]
The proof is based on the following Carleman inequality in Lorentz spaces
\begin{equation}
\boxed{ \norm{|x|^{-\tau_j-1}u}_{L^{3,\frac32}(\R^3)}\leq C\norm{|x|^{-\tau_j-1}\alpha\cdot\nabla u}_{L^{\frac32}(\R^3)}}
 \label{eq:Carleman}
\end{equation}
for an appropriate sequence $\tau_j\to\ii$ and a universal constant $C$. We first discuss the proof of~\eqref{eq:Carleman} which we deduce from the similar inequality in~\cite[Eq.~(2.1)]{Seo-15}. By duality and density of $C^\ii_c(\R^3\setminus\{0\},\C^4)$ in $L^{\frac32,3}(\R^3)$ we can rephrase~\eqref{eq:Carleman} in the manner
$$\left|\int_{\R^3}\frac{g(x)^*\, u(x)}{|x|^{\tau_j+1}}\,dx\right|
\leq C\norm{g}_{L^{\frac32,3}(\R^3)}\norm{|x|^{-\tau_j-1}\alpha\cdot\nabla u}_{L^{\frac32}(\R^3)} $$
for all $u,g\in C^\ii_c(\R^3\setminus\{0\},\C^4)$. Here $g(x)^*$ denotes the transposition and complex conjugation of the vector $g(x)\in\C^4$. Letting $f=-i|x|^{-\tau_j-1}\alpha\cdot \nabla\, u$ which also belongs to $C^\ii_c(\R^3\setminus\{0\})$, this reduces the problem to showing that 
\begin{multline}
\left|\pscal{|x|^{-\tau_j-1}g,(\alpha\cdot p)^{-1}|x|^{\tau_j+1}f}_{L^2}\right|\leq C\norm{g}_{L^{\frac32,3}(\R^3)}\norm{f}_{L^{\frac32}(\R^3)},\\
\forall f,g\in C^\ii_c(\R^3\setminus\{0\}).
 \label{eq:Carleman2}
\end{multline}
Note that the two functions $|x|^{-\tau_j-1}g$ and $|x|^{\tau_j+1}f$ belong to $C^\ii_c(\R^3)$ and that $\1_B(\alpha\cdot p)^{-1}\1_B$ is a bounded operator on $L^2$ for any ball $B$, hence the left side of~\eqref{eq:Carleman2} makes sense for every $f,g\in C^\ii_c(\R^3\setminus\{0\})$. We estimate the kernel pointwise and obtain 
\begin{align*}
&\left|\pscal{|x|^{-\tau_j-1}g,(\alpha\cdot p)^{-1}|x|^{\tau_j+1}f}_{L^2}\right|\\
&\qquad=\frac{1}{4\pi}\left|\iint_{\R^3\times\R^3} \frac{g(x)^*\,\alpha\cdot(x-y)f(y)}{|x|^{\tau_j+1}|x-y|^3}|y|^{\tau_j+1}\,dx\,dy\right|\\
&\qquad\leq \frac{1}{4\pi}\iint_{\R^3\times\R^3} \frac{|g(x)|\,|f(y)|}{|x|^{\tau_j+1}|x-y|^2}|y|^{\tau_j+1}\,dx\,dy\\
&\qquad=\pscal{|x|^{-\tau_j-1}|g|,(-\Delta)^{-\frac12}|x|^{\tau_j+1}|f|}_{L^2(\R^3)}.
\end{align*}
The right side was studied in~\cite{Seo-15} where it is shown that 
$$\left|\pscal{|x|^{-\tau_j-1}g,(-\Delta)^{-\frac12}|x|^{\tau_j+1}f}_{L^2(\R^3)}\right|\leq C\norm{g}_{L^{\frac32,3}(\R^3)}\norm{f}_{L^{\frac32}(\R^3)}.$$
This concludes the proof of~\eqref{eq:Carleman2}. 

The way to deduce the result using~\eqref{eq:Carleman} is classical and works as in~\cite{AmrBerGeo-81,JerKen-85,Seo-15}. We quickly outline the argument for the convenience of the reader. Let $\chi(x):=\max(0,\min(2-|x|,1))$ which localizes in a neighborhood of the ball $B_1$ and set as usual $\chi_r(x):=\chi(x/r)$. Let also $\eta(x):=\max(0,\min(2,|x|-1))$ which localizes outside of the ball $B_1$ and $\eta_k(x):=\eta(2^kx)$. We consider $\Psi$ as in the statement and assume, after an appropriate space translation, that $\Psi$ vanishes to all orders at the origin. Using~\eqref{eq:Carleman} we estimate 
\begin{align*}
&\norm{|x|^{-\tau_j-1}\eta_k\chi_r \Psi}_{L^{3,\frac32}(\R^3)}\\
&\qquad\leq C\norm{|x|^{-\tau_j-1}\alpha\cdot\nabla(\eta_k\chi_r \Psi)}_{L^{\frac32}(\R^3)}\\
&\qquad\leq C\norm{|x|^{-\tau_j-1}\eta_k\chi_r V\Psi}_{L^{\frac32}(\R^3)}+C\norm{|x|^{-\tau_j-1}|\nabla\chi_r| \Psi}_{L^{\frac32}(B_{2r}\setminus B _r)}\\
&\qquad\qquad +C\norm{|x|^{-\tau_j-1}\chi_r|\nabla\eta_k| \Psi}_{L^{\frac32}(B_{2r}\setminus B _r)}\\
&\qquad\leq C'\norm{V}_{L^{3,\ii}(B_{2r})}\norm{|x|^{-\tau_j-1}\eta_k\chi_r\Psi}_{L^{3,\frac32}(\R^3)}+\frac{C\norm{\nabla\chi}_{L^3}}{r^{\tau_j+1}}\norm{\Psi}_{L^{3}(B_{2r}\setminus B_r)}\\
&\qquad\qquad +C2^{k(\tau_j+2)}\norm{\Psi}_{L^{\frac32}(B_{2^{1-k}})},
\end{align*}
where $C'$ equals $C$ multiplied by the constant in Hölder's inequality for Lorentz spaces. 
We then let $\eps_0:=1/(2C')$ in order to be able to put the first term on the left side and invert it. Namely, the condition is that 
$$\limsup_{r\to0}\norm{V}_{L^{3,\ii}(B_{2r})}\leq \frac1{2C'}.$$
Choosing $r$ small enough (depending only on $V$ and on the considered origin but not on $\Psi$ and $\tau_j$) we obtain
\begin{align}
\norm{\left(\frac{r}{|x|}\right)^{\tau_j+1}\eta_k\Psi}_{L^{3,\frac32}(B_r)}&\leq \norm{\left(\frac{r}{|x|}\right)^{\tau_j+1}\eta_k\chi_r \Psi}_{L^{3,\frac32}(\R^3)}\nn\\
&\leq \frac{C\norm{\nabla\chi}_{L^3}}{1-C'\norm{V}_{L^{3,\ii}(B_{2r})}}\norm{\Psi}_{L^{3}(B_{2r}\setminus B_r)}\nn\\
&\qquad +\frac{C2^{k(\tau_j+2)}\norm{\Psi}_{L^{\frac32}(B_{2^{1-k}})}}{1-C'\norm{V}_{L^{3,\ii}(B_{2r})}}.\label{eq:estim_tronquee}
\end{align}
We have 
$$2^{k(\tau_j+2)}\norm{\Psi}_{L^{\frac32}(B_{2^{1-k}})}\leq\left(2^{2k(\tau_j+2)+1-k}\int_{B_{2^{1-k}}}|\Psi|^2\right)^{\frac12}$$
which tends to 0 when $k\to0$, since $\Psi$ vanishes to all orders at the origin by assumption. Passing to the limit $k\to\ii$ in~\eqref{eq:estim_tronquee} we find
$$\norm{\left(\frac{r}{|x|}\right)^{\tau_j+1}\Psi}_{L^{3,\frac32}(B_r)}\leq \frac{C\norm{\nabla\chi}_{L^3}}{1-C'\norm{V}_{L^{3,\ii}(B_{2r})}}\norm{\Psi}_{L^{3}(B_{2r}\setminus B_r)}.$$
Taking then $\tau_j\to\ii$ gives $\Psi\equiv0$ on $B_r$. Iterating the argument gives $\Psi\equiv0$ on the whole connected domain $\Omega$. 
\end{proof}

Theorem~\ref{thm:unique_continuation} has the following immediate consequence.

\begin{cor}[Unique continuation for Dirac operators with general charge distributions]\label{cor:unique_continuation_general_charge}
Let $\mu$ be any finite signed Borel measure on $\R^3$, such that 
$$|\mu(\{R\})|<1\qquad \text{for all $R\in\R^3$.}$$ 
Then the eigenfunctions of the self-adjoint operator
$$D_0-\mu\ast\frac{1}{|x|}$$
defined in~\cite[Thm.~1]{EstLewSer-20a_ppt} can never vanish on a set of positive measure in $\R^3$. They can also not vanish to all orders at a point $x_1$ such that $\mu(\{x_1\})\leq \min(1/4,\eps_0/4)$ where $\eps_0$ is the constant from Theorem~\ref{thm:unique_continuation}.
\end{cor}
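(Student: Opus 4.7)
The strategy is to reduce both claims to Theorem~\ref{thm:unique_continuation} applied on the connected open set
$\Omega:=\R^3\setminus\{R_1,\ldots,R_K\}$,
where the $R_j$ are the finitely many points at which $|\mu(\{R_j\})|\geq\min(1/4,\eps_0/4)$. The set is finite because $\mu$ is a finite measure, and $\Omega$ is connected since one only removes finitely many points from $\R^3$. As $\min(1/4,\eps_0/4)<1/2$, the inclusion~\eqref{eq:domain_in_H1_loc} gives $\Psi\in H^1_{\rm loc}(\Omega,\C^4)$, so by Sobolev embedding on balls $\Psi\in L^{3,3/2}_{\rm loc}(\Omega)$ and $\nabla\Psi\in L^{3/2}_{\rm loc}(\Omega)$ as required by the theorem. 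The eigenvalue equation $(D_0-V_\mu)\Psi=\lambda\Psi$ with $\lambda\in(-1,1)$ yields the pointwise inequality $|\alpha\cdot\nabla\Psi|\leq(V_\mu+2)|\Psi|$, so the theorem will be applied with $V:=V_\mu+2$.

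The core step is to verify the local smallness $\limsup_{r\to 0}\|V\|_{L^{3,\ii}(B_r(x_0))}\leq\eps_0$ at every $x_0\in\Omega$. The key elementary fact is the scale-invariance $\||y|^{-1}\|_{L^{3,\ii}(B_r)}=(4\pi/3)^{1/3}$, valid for every $r>0$. Decomposing $\mu=\mu(\{x_0\})\delta_{x_0}+\mu'$ with $\mu'(\{x_0\})=0$, the atomic contribution is at most $(4\pi/3)^{1/3}|\mu(\{x_0\})|\leq(4\pi/3)^{1/3}\eps_0/4\leq\eps_0/2$ for $x_0\in\Omega$, since $(4\pi/3)^{1/3}<2$. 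For the atom-free part $V_{\mu'}$, a further splitting $\mu'=\mu'|_{B_R(x_0)}+\mu'|_{\R^3\setminus B_R(x_0)}$ allows to bound the far piece uniformly by $2|\mu|(\R^3)/R$ on $B_{R/2}(x_0)$ (contributing $O(r)$ in $L^{3,\ii}(B_r)$) and the near piece by $C|\mu'|(B_R(x_0))$ in $L^{3,\ii}(\R^3)$, via Young's inequality for weak-type spaces $\||y|^{-1}*\nu\|_{L^{3,\ii}}\leq C|\nu|(\R^3)$. Letting $r\to 0$ first and then $R\to 0$, and using $\mu'(\{x_0\})=0$, one obtains $\limsup_{r\to 0}\|V_{\mu'}\|_{L^{3,\ii}(B_r(x_0))}=0$, while the constant $2$ contributes only $O(r)$.

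With the hypotheses verified, the vanishing-to-all-orders claim is immediate: if $\Psi$ vanishes to all orders at some $x_1\in\Omega$, Theorem~\ref{thm:unique_continuation} forces $\Psi\equiv 0$ on the connected set $\Omega$, hence in $L^2(\R^3)$ (the removed set being null and $\Psi\in H^{1/2}$), contradicting the fact that $\Psi$ is a nonzero eigenfunction.

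For the claim about sets of positive Lebesgue measure, suppose $\Psi$ vanishes on a Borel set $E$ with $|E|>0$. Since $\Omega$ has full Lebesgue measure, $E\cap\Omega$ has positive measure and admits a Lebesgue density-$1$ point $x_0\in\Omega$. The main remaining obstacle is to upgrade this density condition to vanishing of $\Psi$ to all orders at $x_0$. I would handle this exactly as in Step~4 of the proof of Theorem~\ref{thm:existence_optimal_measure}: the vanishing on $E$, combined with H\"older's inequality and the Sobolev embedding $H^1\hookrightarrow L^6$ applied to a cut-off $\chi_r\Psi$ supported in $B_{2r}(x_0)$, yields a self-improving bound of the form
$\int_{B_r(x_0)}|\Psi|^2\leq C\Big(\tfrac{|B_{2r}(x_0)\setminus E|}{|B_{2r}(x_0)|}\Big)^{2/3}\int_{B_{2r}(x_0)}\big(r^{-2}|\Psi|^2+|\nabla\Psi|^2\big).$
Since the prefactor tends to zero, the geometric iteration of~\cite[Section~3]{FigGos-92} forces $\int_{B_r(x_0)}|\Psi|^2=o(r^\alpha)$ for every $\alpha>0$, i.e.\ $\Psi$ vanishes to all orders at $x_0$. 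The first claim is then reduced to the second, completing the proof.
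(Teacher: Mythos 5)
Your reduction to Theorem~\ref{thm:unique_continuation} on $\Omega=\R^3\setminus\{R_1,\dots,R_K\}$ and your verification of the local smallness of $V$ are correct; the decomposition $\mu=\mu(\{x_0\})\delta_{x_0}+\mu'$ followed by a near/far splitting of $\mu'$ is slightly different from the paper's (which splits $\mu=\mu\1_{B_\delta(x_0)}+\mu\1_{\R^3\setminus B_\delta(x_0)}$ and simply invokes $\lim_{\delta\to0}|\mu|(B_\delta(x_0))=|\mu|(\{x_0\})$), but both yield $\limsup_{r\to0}\norm{V}_{L^{3,\ii}(B_r(x_0))}\leq\eps_0$ and are equally valid. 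The vanishing-to-all-orders claim is then handled just as in the paper.

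For the positive-measure claim, however, there is a genuine gap. You write that you would proceed ``exactly as in Step~4 of the proof of Theorem~\ref{thm:existence_optimal_measure}'' to obtain a self-improving bound of the form
\begin{equation*}
\int_{B_r}|\Psi|^2\leq C\left(\frac{|B_{2r}\setminus E|}{|B_{2r}|}\right)^{2/3}\int_{B_{2r}}\Big(r^{-2}|\Psi|^2+|\nabla\Psi|^2\Big)
\end{equation*}
and then iterate à la~\cite{FigGos-92}. But this inequality does not iterate on its own: the right side involves $\nabla\Psi$, so to close the loop one must bound $r^2\int_{B_{2r}}|\nabla\Psi|^2$ by $\int_{B_{4r}}|\Psi|^2$ (a Caccioppoli-type estimate). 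In Step~4 of Theorem~\ref{thm:existence_optimal_measure} this closure is available \emph{for the auxiliary function $U$} because $U\geq0$, $\Delta U=4\pi|\Psi|^2\geq0$ and $U$ vanishes on $K$, which together give the energy inequality $\int|\nabla(\chi_r U)|^2\leq\int U^2|\nabla\chi_r|^2$; none of this transfers to the spinor $\Psi$ itself. The paper's proof of the Corollary therefore uses an entirely different device to close the loop: it invokes the Dirac eigenvalue equation together with the Rellich--Kato bound $\norm{D_0(D_0-V_{\mu\1_{B_\delta}})^{-1}}\leq C$ to pass from $\norm{\Psi}_{L^6(B_r)}$ to $\norm{(D_0-V_{\mu\1_{B_\delta}})\chi_r\Psi}_{L^2}$ and then back to $\norm{\Psi}_{L^2(B_{2r})}$, arriving at the genuinely iterable bound $\norm{\Psi}_{L^2(B_r)}\leq 2C\eps(r)^{1/3}(1+|\mu|(\R^3))\norm{\Psi}_{L^2(B_{2r})}$. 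Without this (or an equivalent Caccioppoli estimate derived from the Dirac equation), your iteration does not go through, so the reduction of the positive-measure claim to vanishing to all orders is incomplete as written. A secondary, cosmetic point: your displayed inequality is dimensionally inconsistent; with the normalised factor $(|B_{2r}\setminus E|/|B_{2r}|)^{2/3}$ the integrand on the right should read $|\Psi|^2+r^2|\nabla\Psi|^2$, not $r^{-2}|\Psi|^2+|\nabla\Psi|^2$.
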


\begin{proof}[Proof of Corollary~\ref{cor:unique_continuation_general_charge}]
Let us denote by $R_1,...,R_K$ all the points for which $|\mu(\{R_j\})|\geq \min(1/4,\eps_0/4)$ where $\eps_0$ is the universal constant from Theorem~\ref{thm:unique_continuation} and set $\Omega:=\R^3\setminus\{R_1,...,R_K\}$. Then, by~\eqref{eq:domain_in_H1_loc} we know that an eigenfunction $\Psi$ is necessarily in $H^1_{\rm loc}(\Omega)$ so that $\nabla \Psi\in L^2_{\rm loc}(\Omega)$ and $\Psi\in L^6_{\rm loc}(\Omega)\subset L_{\rm loc}^{3,\frac32}(\Omega)$. In addition, with $V_\mu:=\mu\ast|x|^{-1}$ we can write  
\begin{equation}
|V_\mu|\leq |V_{\mu\1_{B_{\delta}(x_0)}}|+\frac{|\mu|(\R^3)}{\delta-r}\qquad\text{a.e.~in the ball $B_r(x_0)$.}
 \label{eq:decomp_weak_L3}
\end{equation}
The first potential satisfies\footnote{In our convention $\norm{|x|^{-1}}_{L^{3,\ii}}=1$.}
$$\norm{V_{\mu\1_{B_{\delta}(x_0)}}}_{L^{3,\ii}(B_r(x_0))}\leq |\mu|\big(B_{\delta}(x_0)\big)$$
which is less than $\eps_0/2$ for $\delta$ small enough (depending on $x_0$). For this fixed $\delta$ the last term in~\eqref{eq:decomp_weak_L3} is in $L^\ii$ and hence converge to 0 in $L^{3,\ii}(B_r(x_0))$ when $r\to0$. The same applies to the mass term $\beta$ in the Dirac operator $D_0=\alpha\cdot (-i\nabla)+\beta$. Thus
$$\limsup_{r\to0}\norm{-V_{\mu}+\beta}_{L^{3,\ii}(B_r(x_0))}\leq \frac{\eps_0}2$$
for every $x_0\in\Omega$. We are therefore exactly in the setting of Theorem~\ref{thm:unique_continuation}. If $\Psi$ vanishes to all orders at a point in $\Omega$, we deduce immediately that $\Psi\equiv0$. 

In case that $\Psi$ only vanishes on a set $A$ of positive measure, we argue like in~\cite{FigGos-92} and in the proof of Theorem~\ref{thm:existence_optimal_measure}, to deduce the existence of a point in $\Omega$ where $\Psi$ vanishes to all orders. We quickly outline the argument for the convenience of the reader. We pick a point $x_1$ of density of $\Omega\cap A$ and assume again that $x_1=0$ without loss of generality. Denote
$$\eps(r):=\frac{|B_r\setminus K|}{|B_r|}$$
which tends to 0 when $r\to0$. Let $\delta\leq1$ be so that $\mu(B_\delta)\leq 3/8$. Let $\chi_r$ be the same function as in the proof of Theorem~\ref{thm:unique_continuation}, which localizes around the origin. Then we have for $r\leq \delta/4$ 
\begin{align*}
\norm{\Psi}_{L^2(B_r)}=\norm{\Psi}_{L^2(B_r\setminus A)}&\leq |B_r\setminus A|^{\frac{1}{3}}\norm{\Psi}_{L^6(B_r\setminus A)}\\
&\leq |B_1|^{\frac13}r\eps(r)^{\frac13}\norm{\chi_r\Psi}_{L^6(\R^3)}\\
&\leq C_{\rm Sob}|B_1|^{\frac13}r\eps(r)^{\frac13}\norm{D_0\chi_r\Psi}_{L^2(\R^3)}\\
&\leq Cr\eps(r)^{\frac13}\norm{(D_0-V_{\mu\1_{B_{\delta}}})\chi_r\Psi}_{L^2}.
\end{align*}
Here we have used the Sobolev inequality and, in the last estimate, the fact that 
$$\norm{D_0\left(D_0-V_{\mu\1_{B_{\delta}}}\right)^{-1}}\leq C$$
for some universal constant $C$ for $\mu(B_{\delta})\leq 3/8$, by the Rellich-Kato theorem.  Using then the eigenvalue equation for $\Psi$, we obtain
\begin{align*}
\norm{(D_0-V_{\mu\1_{B_{\delta}}})\chi_r\Psi}_{L^2}&\leq  |\lambda|\norm{\chi_r\Psi}_{L^2}+\norm{V_{\mu\1_{\R^3\setminus B_{\delta}}}\chi_r\Psi}_{L^2}+\norm{|\nabla\chi_r|\Psi}_{L^2}\\
&\leq  \left(1+\frac{2|\mu|(\R^3)}{\delta}+\frac{1}{r}\right)\norm{\Psi}_{L^2(B_{2r})}.
\end{align*}
This gives
\begin{equation*}
\norm{\Psi}_{L^2(B_r)}\leq 2C\eps(r)^{\frac13}\left(1+|\mu|(\R^3)\right)\norm{\Psi}_{L^2(B_{2r})}
\end{equation*}
for a universal constant $C$. The rest of the argument goes like in~\cite[Section~3]{FigGos-92}.
\end{proof}


\end{document}